\theoremstyle{plain}
\newtheorem{theorem}[equation]{Theorem}
\newtheorem{lemma}[equation]{Lemma}
\newtheorem{proposition}[equation]{Proposition}
\newtheorem{thmx}{Theorem}
\theoremstyle{definition}
\newtheorem{example}[equation]{Example}
\theoremstyle{remark}
\newtheorem{remark}[equation]{Remark}
\numberwithin{equation}{section}
\renewcommand{\emptyset}{\mbox{\textup{\O}}}
\newcommand\rh{\mathcal{H}}
\newcommand{\hd}{\mathcal{H}-\mbox{dim}\;}
\newcommand{\m}[1]{\mathcal{#1}}
\newcommand\rd{\mathrm{d}}
\newcommand{\dr}[1]{\mathrm{#1}}
\font\ursymbol=psyr at 10pt 
\def\urpartial{\mbox{\ursymbol\char"B6}}
\DeclareMathOperator*{\esssup}{ess\,sup}
\DeclareMathOperator*{\essinf}{ess\,inf}
\def\XXint#1#2#3{{\setbox0=\hbox{$#1{#2#3}{\int}$ }
\vcenter{\hbox{$#2#3$ }}\kern-.55\wd0}}
 \def\doublespaced{\baselineskip=\normalbaselineskip
    \multiply\baselineskip by 2}
\def\doublespace{\doublespaced}
\newcommand{\rn}[1]{\mathbb{R}^{#1}}
\newcommand{\ep}{\epsilon}
\newcommand{\es}{\emptyset}
\newcommand{\al}{\alpha}
\newcommand{\ga}{\gamma}
\newcommand{\de}{\delta}
\newcommand{\ds}{\displaystyle}
\newcommand{\rar}{\mbox{$\rightarrow$}}
\newcommand{\Ga}{\Gamma}
\newcommand{\la}{\lambda}
\newcommand{\La}{\Lambda}
\newcommand{\ar}{\partial}
\newcommand{\si}{\sigma}
\newcommand{\Om}{\Omega}
\newcommand{\be}{\beta}
\newcommand{\ph}{\phi}
\newcommand{\he}{\theta}
\newcommand{\Ph}{\Phi}
\newcommand{\sem}{\setminus}
\newcommand{\ze}{\zeta}
\newcommand{\ti}{\tilde}
\renewcommand*{\backref}[1]{}
\renewcommand*{\backrefalt}[4]{%
   \ifcase #1 (Not cited.)%
    \or        (Cited on page~#2.)%
    \else      (Cited on pages~#2.)%
    \fi}
\begin{document}

\allowdisplaybreaks

\title[$\sigma$-finiteness of elliptic measures]{$\sigma$-finiteness of elliptic measures for quasilinear elliptic PDE in space}


\address{Murat Akman\\
Instituto de Ciencias Matem\'aticas CSIC-UAM-UC3M-UCM\\
Consejo Superior de Investigaciones Cient{\'\i}ficas\\
C/ Nicol\'as Cabrera, 13-15\\
E-28049 Madrid, Spain} \email{murat.akman@icmat.es}


\address{John Lewis\\
Mathematics Department, University of Kentucky, Lexington, Kentucky, 40506
} \email{johnl@uky.edu}


\address{Andrew Vogel\\
Department of Mathematics, Syracuse University, Syracuse, New York 13244
} \email{alvogel@syracuse.edu}

\author{Murat Akman \and John Lewis \and Andrew Vogel}


\subjclass[2010]{35J25, 35J70, 37F35, 28A78}
\keywords{Hausdorff Dimension of a Borel measure, Hausdorff measure, Hausdorff dimension, The four-corner Cantor set, Quasilinear Elliptic PDEs}

\begin{abstract}
In this paper we study the Hausdorff dimension of a elliptic measure $\mu_{f}$ in space associated to a positive weak solution to a certain quasilinear elliptic PDE in an open subset and vanishing on a portion of the boundary of that open set. We show that this measure is concentrated on a set of $\sigma-$finite $n-1$ dimensional Hausdorff measure for $p>n$ and the same result holds for $p=n$ with an assumption on the boundary. 

We also construct an example of a domain in space for which the corresponding measure has Hausdorff dimension $\leq n-1-\delta$ for $p\geq n$ for some $\delta$ which depends on various constants including $p$. 

The first result generalizes the authors previous work in \cite{ALV13} when the PDE is the $p-$Laplacian and the second result generalizes the well known theorem of Wolff in \cite{W95} when $p=2$ and $n=2$.
\end{abstract}

\maketitle
\tableofcontents

\section{Introduction}
\label{intro}
In this paper we continue our study of the Hausdorff dimension of a measure associated with a certain positive weak solution, $u \geq 0,$ to a PDE of $ p $ Laplace type. To introduce the PDE and the measure, we fix $p$, $1<p<\infty,$ and let $f:\mathbb{R}^{n}\sem\{0\}\to (0,\infty)$ be a real valued function with the following properties,
\begin{align}
\label{prooff}
 \begin{array}{l}
a)\;f\; \mbox{is homogeneous of degree $p$}\;  \mbox{on}\; \mathbb{R}^{n}\setminus\{0\}.\\
\hspace{.8cm} \mbox{That is,}\; f(\eta)=|\eta|^{p}f\left(\frac{\eta}{|\eta|}\right)>0\; \mathrm{when}\; \eta\in\mathbb{R}^{n}\setminus\{0\}.\\ \\
b)\;   \m{D}f =(f_{\eta_{1}},\ldots, f_{\eta_{n}}) \; \mbox{has continuous partial derivatives when}\; \eta \neq 0. \\ \\
c)\;  f\; \mbox{is uniformly convex on}\; B(0,1)\setminus \bar{B}(0,1/2). \\
\hspace{.8cm}  \mbox{That is, there exists} \;  c_*\geq 1\; \mbox{such that for} \;  \eta\in\mathbb{R}^{n}, 1/2< |\eta|<1, \\
\hspace{.8cm}  \mbox{and all}\; \xi\in\mathbb{R}^{n}\; \mbox{we have}\; c_*^{-1} |\xi|^{2}\leq \sum\limits_{j,k=1}^{n} \frac{\partial^{2} f}{\partial \eta_{j}\eta_{k}}(\eta)\xi_{j}\xi_{k}\leq c_* |\xi|^{2}.\\
  \end{array}
\end{align}
Put $f(0)=0.$ We next give examples of such $f$.
\begin{example} 
From $a)$ in (\ref{prooff}) it follows that $ f(\eta)=\kappa (\eta)|\eta |^p$ when $\eta\in\rn{n}\sem\{0\}$, where $\kappa$ is homogeneous of degree 0. Using this fact one can show  that if $ \ep  $ is sufficiently small, then $f(\eta)=|\eta|^{p} (1+\epsilon\eta_{1}/|\eta|)$ satisfies (\ref{prooff}). Such an $f$ is not invariant under rotations.
\end{example}
From homogeneity of $f$ and Euler's formula we have for a.e $\eta\in\mathbb{R}^{n}$ that
\begin{align}
\label{fisdegreep}
\langle \m{D}f(\eta), \eta \rangle =p f(\eta) \; \mbox{and} \; \eta \, (\m{D}^{2} f(\eta)) =(p-1)\m{D}f(\eta)
\end{align}
where $\m{D}^{2} f(\eta)=(f_{\eta_{j}\eta_{k}})$ is an $n$ by $n$ matrix and $\eta$, $\m{D}f(\eta)$ are regarded as $1\times n$ row matrices.

Let $O$ be an open set in $\mathbb{R}^{n}$ and $\hat{z}\in\partial O$. Let $u$ be a positive weak solution in $ O\cap B(\hat{z}, \rho)$ to the Euler-Lagrange equation
\begin{align}
\label{flaplace}
\Delta_{f} u:=\nabla\cdot\m{D}f(\nabla u)=\sum\limits_{j,k=1}^{n}f_{\eta_{j}\eta_{k}}(\nabla u)u_{x_{k}x_{j}}=0
\end{align}
in $O\cap B(\hat{z},\rho)$. That is, $u\in W^{1,p}(O\cap B(\hat{z}, \rho))$ and
\[
\int\langle \m{D} f(\nabla u), \nabla\theta\rangle \rd x=0\, \, \, \mbox{whenever}\, \, \, \theta\in W^{1,p}_{0}(O\cap B(\hat{z},\rho))
\] 
where $ \nabla \theta (x)=(\frac{ \ar \theta }{\ar x_1}, \dots, \frac{ \ar \theta }{\ar x_n} ) (x) $ whenever these partials exist in the distributional sense. We assume also that $u$ has continuous zero boundary values on $\partial O\cap B(\hat{z},\rho)$. We continuously extend $u$ (denoted with $u$ also) to all $B(\hat{z},\rho)$ by setting $u\equiv 0$ in $B(\hat{z},\rho)\setminus O$. It is well known from \cite[Theorem 21.2]{HKM06} that there exists a positive locally finite Borel measure $\mu_{f}$ on $ \rn{n} $ associated with $u$. We call this measure as \textit{elliptic measure} associated with a positive weak solution of \eqref{flaplace}.  This measure has support contained in $\partial O\cap B(\hat{z},\rho)$  with the property that
\begin{align}
\label{ast}
\begin{split}
\int\langle\m{D} f(\nabla u), \nabla\phi\rangle \dr{d}x=-\int\phi\, \dr{d}\mu_{f}\; \mbox{whenever}\; \phi\in C_{0}^{\infty}(B(\hat{z},\rho)).
\end{split}
\end{align}
Existence of $\mu_{f}$ follows from the maximum principle, basic Caccioppoli inequalities for $u$ and the Riesz representation theorem for positive linear functional. Note that if $\partial O$ and $f$ are smooth enough then from an integration by parts in (\ref{ast}) and homogeneity in (\ref{fisdegreep}) we deduce that
\[
\dr{d}\mu_{f}=p\frac{f(\nabla u)}{|\nabla u|}\dr{d} \rh^{n-1}|_{\partial O\cap B(\hat{z},\rho)}.
\]
We next introduce the notion of the \textit{Hausdorff dimension of a measure}. To this end, let $\lambda$ be a real valued, positive, and increasing function on $(0,\infty)$ with $\lim\limits_{r\to 0}\lambda(r)=0$. For fixed $0<\delta$ and $E\subset \mathbb{R}^{n} $, we define $(\de, \la)-$\textit{Hausdorff content} of $E$ in the usual way;
\begin{align}
\label{hausdorffcontentdefn}
 \rh_{ \delta }^\lambda(E):= \inf\left\{\sum\limits_{i} \lambda(r_i)\; \mbox{where}\; E\subset \bigcup B(z_{i}, r_{i}),\; 0<r_{i}<\delta,\; x_{i}\in\mathbb{R}^{n}\right\}.
 \end{align}
Then the \textit{Hausdorff measure} of $E$ is defined by
\[
\rh^\lambda(E):= \lim\limits_{\delta \to 0 } \, \,  \rh_{ \delta }^\lambda(E).
 \]
In case $ \lambda(r) = r^\alpha $ we write $ \rh^\alpha $ for $ \rh^\lambda$. The Hausdorff dimension of $\mu_{f}$, denoted by $\hd{\mu_{f}}$, is defined by

\[
\hd{\mu_{f}}:=\inf\left\{\alpha:\; \exists\, \mbox{Borel set}\; E\subset\partial O\; \mbox{with}\; \rh^{\alpha}(E)=0\; \mbox{and}\; \mu_{f}(\mathbb{R}^{n}\setminus E)=0\right\}.
\] 
Recall that $\mu$ is said to be \textit{absolutely continuous} with respect to $\nu$ (if $\mu, \nu, $ are positive Borel measures) provided that $\mu(E)=0$ whenever $E$ is a Borel set with $\nu(E)=0$. Following standard notation, we write $\mu\ll\nu$. A set $E$ is said to have \textit{$\sigma-$finite} $\nu$ measure if
\[
E=\bigcup\limits_{i=1}^{\infty} E_{i}\; \mbox{with}\; \nu(E_{i})<\infty\; \mbox{for} \;i=1,\ldots,\infty.
\]
We note that if  $f(\eta)=|\eta|^{2}$, then the Euler-Lagrange equation in (\ref{flaplace}) is the usual \textit{Laplace equation}. In this case, if $u$ is the \textit{Green's function} for Laplace's equation with pole at some $z_{0}\in\Omega$, then the measure corresponding to this harmonic function $u$ as in (\ref{ast}) is \textit{harmonic measure} relative to $z_{0}$ and will be denoted by $\omega$.

The Hausdorff dimension of $\omega$ has been extensively studied in the last thirty five years in planar domains. In particular, in \cite{C85}, Carleson proved that
$\hd{\omega}=1$ when $\partial\Omega$ is a snowflake and $\hd{\omega}\leq 1$ for any self similar Cantor set. In \cite{M85}, Makarov proved that
\begin{thmx}[Makarov]
\label{makarov}
Let $\Omega$ be a simply connected domain in the plane and let $ \lambda(r):=r\, \mathrm{exp}\{A\sqrt{\log\frac{1}{r} \log\log\log\frac{1}{r}}\}$. Then
\begin{itemize}
\item[a)] $\omega$ is concentrated on a set of $\sigma-$finite $\rh^{1}$ measure,
\item[b)] $\omega\ll\rh^{\lambda}$ provided that $A$ is large enough.
\end{itemize}
\end{thmx}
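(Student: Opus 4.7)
The plan is to use the conformal invariance of harmonic measure in two dimensions. Fix a Riemann map $\varphi : \mathbb{D} \to \Omega$ with $\varphi(0)=z_0$. By Fatou's theorem $\varphi$ has a.e.\ non-tangential boundary values on $\partial\mathbb{D}$, and $\omega$ with pole $z_0$ is the push-forward of normalized Lebesgue measure $d\theta/(2\pi)$ under the boundary map $\theta \mapsto \varphi(e^{i\theta})$. Both parts of the theorem thus reduce to statements about the radial distortion $|\varphi'(re^{i\theta})|$ as $r \to 1^-$, together with the Koebe estimate that the arc $I \subset \partial\mathbb{D}$ of length $\ell$ centered at $e^{i\theta_0}$ has $\diam\varphi(I) \asymp |\varphi'((1-\ell)e^{i\theta_0})|\,\ell$.

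The single technical tool driving both conclusions is the law of the iterated logarithm for Bloch functions applied to $g:=\log\varphi'$. Because $\varphi$ is univalent, the Koebe distortion theorem yields $\|g\|_{\mathcal{B}}:=\sup_{z\in\mathbb{D}}(1-|z|^2)|g'(z)|\leq 6$, and Makarov's Bloch-LIL then asserts
\[
\limsup_{r\to 1^-} \frac{|\operatorname{Re} g(re^{i\theta})|}{\sqrt{\log\frac{1}{1-r}\,\log\log\log\frac{1}{1-r}}} \leq C\,\|g\|_{\mathcal{B}}\qquad \text{for a.e.\ }\theta.
\]
This gives $|\varphi'(re^{i\theta})|\leq \exp\bigl\{A\sqrt{\log\tfrac{1}{1-r}\log\log\log\tfrac{1}{1-r}}\bigr\}$ for a.e.\ $\theta$ once $A$ is large, for $r$ sufficiently close to $1$ (depending on $\theta$).

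Part (b) then follows directly: on the exceptional-free set, combining this pointwise bound with the Koebe diameter estimate gives $\diam\varphi(I) \lesssim \lambda(\ell)$ for every sufficiently small arc $I$ of length $\ell$ centered at $e^{i\theta}$, and a Borel-Cantelli bookkeeping over dyadic scales (coupled with the push-forward description of $\omega$) converts this uniform control into $\omega \ll \rh^{\lambda}$. Part (a) follows from the weaker consequence that, for a.e.\ $\theta$, $|\varphi'(re^{i\theta})|$ stays bounded along some sequence $r_k\to 1^-$: setting $E_k=\{e^{i\theta} : \sup_{r<1}|\varphi'(re^{i\theta})|\leq k\}$ and covering $\varphi(E_k)$ by images of Carleson arcs, Koebe distortion yields $\rh^1(\varphi(E_k)) \lesssim k$, while the Littlewood-Paley area integral $\int_0^1(1-r)|g'|^2\,dr<\infty$ a.e.\ (an even weaker Bloch consequence) ensures that $\partial\mathbb{D}\setminus\bigcup_k E_k$ has arclength zero; so $\omega$ is concentrated on the $\sigma$-finite set $\bigcup_k \varphi(E_k)$.

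The principal obstacle is the Bloch LIL itself. Following Makarov, the cleanest route is to decompose $g(re^{i\theta})$ along a dyadic hyperbolic partition of $\mathbb{D}$ into a sum of nearly independent, uniformly bounded increments, producing a dyadic martingale to which the classical Kolmogorov-Hartman-Wintner law of the iterated logarithm applies. Constructing this dyadic decomposition and controlling the martingale differences in $L^{\infty}$ is where essentially all of the technical weight of Makarov's proof sits; everything else in the outline above is classical.
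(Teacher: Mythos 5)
The paper does not actually prove Theorem \ref{makarov}: it is quoted from Makarov's paper \cite{M85} as background, so the only comparison available is with the classical argument, and your outline does follow that classical route (Riemann map $\varphi$, $\omega$ as push-forward of normalized arclength, the Bloch bound $\|\log \varphi'\|_{\mathcal{B}}\le 6$ from Koebe, and Makarov's law of the iterated logarithm for Bloch functions proved via dyadic martingales). The architecture is right, but two steps, as written, would fail.

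For part (b): $\omega\ll\rh^{\lambda}$ is a \emph{no-compression} statement, and the input it needs is the lower half of the LIL, namely $|\varphi'(z_I)|\ge \exp\bigl\{-A\sqrt{\log\tfrac{1}{\ell}\log\log\log\tfrac{1}{\ell}}\bigr\}$ at the top point $z_I$ of an arc $I$ of length $\ell$, combined with the one-sided Koebe/Beurling estimate $\diam \varphi^*(I)\ge c\,\ell\,|\varphi'(z_I)|$; this forces $|I|\le C\lambda(t)$ whenever the relevant part of $\varphi^*(I)$ lies in a ball of radius $t$, and summing over a covering of an $\rh^{\lambda}$-null set gives $\omega(E)=0$. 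You instead extract only the upper bound on $|\varphi'|$ and claim that $\diam\varphi(I)\le C\lambda(\ell)$ plus Borel--Cantelli bookkeeping yields absolute continuity; that inequality controls expansion, not compression, and is by itself consistent with $\omega\perp\rh^{\lambda}$ (Makarov's own counterexamples for small $A$ satisfy the same upper bound). Moreover the asserted two-sided comparison $\diam\varphi(I)\asymp \ell\,|\varphi'((1-\ell)e^{i\theta_0})|$ is false in general simply connected domains (long thin corridors): only the lower bound is valid. For part (a): your sets $E_k=\{e^{i\theta}:\sup_{r<1}|\varphi'(re^{i\theta})|\le k\}$ require a.e.\ \emph{radial boundedness} of $\varphi'$, but the justifications you offer do not give this: boundedness along a sequence $r_k\to 1$ does not place $\theta$ in any $E_k$, and a.e.\ finiteness of $\int_0^1(1-r)|g'(re^{i\theta})|^2\,dr$ is not a consequence of the Bloch property (for lacunary Bloch functions this area integral is infinite a.e.), nor would it yield boundedness of $\varphi'=e^{g}$. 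What is true, and what your covering argument needs, is that $\varphi'$ is radially (indeed nontangentially) bounded a.e., which follows from the classical fact that $\varphi'\in H^{p}$ for every $p<1/3$ when $\varphi$ is univalent; with that substitution, and the lateral Koebe comparison making $\varphi^*|_{E_k}$ locally Lipschitz with constant $Ck$ so that $\rh^1(\varphi^*(E_k))\le Ck|E_k|<\infty$, your $\sigma$-finiteness argument for (a) goes through.
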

We note  that Theorem \ref{makarov}  implies $\hd{\omega}=1$ when $\Omega$ is a simply connected domain. For arbitrary domains in the plane, in \cite{JW88}, Jones and Wolff proved that $\hd{\omega}\leq 1$ whenever $\Omega\subset\mathbb{R}^{2}$ and $\omega$ exists. In \cite{W93}, Wolff improved this result by showing that $\omega$ is concentrated on a set of $\sigma-$finite $\rh^{1}$ measure(see also \cite{B96,KW85,V93}).

The Hausdorff dimension of harmonic measure in higher dimensions is considerably less understood than in the plane. When $n\geq 3$, in \cite{B87}, Bourgain proved that $\hd{\omega}\leq n-\tau$, where $\tau>0$ depends only on the dimension $n$ and the exact value of $\tau$ remains unknown. On the other hand, in \cite{W95}, Wolff constructed examples in $\mathbb{R}^{3}$, we call {\it Wolff snowflakes}, for which the corresponding harmonic measures could have Hausdorff dimension either greater than $2$ or less than $2$. In \cite{LVV05}, the second author, Verchota, and the third author proved a conjecture of Wolff in the affirmative: it was shown that both sides of a Wolff snowflake in $\mathbb{R}^{n}$ could have harmonic measures, say $\omega_{1}, \omega_{2}$, with either $\min(\hd{\omega_{1}},\hd{\omega_{2}})>n-1\, \, \, \mbox{or}\, \, \, \max(\hd{\omega_{1}}, \hd{\omega_{2}})<n-1$.

If  $f(\eta)=|\eta|^{p}$ in (\ref{flaplace}), then the resulting PDE is called the \textit{$p-$Laplace equation}:
\begin{align}
\label{plaplace}
\nabla\cdot \left[ |\nabla u|^{p-2}\nabla u\right]=0.
\end{align}
In this case, a solution $u$ to (\ref{plaplace}) is called a \textit{$p-$harmonic function} and the corresponding measure in (\ref{ast}) associated with $u$ is called a \textit{$p-$harmonic measure} and will be denoted by $\mu_{p}$.

The nonlinearity and degeneracy of the $p-$Laplace equation makes it difficult to study the Hausdorff dimension of $p-$harmonic measure.  The first result was obtained in \cite{BL05}, when Bennewitz and the second author studied the Hausdorff dimension of a $p-$harmonic measure, associated with a positive $p-$harmonic function $u$ in $ N \cap \Om \subset \rn{2} $ with continuous boundary value 0 on $\ar\Om$. In that result $\ar\Om$ is a quasicircle and $N$ is an open neighborhood of $\ar\Om$. It was shown that all such measures, $\mu_p$, corresponding to $u$, $\Om$, $p$ as above, have the same Hausdorff dimension. Moreover,
\begin{align*} 
\hd{\mu_p}\geq 1\; \mbox{when}\; 1<p<2\; \mbox{ while } \hd{\mu_p}\leq 1\; \mbox{when}\; p>2. 
\end{align*} 
After earlier studies in \cite{BL05,L06,LNP11}, the second author proved the following analogue of Theorem \ref{makarov} in the $p-$harmonic setting (see \cite{L12});
\begin{thmx}[Lewis]
\label{lewis}
Assume that $\Omega\subset\mathbb{R}^{2}$ is a bounded simply connected domain and $N$ is a neighborhood of $\partial\Omega$. Let $u$ be a positive p-harmonic in $\Omega\cap N$ with zero continuous boundary values on $\partial\Omega$. Let $\mu_{p}$ be the $p-$harmonic measure associated with $u$ as described above. Let $ \lambda(r)$ be as in Theorem \ref{makarov}. Then
\begin{itemize}
\item[a)] If $1<p<2$, there exists $A=A(p)\geq 1$, such that $\mu_{p}\ll \rh^{\lambda}$.
\item[b)] If $2<p<\infty$, then $\mu_{p}$ is concentrated on a set of $\sigma-$finite $\rh^{1}$ measure.
\end{itemize}
\end{thmx}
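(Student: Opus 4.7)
The plan is to parallel Makarov's original proof of Theorem A, replacing the holomorphic tools tailored to the Laplacian by PDE methods adapted to the $p$-Laplacian. Makarov's argument hinges on three ingredients: the conformal uniformization $\phi:\mathbb{D}\to\Omega$, the Bloch property of $\log\phi'$, and the law of iterated logarithm (LIL) for Bloch functions. Since the $p$-Laplace equation is not conformally invariant, $\log\phi'$ is unavailable; its substitute is $v:=\log|\nabla u|$, whose behavior must be controlled through the PDE itself.

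First I would establish a boundary-to-interior comparison. Using the boundary Harnack principle for nonnegative $p$-harmonic functions vanishing on a portion of the boundary (as developed by Lewis--Nystr\"om), together with Caccioppoli estimates and the Euler--Lagrange formulation, one obtains
$$ \mu_p\bigl(B(x,r)\cap\partial\Omega\bigr)\ \approx\ r^{2-p}\,u\bigl(a_r(x)\bigr)^{p-1} $$
for $x\in\partial\Omega$ and any corkscrew point $a_r(x)$. In the planar quasicircle setting one further has $u(a_r(x))\approx r\,|\nabla u|(a_r(x))$, so
$$ \mu_p\bigl(B(x,r)\cap\partial\Omega\bigr)\ \approx\ r\,|\nabla u|\bigl(a_r(x)\bigr)^{p-1}. $$
Hence the Hausdorff behavior of $\mu_p$ is encoded in the boundary behavior of $v$.

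The main technical step is a Makarov-type LIL for $v$. Where $\nabla u\neq 0$, differentiating the $p$-Laplace equation shows that $v$ is a weak (sub)solution of a linear, uniformly elliptic, divergence-form equation whose coefficients depend on the direction of $\nabla u$. Caccioppoli estimates for this linearized equation, combined with Whitney decomposition of $\Omega\cap N$, yield a Bloch-type bound $|\nabla v(z)|\,\mathrm{dist}(z,\partial\Omega)\leq C$ and square-function control on Carleson boxes. Feeding these into a dyadic martingale LIL argument along nontangential paths produces, for $\mu_p$-a.e.\ $x\in\partial\Omega$ and small $r$,
$$ \bigl|v(a_r(x))-\overline{v}_r(x)\bigr|\ \leq\ A\,\sqrt{\log\tfrac{1}{r}\,\log\log\log\tfrac{1}{r}}, $$
where $\overline{v}_r(x)$ denotes a suitable averaged value of $v$ along the approach.

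Substituting this into the comparison formula produces the dichotomy. When $1<p<2$, choosing $A=A(p)$ sufficiently large absorbs the factor $|\nabla u|^{p-1}$ into the gauge $\lambda(r)$, and a Frostman-type covering argument upgrades the pointwise control to $\mu_p\ll\mathcal{H}^\lambda$; this is part (a). When $2<p<\infty$, the sign of $p-1$ in the complementary direction allows one to partition $\partial\Omega$ into countably many subsets on each of which $\mu_p(B(x,r)\cap\partial\Omega)\leq C\,r$, producing the claimed $\sigma$-finite $\mathcal{H}^1$ decomposition in part (b). The principal obstacle, throughout, is the absence of a conformal/holomorphic structure: executing the Bloch and LIL estimates using only PDE tools requires sharp up-to-the-boundary $C^{1,\alpha}$-regularity for $u$ near quasicircles, precise H\"older control of the ratio $\nabla u/u$, and a careful dyadic-martingale reduction that does not depend on the linearity that powers Makarov's classical argument.
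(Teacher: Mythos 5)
You should first note that this paper does not prove Theorem \ref{lewis} at all: it is quoted from \cite{L12}, and the introduction only records the ingredients of that proof, namely the linearized equation (\ref{Lzeta}) satisfied by $u$ and $u_{x_i}$, the sub/supersolution property of $v=\log|\nabla u|$ according to the sign of $p-2$, and, crucially, the fundamental inequality (\ref{funeq}) from \cite[Theorem 1.5]{LNP11}. Your outline follows that general scheme, but it has a genuine gap precisely at the point the paper flags as the hard one. You invoke the comparison $u(a_r(x))\approx r\,|\nabla u|(a_r(x))$, corkscrew points, boundary Harnack principles of Lewis--Nystr\"om, and $C^{1,\alpha}$ regularity of $u$ ``near quasicircles''; but Theorem \ref{lewis} concerns an arbitrary bounded simply connected domain, whose boundary need not be a quasicircle and need not satisfy any corkscrew, NTA, Lipschitz or Reifenberg-type condition, so none of those tools are available. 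The inequality $|\nabla \hat u|\approx \hat u/d(\cdot,\partial\Omega)$ in a general simply connected domain is exactly what the paper calls ``highly nontrivial'' and is the main theorem of \cite{LNP11}; it cannot be obtained from boundary Harnack estimates in this generality, and the argument you sketch (essentially the quasicircle case) is the strictly weaker result of \cite{BL05}, which moreover does not reach the Makarov gauge $\lambda$. The nontangential/dyadic-martingale LIL step likewise presupposes an approach-region structure that an arbitrary simply connected domain does not provide; in \cite{L12} this is circumvented by working with the $p$-capacitary function and integral (measure-theoretic) estimates for $\log|\nabla u|$ against $\mu_p$ rather than a pointwise LIL along corkscrew paths.

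There is also a concrete error in your mechanism for part (b): decomposing $\partial\Omega$ into sets on which $\mu_p(B(x,r)\cap\partial\Omega)\le Cr$ gives a \emph{lower} bound for $\mathcal{H}^{1}$ of those sets in terms of $\mu_p$, which is the wrong direction and does not yield $\sigma$-finiteness. What is needed is a lower density statement, that for $\mu_p$-a.e.\ $x$ one has $\limsup_{r\to 0}\mu_p(B(x,r))/r>0$; then a Besicovitch covering argument shows each set $\{x:\limsup_{r\to 0}\mu_p(B(x,r))/r>1/m\}$ has finite $\mathcal{H}^{1}$ measure, exactly as in the covering argument concluding the proof of Theorem \ref{alv13.1} in this paper. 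That lower density bound is what the subsolution property of $\log|\nabla u|$ for $p>2$ (the dichotomy is governed by the sign of $p-2$, not $p-1$) is used to produce, via estimates on $\int |\log|\nabla u||\,d\mu_p$ of the type appearing in Lemma \ref{logfgradu} here. As written, your proposal would need both the \cite{LNP11} input stated and used in the correct generality and the part (b) argument reversed before it could be considered a proof.
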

A key fact used in \cite{BL05,L06,L12,LNP11} is that if $\zeta=u$ or $\zeta=u_{x_{i}}$, $i=1, 2$, then $\zeta$ is a weak solution to
\begin{align}
\label{Lzeta}
L\zeta=\sum\limits_{j,k=1}^{2}\frac{\urpartial }{\urpartial x_{k}}\left(b_{jk} \frac{\urpartial \zeta}{\urpartial x_{j}}\right)=0\; 
\end{align}
where
\[
b_{jk}=|\nabla u|^{p-4}\left[ (p-2)u_{x_{j}}u_{x_{k}}+\delta_{jk}|\nabla u|^{2}\right].
\]
Furthermore, if $v=\log|\nabla u|$ then $Lv\leq 0 (Lv\geq 0)$ when $1<p\leq 2 (2\leq p<\infty)$. Moreover, arguments in these papers also make heavy use of the fundamental inequality;
\begin{align} 
\label{funeq}
\frac{\hat{u}(z)}{d(z,\partial\Omega)}\approx |\nabla \hat{u}(z)|\; \mbox{whenever}\; z\in \Omega\setminus \bar{B}(z_{0}, r_{0}).
\end{align}
where $\hat{u}$ is a certain ``$p-$capacitary function'' in $\Omega\setminus \bar{B}(z_{0}, r_{0})$ for some fixed $z_{0}\in\Omega$ and $r_{0}=d(z_{0}.\partial\Omega)/2$. The proof of (\ref{funeq}) is highly nontrivial in a simply connected domain when $1<p\neq 2<\infty$, and in fact is the main result proved by the second author, Nystr\"{o}m, and Poggi-Corradini in \cite[Theorem 1.5]{LNP11}. However if $p = 2$, (\ref{funeq}) is an easy consequence of the Koebe distortion estimates for a univalent function (use $\hat u$ = a Green's function for $\Omega$). We also note that (\ref{funeq}) can easily fail in arbitrary domains of $ \rn{n}$ for $n\geq 2$.

Tools developed for p-harmonic functions in a series of papers by the second author and Nystr\"{o}m were used in \cite{LNV11} to obtain that $\mu_{p}$ is concentrated on a set of $\sigma-$finite $\rh^{n-1}$ measure when $\partial\Omega \subset \rn{n} $ is sufficiently flat in the sense of Reifenberg, $u>0$ is $p$ harmonic near $\ar\Om$ and $p\geq n$.  It was also shown in the same paper that if $p\geq n$ then all examples produced by Wolff's method had $\hd{\mu_{p}}<n-1$, while if $p>2$, was near enough $2$, then there existed a Wolff snowflake for which $\hd{\mu_{p}}>n-1$. These examples provided the current authors with the necessary intuition to state and prove the following theorem in \cite{ALV13}.
\begin{thmx}[Akman, Lewis, Vogel]
\label{alv13}
Let $O \subset\mathbb{R}^{n}$ be an open set and $\hat{z}\in \partial O$, $\rho>0$. Let $u>0$ be p-harmonic in $O\cap B(\hat{z},\rho)$ with continuous zero boundary values on $\partial O\cap B(\hat{z},\rho)$, and let $\mu_{p}$ be the p-harmonic measure associated with $u$. If $p>n$ then $\mu_{p}$ is concentrated on a set of $\sigma-$finite $\rh^{n-1}$ measure. If $p=n$ the same conclusion is valid provided $\partial O\cap B(\hat{z}, \rho)$ is locally uniformly fat in the sense of $n-$capacity.
\end{thmx}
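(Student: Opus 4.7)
The plan is to establish the $\sigma$-finiteness of $\mu_p$ by writing $\partial O\cap B(\hat z,\rho)$ (up to a $\mu_p$-null set) as a countable union of sets on each of which $\mu_p$ is absolutely continuous with respect to $\mathcal{H}^{n-1}$ with bounded density. To begin, I would reduce to a local situation by replacing $B(\hat z,\rho)$ with a slightly smaller ball and using Vitali to cover $\partial O$ by a countable collection of boundary balls. Because $\mu_p$ is locally finite, it suffices to treat one such ball at a time. On this scale I would introduce a Whitney decomposition $\mathcal{W}$ of $O\cap B(\hat z,\rho)$ into dyadic cubes $Q$ with $\ell(Q)\approx \dist(Q,\partial O)$, together with an associated surface ball $\Delta_Q\subset\partial O$ and a corkscrew point $A_Q\in Q$.

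The next step uses the PDE. Standard Caccioppoli inequalities for $p$-harmonic functions give $\fint_{2Q}|\nabla u|^p\, dx\lesssim \bigl(u(A_Q)/\ell(Q)\bigr)^p$, and a test-function argument in \eqref{ast} together with the formula $d\mu_p=p|\nabla u|^{p-1}\,d\mathcal{H}^{n-1}$ valid on smooth parts yields a Carleson-type bound of the form
\[
\mu_p(\Delta_Q)\ \lesssim\ \ell(Q)^{n-1}\, \bigl(u(A_Q)/\ell(Q)\bigr)^{p-1}.
\]
Such an estimate can be obtained without invoking the fundamental inequality \eqref{funeq}, which is known to fail in general domains of $\reu$, and this is precisely what makes the argument available in all of $\rn{n}$.

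With this pointwise Carleson bound in hand, I would stratify. Define, for $k\in\Z$,
\[
E_k\ :=\ \Bigl\{\, x\in\partial O\cap B(\hat z,\rho/2)\ :\ \exists\ Q\in\mathcal{W}\text{ with }x\in\Delta_Q,\ \ 2^k\,\ell(Q)\ \le\ u(A_Q)\ <\ 2^{k+1}\,\ell(Q)\,\Bigr\}.
\]
Boundary regularity of $u$ implies $u(A_Q)/\ell(Q)\to 0$ as $\ell(Q)\to 0$ along any nontangential approach, so $\mu_p$-a.e. point of $\partial O$ lies in some $E_k$. On $E_k$ the Carleson bound combined with a covering-lemma argument gives $\mu_p(E_k)\lesssim 2^{k(p-1)}\,\mathcal{H}^{n-1}(E_k)$, and, more importantly, a dyadic summation estimate of the form $\mathcal{H}^{n-1}(E_k)\le C_k<\infty$ for each $k$. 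This gives the required $\sigma$-finiteness.

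The main obstacle is the final step, namely showing that each $E_k$ has finite $\mathcal{H}^{n-1}$ measure; equivalently, that one can pick a bounded-overlap subfamily of Whitney cubes at a common generation $k$ whose associated surface balls $\Delta_Q$ still cover $E_k$. This is where the hypothesis $p\ge n$ enters. For $p>n$ the $p$-capacity of a single point is positive, so a Wiener-type criterion is automatic and produces a uniform Hölder continuity rate of $u$ up to $\partial O$ with exponent strictly larger than what is needed to beat the power $n-1$; for $p=n$ the uniform $n$-capacitary fatness assumption serves the same role through the Kilpeläinen--Malý--Wiener estimate. These decay rates ensure that the generation-$k$ Whitney cubes that enter $E_k$ have aggregated $(n-1)$-dimensional content $\lesssim C_k$, completing the argument. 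The subtlety lies entirely in this quantitative boundary Hölder estimate, since once it is available the remaining ingredients are routine Whitney-Vitali combinatorics.
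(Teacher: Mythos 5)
Your reduction collapses at the stratification step, and the collapse is structural, not technical. With your definition, \emph{every} point of $\partial O\cap B(\hat z,\rho/2)$ lies in some $E_k$: any boundary point is within $C\ell(Q)$ of Whitney cubes $Q$ of arbitrarily small size, and since $u(A_Q)>0$ there is always an integer $k$ with $2^k\ell(Q)\le u(A_Q)<2^{k+1}\ell(Q)$. So if each $E_k$ really had finite $\rh^{n-1}$ measure, the whole boundary would have $\sigma$-finite $\rh^{n-1}$ measure — false already for boundaries of Hausdorff dimension $>n-1$ (e.g.\ snowflake-type surfaces, or boundaries of positive Lebesgue measure). The underlying reason the claim cannot be rescued is that membership in $E_k$ only supplies \emph{one} good scale per point; the measure-versus-content comparison it yields (via the harder inequality of Lemma \ref{uismu}, which is where fatness enters for $p=n$) bounds a Hausdorff \emph{content} at that scale, never the Hausdorff \emph{measure}, which requires coverings at arbitrarily small scales. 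Indeed $E_k$ contains whole surface balls $\Delta_Q$, whose $\rh^{n-1}$ measure can be infinite. Two further steps are also unsound as stated: $u(A_Q)/\ell(Q)\to 0$ is false (take a smooth boundary with nonvanishing normal derivative), and the bound $\mu_p(\Delta_Q)\lesssim \ell(Q)^{n-1}(u(A_Q)/\ell(Q))^{p-1}$ replaces $\esssup_{B}u$ by $u(A_Q)$, which needs Harnack chains that an arbitrary open set does not have. Finally, your proposed mechanism for $p\ge n$ — Wiener-type boundary H\"older decay — cannot control the number of Whitney cubes of a given generation, so it cannot produce the ``aggregated $(n-1)$-content $\le C_k$'' you invoke; this is exactly the point where the whole difficulty of the theorem sits.

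What the actual proof does is quite different: the hard content is to show that $\mu_p$ is concentrated on the set $P=\{x:\limsup_{t\to0}\mu_p(B(x,t))/t^{n-1}>0\}$, i.e.\ to kill the part of the measure with vanishing $(n-1)$-density; once that is known, the $\sigma$-finiteness of $P$ is the easy Besicovitch argument in (\ref{3.43}). The concentration statement is obtained by a Jones--Wolff/Wolff stopping-time construction: after the doubling reduction of Lemma \ref{prop} and rescaling, one stops on balls where $\mu'_f(B(z,t))=Mt^{n-1}$ or $t=s$, solves the equation in the resulting sawtooth domain, proves the gradient bound $|\nabla u''|\le cM^{1/(p-1)}$ and the boundary integral estimate of Lemma \ref{logfgradu}, and then builds (Proposition \ref{prop1}) a compact set $F$ of zero $\rh^{\hat\lambda}$ measure carrying a fixed proportion of the measure. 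The hypothesis $p\ge n$ enters not through boundary continuity but through the key fact of section \ref{subsol} (for the $p$-Laplacian, the key lemma of \cite{ALV13}): $\log|\nabla u|$ (here $\log f(\nabla u)$) is a weak subsolution of the linearized operator, which is what makes the maximum principle and the $\log$-integral estimate work; uniform fatness for $p=n$ is needed only for the right-hand inequality of Lemma \ref{uismu}. None of this machinery appears in your outline, so the proposal as written does not prove the theorem.
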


The definition of a \textit{locally uniformly fat set} will be given in section \ref{preplemmas}. We remark that Theorem \ref{alv13} and the definition of $\hd{\mu_{p}}$ imply that $\hd{\mu_{p}}\leq n-1$ for $p\geq n$. A key lemma proved in this paper states  that if $v=\log |\nabla u|$, then $Lv\geq 0, $ weakly on  $ \{ x : \nabla u ( x ) \not = 0 \}, \, $  when $p\geq n.$  Here $L$ is defined as in (\ref{Lzeta}) with 2 replaced by $n$ in the summation. Using this fact, some basic estimates for $p$ harmonic functions, and  a stopping stopping time argument as in \cite{JW88,W93}, we eventually arrived at Theorem \ref{alv13}.

In \cite{ALV12}, the authors studied the PDE (\ref{flaplace}), $\Delta_{f} u=0$, and showed in $\mathbb{R}^{2}$ that if $ u, f $ are sufficiently smooth and $ \nabla u (x )  \not = 0, $ then both $u$, $u_{x_{i}}$, $i=1,2$, satisfy
\begin{align}
\label{Ltilde}
\tilde{L}\zeta\, := \, \sum\limits_{k,j=1}^{2}\frac{\urpartial }{\urpartial x_{k}}\left(f_{\eta_{j}\eta_{k}}(\nabla u) \frac{\urpartial \zeta}{\urpartial x_{j}}\right)=0.
\end{align}
in an open neighborhood of $x$. Furthermore, if $\tilde{v}=\log f(\nabla u)$  then pointwise in this neighborhood $\tilde{L}\tilde{v}\leq 0 (\tilde{L}\tilde{v}\geq 0)$ when $1<p\leq 2 (2\leq p<\infty)$. In \cite{A13} it was shown by the first author for general $f$ as in \ref{prooff} that $\tilde{L}\tilde{v}\leq 0 (\tilde{L}\tilde{v}\geq 0)$ weakly when $1<p\leq 2 (2\leq p<\infty)$. Using this fact and following the game plan of \cite{BL05,LNP11}, the first author proved in the same paper that
\begin{thmx}[Akman]
\label{akman}
Let $\Omega\subset\mathbb{R}^{2}$ be any bounded simply connected domain and let $N$ be a neighborhood of $\partial\Omega$. Let $u$ be a positive weak solution to (\ref{flaplace}) in $\Omega\cap N$ with zero continuous boundary values on $\partial\Omega$. Let $\mu_{f}$ be the measure associated with $u$ as described above.
Let $\tilde{\lambda}(r):=r\,\exp\{A\sqrt{\log\frac{1}{r}\,\log\log\frac{1}{r}}\}\; \mathrm{for}\; 0<r<10^{-6}$. Then
\begin{itemize}
\item[a)] If $1<p\leq 2$, there exists $A=A(p, f)\geq 1$, such that $\mu_{f}\ll \rh^{\tilde{\lambda}}$.
\item[b)] If $2\leq p<\infty$, there exists $A=A(p, f)\leq -1$ such that $\mu_{f}$ is concentrated on a set of $\sigma-$finite $\rh^{\tilde{\lambda}}$ measure.
\end{itemize}
\end{thmx}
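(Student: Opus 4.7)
The plan is to follow the strategy developed in \cite{BL05,L12,LNP11} for the $p$-Laplacian and adapt it to the general $f$-Laplace setting using the key sign result $\tilde L \tilde v \leq 0$ (resp.\ $\tilde L \tilde v \geq 0$) from \cite{A13}, valid weakly when $1 < p \leq 2$ (resp.\ $2 \leq p < \infty$), where $\tilde v = \log f(\nabla u)$. Since $\Omega$ is bounded and simply connected, a Riemann map $\Phi : \mathbb{D} \to \Omega$ is available. As a first step I would replace $u$ by a canonical capacitary function: pick $z_0 \in \Omega$ with $r_0 := d(z_0, \partial\Omega)/2$, and let $\hat u$ solve $\Delta_f \hat u = 0$ in $\Omega \setminus \bar B(z_0, r_0)$ with $\hat u = 1$ on $\partial B(z_0, r_0)$ and $\hat u = 0$ on $\partial\Omega$. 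A boundary Harnack-type argument then shows that $\mu_f$ and $\mu_{\hat u}$ are comparable on compact pieces of $\partial\Omega \cap N$, so it suffices to prove the conclusion for $\mu_{\hat u}$.

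The crucial analytic step is to establish the analogue of (\ref{funeq}):
\[
\frac{\hat u(z)}{d(z, \partial\Omega)} \approx |\nabla \hat u(z)| \quad \text{whenever } z \in \Omega \setminus \bar B(z_0, r_0).
\]
In the $p$-Laplace case this is \cite[Theorem 1.5]{LNP11}, whose proof exploits the planar simply connected geometry in an essential way via Koebe distortion and hyperbolic geodesics. I would replay that argument, tracking how it depends on the structure constants of $f$ rather than on rotational invariance; the uniform convexity and homogeneity in (\ref{prooff}) should be enough to reproduce each step, but this is where the main difficulty lies. Once the fundamental inequality is in hand, homogeneity of $f$ gives $\tilde v \approx p \log|\nabla \hat u| \approx p \log(\hat u / d(\cdot, \partial\Omega))$, so $\tilde v$ is essentially a Bloch-type function on $\Omega$. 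Transferring to $\mathbb{D}$ via $\Phi$ and invoking Koebe distortion recasts the problem as one about a Bloch function attached to a linear divergence-form operator $\tilde L$ whose coefficients $f_{\eta_j \eta_k}(\nabla \hat u)$ form a uniformly elliptic matrix on $\{\nabla \hat u \neq 0\}$ and whose sign of action on $\tilde v$ is known.

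The final step is a Makarov-type analysis. One tiles $\Omega$ by Whitney squares, walks to the boundary along dyadic shadows, and runs a stopping-time / martingale argument in the spirit of \cite{JW88,W93,L12} to exploit the one-sided bound on $\tilde L \tilde v$ together with Moser's Harnack inequality for $\tilde L$. A law-of-the-iterated-logarithm estimate for the resulting martingale yields, on most Whitney squares of size $r$, a deviation of $\tilde v$ from its mean bounded by $C \sqrt{\log(1/r) \log \log(1/r)}$, which is precisely the growth rate encoded in $\tilde\lambda$. In case $1 < p \leq 2$ (where $\tilde L \tilde v \leq 0$) one obtains the upper density bound needed for $\mu_f \ll \mathcal H^{\tilde\lambda}$ with $A = A(p,f) \gg 1$; in case $2 \leq p < \infty$ the sign reverses and one extracts concentration of $\mu_f$ on a set of $\sigma$-finite $\mathcal H^{\tilde\lambda}$ measure for $A = A(p,f) \ll -1$. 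The principal obstacle, as noted above, is the fundamental inequality; a secondary technical point is that $\tilde L$ degenerates on the critical set $\{\nabla \hat u = 0\}$, which must be shown to have negligible effect on the Whitney-square counting.
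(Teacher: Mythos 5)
This statement (Theorem \ref{akman}) is not proved in the present paper at all: it is background, quoted from \cite{A13}, and the paper only indicates the method in one sentence, namely that $\tilde v=\log f(\nabla u)$ is a weak sub/super solution of $\tilde L$ according to the sign of $p-2$ and that one then follows the game plan of \cite{BL05,LNP11}. Your outline is consistent with that indicated route (sub/super solution property, capacitary function, fundamental inequality, Whitney/stopping-time argument giving the $\sqrt{\log\frac1r\log\log\frac1r}$ deviation, hence the non-sharp $\tilde\lambda$ of \cite{LNP11} rather than Makarov's $\log\log\log$), so at the level of strategy you have reconstructed the right skeleton.

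As a proof, however, it has genuine gaps exactly where the real work lies. First, the analogue of the fundamental inequality (\ref{funeq}) for $\Delta_f$ in an arbitrary bounded simply connected domain is not something you can simply ``replay'': the paper itself emphasizes that even in the $p$-harmonic case this is highly nontrivial and is the main theorem of \cite{LNP11}, and its proof there uses planar tools (quasiregularity of the complex gradient, connectivity/level-set arguments) whose transfer to a general $f$ satisfying only (\ref{prooff}) — in particular with no rotational invariance — must be carried out in detail; asserting that the structure ``should be enough'' is precisely the step that \cite{A13} has to supply. Second, your reduction from $u$ to the capacitary function $\hat u$ via ``a boundary Harnack-type argument'' is itself an unproved nonlinear comparison statement (comparability of the two boundary measures for two different solutions of (\ref{flaplace}) vanishing on a portion of the boundary of a general simply connected domain), and it is not how the cited arguments are usually organized. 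Finally, the Makarov-type step in this setting is not literally a martingale law of the iterated logarithm but a square-function/Carleson-type estimate for $\tilde v$ built from the one-sided bound on $\tilde L\tilde v$ together with Harnack for the degenerate operator $\tilde L$, and the degeneracy on $\{\nabla\hat u=0\}$ needs the regularization device of Lemma \ref{v'issolnlemma} (truncation $\max\{\tilde v,\theta\}$) rather than a remark that the critical set is negligible. So: right skeleton, but the two decisive inputs are assumed rather than proved.
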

Note that  Theorem \ref{akman} implies
\[
\begin{aligned}
\hd{\mu_{f}} \left\{
\begin{array}{ll}
\geq 1 & \, \mbox{when}\, \, 1<p< 2,\\
=1 & \, \mbox{when}\, \, p=2,\\
\leq 1 & \, \mbox{when}\, \, 2< p<\infty.
\end{array}
\right.
\end{aligned}
\] We also note that
Theorem \ref{akman} is slightly weaker than Theorem \ref{makarov} when $f(\eta)=|\eta |^{2}$, $\mu_{f}=\omega$, and Theorem \ref{lewis} when $f(\eta)=|\eta|^{p}$, $1<p\neq 2<\infty$, $\mu_{f}=\mu_{p}$.

In this paper, we focus on the Hausdorff dimension of $\mu_{f}$, in  the same setting as in Theorem \ref{alv13}.  More specifically we prove
\begin{theorem}
\label{alv13.1}
Let $O \subset\mathbb{R}^{n}$ be an open set and $\hat{z}\in \partial O$, $\rho>0$. Let $f$ be as in (\ref{prooff}). Let $u>0$ be a weak solution to $\Delta_{f} u=0$ (see \ref{flaplace}) in $O\cap B(\hat{z},\rho)$ with continuous zero boundary values on $\partial O\cap B(\hat{z},\rho)$, and  let $\mu_{f}$ be the measure associated with $u$ as in (\ref{ast}). \\
If $p>n$ then $\mu_{f}$ is concentrated on a set of $\sigma-$finite $\rh^{n-1}$ measure. The same result holds when $p=n$ provided that $\partial O\cap B(\hat{z}, \rho)$ is locally uniformly fat in the sense of $n-$capacity.
\end{theorem}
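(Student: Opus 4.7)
The plan is to adapt the proof of Theorem~\ref{alv13} from the $p$-Laplace setting to the general $f$-Laplace setting, following the blueprint of \cite{ALV13} with $|\nabla u|^p$ replaced by $f(\nabla u)$ and $\log|\nabla u|$ replaced by $\tilde v:=\log f(\nabla u)$. The starting observation is linearization: where $\nabla u\neq 0$, both $u$ and each $u_{x_i}$ satisfy
\begin{equation*}
\tilde L\zeta := \sum_{j,k=1}^{n} \frac{\partial}{\partial x_k}\!\bigl(f_{\eta_j\eta_k}(\nabla u)\,\partial_{x_j}\zeta\bigr)=0,
\end{equation*}
and the uniform convexity in \eqref{prooff}, combined with the $(p-2)$-homogeneity of $\mathcal{D}^2 f$ implicit in \eqref{fisdegreep}, gives the quantitative ellipticity bound $c_*^{-1}|\nabla u|^{p-2}|\xi|^2\leq \sum_{j,k} f_{\eta_j\eta_k}(\nabla u)\,\xi_j\xi_k\leq c_*|\nabla u|^{p-2}|\xi|^2$ for all $\xi\in\mathbb{R}^n$.

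The core step, the analogue of the $p$-harmonic lemma described just after Theorem~\ref{alv13}, is to prove that when $p\geq n$ the function $\tilde v=\log f(\nabla u)$ is a weak $\tilde L$-subsolution on $\{x:\nabla u(x)\neq 0\}$. I would first establish this pointwise under the extra assumption that $f$ and $u$ are smooth and $\nabla u\neq 0$: expanding $\tilde L\tilde v$ via the chain rule, using \eqref{fisdegreep} and the equation $\Delta_f u=0$ to eliminate third-order terms, the claim reduces to a pointwise quadratic inequality in the Hessian $\mathcal{D}^2 u$. The threshold $p\geq n$ then appears exactly as in \cite{ALV13}, via the spectral inequality $\operatorname{tr}(A^2)\geq (\operatorname{tr} A)^2/n$ applied to a suitable symmetric matrix built from $\mathcal{D}^2 f(\nabla u)^{1/2}\,\mathcal{D}^2 u\,\mathcal{D}^2 f(\nabla u)^{1/2}$. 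To remove the smoothness hypothesis I would approximate $f$ by a sequence of smooth $p$-homogeneous uniformly convex $f_\ell$ (mollify on the unit sphere and rescale by homogeneity), invoke the $C^{1,\alpha}$-stability of weak solutions of $\Delta_{f_\ell}u=0$ and the associated convergence of the Riesz measures, and pass the differential inequality to the limit in a distributional sense, in the spirit of \cite{A13}.

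Granted the subsolution property, the remainder follows the stopping-time and covering scheme of Jones--Wolff \cite{JW88,W93} as implemented in \cite{ALV13}. One uses \eqref{ast} together with interior Harnack and Caccioppoli estimates for $\Delta_f u=0$ to compare $\mu_f(B(x,r))$ with $r^{n-1}f(\nabla u(a_r(x)))$ at a suitable corkscrew point $a_r(x)$, and exploits the submean-value property of $\tilde v$ coming from $\tilde L\tilde v\geq 0$ to run a good-$\lambda$ selection on Whitney balls. The output is a countable decomposition of $\mu_f$-almost all of $\partial O\cap B(\hat z,\rho)$ into pieces of finite $\mathcal{H}^{n-1}$ content, which gives the asserted $\sigma$-finiteness. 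The borderline case $p=n$ works identically once local uniform $n$-fatness of $\partial O\cap B(\hat z,\rho)$ is invoked to supply the boundary H\"older continuity and boundary nondegeneracy of $u$ that, for $p>n$, are free of charge from Morrey's embedding.

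The principal obstacle I foresee is the rigorous justification of $\tilde L\tilde v\geq 0$ in $\mathbb{R}^n$ under the limited regularity permitted by \eqref{prooff}, where $u$ is only $C^{1,\alpha}$, the critical set $\{\nabla u=0\}$ may have positive measure, and $f_{\eta_j\eta_k}$ is merely continuous off the origin. Passing the inequality through smooth approximation without losing the exact $p\geq n$ threshold, and handling the anisotropy of $f$ in the Hessian algebra (which makes the computation substantially more involved than in the $p$-Laplace case of \cite{ALV13}, where the eigenvectors of $\mathcal{D}^2 f$ are automatically aligned with $\nabla u$ and its orthogonal complement), is where most of the work lies; once the subsolution property is in place, the remainder is a careful adaptation of the stopping-time machinery of \cite{ALV13} to the $f$-Laplace setting.
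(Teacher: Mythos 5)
You follow essentially the same route as the paper: the heart is the lemma that $\log f(\nabla u)$ is a weak subsolution of $\tilde{L}$ when $p\geq n$, obtained exactly as you sketch by diagonalizing the symmetric matrix conjugate to $\mathcal{D}^2f(\nabla u)^{1/2}\,\mathcal{D}^2u\,\mathcal{D}^2f(\nabla u)^{1/2}$, whose trace vanishes by the equation (Section \ref{subsol} and Lemma \ref{v'issolnlemma}, which handles the critical set by truncating $\log f(\nabla u)$ from below rather than by smoothing $f$), after which the Jones--Wolff/\cite{ALV13} stopping-time and covering machinery is run as you indicate (Sections \ref{advancedregres}--\ref{mainproof}). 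Two small caveats: the operative spectral fact is that a trace-free symmetric matrix $E$ with eigenvalues $e_i$ satisfies $\sum_i e_i^2\geq\tfrac{n}{n-1}\max_i e_i^2$ (your $\operatorname{tr}(A^2)\geq(\operatorname{tr}A)^2/n$ is vacuous once the trace is zero), and mollification of $f$ is used in the paper only for the barrier/gradient bound of Lemma \ref{derofuisbdd}, the subsolution property itself being proved directly in the weak formulation via the $W^{2,2}_{\mathrm{loc}}$ estimate of Lemma \ref{localholderfornablau}.
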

\begin{remark}
Theorem \ref{alv13.1} and the definition of the Hausdorff dimension of a measure imply once again that $\hd{\mu_{f}}\leq n-1$ when $p\geq n$.
\end{remark}
We also construct for a given $ f $ some domains in $\mathbb{R}^{n}$ for which $\hd{\mu_{f}} < n-1$ when
$p\geq n$. To give the construction, let $0<\alpha<\beta<1/2$ be fixed numbers and let $S$ be the cube in $\mathbb{R}^{n}$ with side length $1$ and centered at $0$. Let $S'$ be the cube with side length $a_0=1/2$ and centered at $0$ and set $\mathcal{C}_{0}=S'$. Let $\tilde{Q}_{1,1},\ldots, \tilde{Q}_{1,2^{n}}$ be the closed corner cubes of $\mathcal{C}_{0}$ of side length $a_{0}a_{1}$, $\alpha\leq a_{1}\leq\beta$. Let $\mathcal{C}_{1}=\bigcup\limits_{i=1}^{2^{n}}\tilde{Q}_{1,i}$. Let $\{\tilde{Q}_{2,j}\}$, $j=1,\ldots, 2^{2n},$ be the closed corner cubes of each $\tilde{Q}_{1,i}$, $i=1,\ldots, 2^{n}$ of side length $a_{0}a_{1}a_{2}$, $\alpha\leq a_{2}\leq \beta$. Let $\mathcal{C}_{2}=\bigcup\limits_{j=1}^{2^{2n}}\tilde{Q}_{2,j}$ (see figure \ref{forucornercantorset}).

\begin{figure}[!ht]
\centering
\begin{tikzpicture}[scale=.7]
\begin{scope}[shift={(-10,0)}]
\draw[fill=gray, opacity=.5] (0,0) rectangle (4,4);
\node[below, label={[xshift=1.5cm, yshift=-1cm]$\mathcal{C}_{0}$}] (4,-2) {};
\end{scope}
\begin{scope}[shift={(-5,0)}]
\foreach \i in {0,3}
\foreach \j in {0,3}
{
\draw[fill=gray, opacity=.5] (\i, \j) rectangle (1+\i, 1+\j);
}
\node[below, label={[xshift=1.5cm, yshift=-1cm]$\mathcal{C}_{1}$}] (4,-2) {};
\end{scope}
\begin{scope}[shift={(0,0)}]
\foreach \i in {0,.66,3,3.66}
\foreach \j in {0,.66,3,3.66}
{
\draw[fill=gray, opacity=.5] (\i, \j) rectangle (.3+\i, .3+\j);
}
\node[below, label={[xshift=1.5cm, yshift=-1cm]$\mathcal{C}_{2}$}] (4,-2) {};
\end{scope}
\begin{scope}[shift={(5,0)}]
\foreach \j in {0,.66,3,3.66}
\foreach \k in {0,.66,3,3.66}
{
\begin{scope}[shift={(\j,\k)}]
\foreach \i in {1,...,10}
  \fill[red] (rnd*.33, rnd*.33) circle (.75pt);
  \end{scope}
 } 
 \node[below, label={[xshift=1.5cm, yshift=-1cm]$\mathcal{C}$}] (4,-1) {};
\end{scope}
\end{tikzpicture} 
\caption{The sets $\m{C}_{0}, \m{C}_{1}, \m{C}_{2}, \m{C}$ when $n=2$.}
\label{forucornercantorset}
\end{figure}

Continuing recursively, at the $m$ th step we get $2^{nm}$ closed cubes, $\tilde{Q}_{m, j}$, $j=1, \ldots, 2^{nm}$, of side length $a_{0}a_{1}a_{2}\ldots, a_{m}$, $\alpha\leq a_{m} \leq \beta$. Let $\mathcal{C}_{m}=\bigcup\limits_{j=1}^{2^{nm}}\tilde{Q}_{m,j}$. Then $\mathcal{C}$ is obtained as the limit in the Hausdorff metric of $\mathcal{C}_{m}$ as $m\to\infty$.

Following an unpublished result of Jones and Wolff (see \cite[Chapter IX, Theorem 2.1]{GM05}), we prove
\begin{theorem}
\label{alv13.2}
Let $S$ be the unit cube and $\m{C}$ be the set constructed above. Let $u^{\infty}$ be a positive weak solution to (\ref{flaplace}) for fixed $p\geq n$ in $S\setminus \m{C}$ with boundary values $1$ on $\partial S$ and $0$ on $\m{C}$. Let $\mu^{\infty}_{f}$ be the measure associated with $u^{\infty}$ as in (\ref{ast}).\\ 
Then $ \hd{\mu^\infty_f}\leq n -1-\de$ for some $ \de=\de (p,n, c_*,\al,\be,f)>0$. 

Moreover, $ \de \geq c^{-1} (p- n) $ where $ c \geq 1 $ can be chosen to depend only on $n, \al, \be$, and $c_*$ in (\ref{prooff}) when $p\in [n, n+1]$.  

If $f=g^p$ where $ g $ is homogeneous of degree $1$, uniformly convex, and has continuous second partials, then $\de$ can be chosen independent of $p\in [n, n+1]$, so depends only on $n, \al, \be, g$.   
\end{theorem}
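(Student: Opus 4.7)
The plan is to adapt the Jones--Wolff strategy for harmonic measure on the four-corner Cantor set (cf.~\cite[Chapter~IX, Theorem~2.1]{GM05}) to the quasilinear operator $\Delta_f$, using as the crucial PDE input the weak subharmonicity of $\log f(\nabla u)$ under the linearized operator $\tilde{L}$ established in \cite{A13} and underlying Theorem~\ref{alv13.1}. The argument breaks into three stages: reduction to a one-step building-block problem on $S\setminus\mathcal{C}_1$, a strict dispersion estimate for that building block, and iteration up the Cantor tower.

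For the reduction, fix a generation $m$ and a Cantor cube $\tilde{Q}_{m,j}$ of side $\ell_m = a_0 a_1\cdots a_m$. By the comparison principle for $\Delta_f$, the boundary Harnack inequality, and the doubling estimates for $\mu^{\infty}_{f}$ available when $p\geq n$ (using the automatic uniform $n$-fatness of $\mathcal{C}$), $\mu^{\infty}_{f}$ restricted to the children $\tilde{Q}_{m+1,i}$ of $\tilde{Q}_{m,j}$ is comparable, up to a multiplicative error depending only on $n, \alpha, \beta, c_*, p, f$, to the elliptic measure of a rescaled local problem on $S$ with zero data on the corresponding rescaled corner cubes and unit data on $\partial S$. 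It therefore suffices to control, at every generation, the geometric mean of the one-step ratios $R_{m,j,i} := \mu^{\infty}_{f}(\tilde{Q}_{m+1,i})/\mu^{\infty}_{f}(\tilde{Q}_{m,j})$, which satisfy $\sum_{i=1}^{2^n} R_{m,j,i} = 1$.

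The heart of the proof is the strict dispersion inequality for the building block. Let $v$ be the positive weak solution of $\Delta_f v = 0$ in $S \setminus \mathcal{C}_1$ with $v=1$ on $\partial S$ and $v=0$ on $\mathcal{C}_1$, and $\nu$ the associated elliptic measure. My target is
\[
\prod_{i=1}^{2^n} \nu(\tilde{Q}_{1,i})^{1/2^n} \;\leq\; (1-\kappa)\, 2^{-n}
\]
for some $\kappa = \kappa(n,\alpha,\beta,c_*,p,f) > 0$ whenever $p\geq n$. Even when $f$ and the Cantor configuration are symmetric, so that the four level-$1$ masses are forced to be equal by the dihedral symmetry, the dispersion surfaces one level deeper: the sub-cubes of different $\tilde{Q}_{1,i}$'s decompose into several orbits under the global symmetry group, and the masses in different orbits are genuinely unequal. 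The key PDE input is $\tilde{L}\log f(\nabla v) \geq 0$ from \cite{A13}, tested against a suitable $\tilde{L}$-harmonic weight on $\partial S \cup \mathcal{C}_1$; this produces a one-sided Jensen-type inequality whose strict form delivers the factor $(1-\kappa)$. Iterating across generations yields
\[
\prod_{j=1}^{2^{nm}} \mu^{\infty}_{f}(\tilde{Q}_{m,j})^{1/2^{nm}} \;\leq\; C\,(1-\kappa)^{m}\, 2^{-nm},
\]
and a standard entropy/pigeonhole covering argument in the spirit of \cite{JW88, W93} then exhibits a Borel set of $\mathcal{H}^{n-1-\delta}$-measure zero carrying full $\mu^{\infty}_{f}$-mass, with $\delta$ a fixed positive multiple of $\kappa$.

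For the quantitative claims, direct inspection of $\tilde{L}\log f(\nabla v)$ should give a lower bound of the form $c\,(p-n)\,|\nabla v|^{p-2}\,|D^2 v|^{2}$ plus nonnegative terms when $p \in [n, n+1]$, so that $\kappa \gtrsim p-n$ and hence $\delta \geq c^{-1}(p-n)$. When $f = g^p$ with $g$ homogeneous of degree $1$, writing $f_{\eta_j\eta_k}$ in terms of $g$ and its derivatives factors the $p$-dependence out of the key strict inequality, giving $\delta$ uniform in $p \in [n, n+1]$. The main technical obstacle will be making the dispersion genuinely strict at the critical exponent $p = n$: here $\log f(\nabla v)$ is only weakly $\tilde{L}$-subharmonic, and extracting $\kappa > 0$ requires combining the $n$-capacity fatness of $\mathcal{C}_1$ with a quantitative boundary Harnack lower bound on $|\nabla v|$ near the corner cubes, so as to preclude $\nabla v$ from degenerating toward an affine gradient throughout a full positive-$\tilde{L}$-harmonic-measure subset of the building block.
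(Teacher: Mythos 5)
Your strategy has a structural flaw that goes beyond missing details: the quantity you propose to control is the wrong one for the conclusion of Theorem \ref{alv13.2}. A per-generation ``dispersion'' bound on the geometric mean of the $2^n$ children masses measures the entropy deficit of $\mu^\infty_f$ relative to the \emph{natural} measure on $\m{C}$, and iterating it can at best give $\hd \mu^\infty_f\leq (1-c\kappa)\,\hd\m{C}$. Since $\hd\m{C}\to n$ as $\beta\to 1/2$, a small deficit $\kappa$ coming from a local PDE inequality cannot produce the codimension-one bound $n-1-\delta$; you would need $\kappa$ of definite size uniformly in $\beta$, and nothing in your outline produces that. Moreover the strict one-step inequality is false in the symmetric self-similar case (all first-generation masses are equal), as you concede; pushing the strictness ``one level deeper'' is essentially the content of Carleson's theorem in the harmonic case and is not supplied by testing $\ti L\log f(\nabla v)\geq 0$ against a weight --- that inequality gives integral information about $\log f(\nabla v)$, not a Jensen-type inequality distributing $\nu$ among the corner cubes. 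Finally, for the nonlinear measure the conditional masses are only quasi-multiplicative across generations (comparison constants from Harnack/boundary estimates at every step), so your iterated bound would carry $C^m$ rather than a single constant $C$, which destroys a small $(1-\kappa)^m$ gain.

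The paper's proof avoids all of this by following the Jones--Wolff scheme with the threshold $\mu(\ti Q)\geq M\, s(\ti Q)^{n-1}$, i.e.\ a comparison directly against $s^{n-1}$ (this is what makes a bound below $n-1$ possible even when $\hd\m{C}>n-1$): one stops at such maximal cubes, solves \eqref{flaplace} outside them, and proves the key Lemma \ref{finess}, namely that every cube containing a stopped cube of much smaller side length carries, at definite distance from the boundary, an amount of the measure $\nu=\ti L\bar v$ comparable to $\bar\mu(\bar Q)$, because two components of a sublevel set of $\bar u$ must merge at a critical point of $\nabla\bar u$; summing over the $\approx -\log s(\ti Q')$ ancestors of each stopped cube and bounding $\int \bar u\, d\nu\lesssim \breve c\,\log M$ by the divergence-theorem/sub-solution estimate yields \eqref{new3} and the dimension drop $\delta'\approx (\breve c)^{-1}$. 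Your treatment of $p=n$ also misidentifies the difficulty: the issue is not degeneration of $|\nabla v|$ near the corner cubes (there one has good barriers), but that the sub-solution inequality becomes an equality at $p=n$, so the density of $\nu$ may vanish even where $\nabla\bar u\neq 0$; the paper must rule this out by the eigenvector analysis forcing $f(\nabla\bar u)$ to be constant on level-set components together with a Hopf-lemma contradiction when $n\geq 3$, by a quasiregular-mapping argument showing $\nu$ is a positive atomic measure when $n=p=2$, and by a compactness argument to obtain constants uniform for $p\in[n,n+1]$ and for $f=g^p$ --- none of which is addressed in your proposal.
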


In what follows, we state some regularity results for $u$ in section \ref{preplemmas}. In section \ref{subsol}, we show that $\log f(\nabla u)$ is a weak sub solution to $\tilde{L}$ when $p\geq n$ where $\tilde{L}$ is as in (\ref{Ltilde}) with $2$ replaced by $n$ in the summation. In section \ref{advancedregres} we prove more advanced regularity results and essentially begin the proof of Theorem \ref{alv13.1}. In section \ref{mainproof}, we prove a proposition and finish the proof of Theorem \ref{alv13.1}. In section \ref{section5}, we prove Theorem \ref{alv13.2}.

In general to prove Theorem \ref{alv13.1} we follow the proof of Theorem \ref{alv13} which in turn made effective use of the proof scheme in
\cite{JW88,W93}. However the proof that $ \log f(\nabla u)$ is a weak sub solution to $\tilde{L}$
is more involved, and in fact somewhat surprising to us, than the corresponding proof for $ f ( \nabla u ) = |\nabla u |^p, $
since in this case we could use rotational invariance of the $p$ Laplace equation to considerably simplify the calculations. Also regularity results for $ u, \nabla u, \log f ( \nabla u ), $ require more care than in \cite{ALV13} due to the nearly endpoint structural assumptions on $ f $ in  \eqref{prooff}.

Likewise to prove Theorem \ref{alv13.2}, we use the proof scheme in \cite[chapter IX]{GM05} only now we have little control over the zeros of $ \nabla u$. This lack of control forces us into an alternative finess type argument which produces the `hodge podge' of results on $ \de $ in Theorem \ref{alv13.2}, rather than what we hoped to prove, namely  $ \de > a > 0 $ on $ [n, n + 1]$ (provided $ c_* $ in \eqref{prooff} is constant for $ p \in [n, n+1]$).
\section{Notation and Preparatory Lemmas}
\label{preplemmas}
Let $x=(x_{1}, \ldots, x_{n})$ denote points in $\mathbb{R}^{n}$ and let $\overline{E},$ $\partial E,$ be the closure and boundary of the set $E\subset\mathbb{R}^{n}.$ Let $\langle \cdot, \cdot\rangle$ be the usual inner product in $\mathbb{R}^{n}$ and $|x|^{2}=\langle x, x\rangle.$  Let $d(E,F)$ denote the distance between the sets $E$ and $F$. Let $B(x, r)$ be the open ball centered at $x$ with radius $r > 0$ in $\mathbb{R}^{n}$ and let $dx$ denote Lebesque $n-$measure in $\mathbb{R}^{n}$. Given $O'$ an open set $ \subset \mathbb{R}^{n}$ and $q, 1 \leq q \leq \infty$, let $W^{1,q}(O')$
denote equivalence classes of functions $h:\mathbb{R}^{n}\to\mathbb{R}$ with distributional gradient $\nabla h =\langle h_{x_1}, \ldots, h_{x_n}\rangle$, both of which are $q$ th power integrable on $O'$ with Sobolev norm

\begin{align*}
\|h\|^{q}_{W^{1,q}(O')}=\int\limits_{O'}(|h|^{q}+|\nabla h|^{q})\rd x.
\end{align*}
Let $C^{\infty}_{0}(O')$ be the set of infinitely differentiable functions with compact support in $O'$ and let $W^{1,q}_{0}(O')$ be the closure of $C^{\infty}_{0}(O')$ in the norm of $W^{1,q}(O')$.

Let $K\subset \overline{B}(x,r)$ be a compact set and let $\mathfrak{A}:=\{\phi\in W^{1,n}_{0}(B(x,2r)):\; \phi\equiv 1\; \mbox{on}\; K\}$. We let
\begin{align}
\label{unfat}
\mbox{Cap}(K, B(x,2r)):=\inf\limits_{\phi\in \mathfrak{A}}\; \int\limits_{\mathbb{R}^{n}}|\nabla \phi|^{n}\rd x.
\end{align}
We say that a compact set $K\subset\mathbb{R}^{n}$ is \textit{locally $(n, r_{0})$ uniformly fat} or \textit{locally uniformly $(n,r_{0})$ thick} provided there exists $r_{0}$ and $c$ such that whenever $x\in K$ and $0\leq r \leq r_{0}$,
\[
\mbox{Cap}(K \cap \overline{B}(x,r), B(x,2r))\geq c>0.
\]

In the sequel, $c$ will denote a positive constant $\geq 1$ (not necessarily the same at each occurrence), which may depend only on $ p, n, c_*$ unless otherwise stated. In general, $c(a_1, \dots, a_n)$ denotes a positive constant $\geq 1$ which may depend only on $p, n, c_*, a_1, \dots, a_n$ not necessarily the same at each occurrence.  $A\approx B$ means
that $ A/B $ is bounded above and below by positive constants depending only on $p, n, c_*$. 

In this section, we will  always assume that $ 2 \leq n \leq p < \infty$, and $r>0$. We also assume that $\tilde{O}$ is an open set in $\mathbb{R}^{n}$ and $w\in\partial\tilde{O}$.

We begin by stating some interior and boundary estimates for a positive weak solution $\tilde{u}$ to (\ref{flaplace}) in $\tilde{O}\cap B(w, 4r)$. If $p=n$, we assume $\partial\tilde{O}\cap \overline{B}(w,4r)$ is $(n, r_{0})$ uniformly fat as defined above using the capacity in (\ref{unfat}). We assume that $\tilde{u}$ has zero boundary value on $\partial\tilde{O}\cap B(w,4r)$ in the Sobolev sense and we extend $\tilde{u}$ as above by putting $\tilde{u}\equiv 0$ on $B(w,4r)\setminus \tilde{O}$. Then as in (\ref{ast}) let $\tilde{\mu}_{f}$ be the positive Borel measure corresponding to $\ti u$.

References for the proofs of Lemmas \ref{fislegthanu}-\ref{localholderfornablau} can be found in \cite{ALV13} where these lemmas are stated for $f(\eta)=|\eta|^p$, however they also hold for $f$ as in \eqref{prooff}. Let $c_*$ be as in \eqref{prooff}.
\begin{lemma} 
\label{fislegthanu}
Let $\tilde{O}, w,r,\tilde{u}, f, \tilde{\mu}_{f}$ be as above. Then there exists constant $c=c(p, n, c_*) $ such that
\begin{align*}
&\frac{1}{c}\, r^{p-n}\int\limits_{B(w,\frac{r}{2})} f(\nabla \tilde{u})\rd x \leq \esssup\limits_{B(w,r)}\tilde{u}^{p}\leq c\, \frac{1}{r^{n}}\int\limits_{B(w,2r)}\tilde{u}^{p}\rd x.
\end{align*}
If $B(z,2r')\subset\tilde{O}\cap B(w,4r)$ for some $r'>0$ then there is a constant $c=c(p,n,c_*)$ such that
\begin{align*}
\esssup\limits_{B(z,r')} \tilde{u} \leq c\, \essinf\limits_{B(z,r')} \tilde{u}.
\end{align*}
\end{lemma}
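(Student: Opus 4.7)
The plan is to deduce both inequalities from a Caccioppoli estimate and Moser iteration, together with a standard Harnack inequality for quasilinear equations, using only the structural consequences of \eqref{prooff}. First I would record three useful facts: (i) $f(\eta)\approx|\eta|^p$ with constants depending only on $p,n,c_*$ (obtained by integrating the uniform convexity bound on $1/2<|\eta|<1$ and extending by the degree-$p$ homogeneity in (a)); (ii) $|\m{D}f(\eta)|\le c|\eta|^{p-1}$, a consequence of (a)--(c); and (iii) the ellipticity identity $\langle\m{D}f(\eta),\eta\rangle=p\,f(\eta)\ge c^{-1}|\eta|^p$ from \eqref{fisdegreep}. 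The zero-extension of $\ti u$ across $\partial\ti O\cap B(w,4r)$ (still denoted $\ti u$) lies in $W^{1,p}(B(w,4r))$ by the Sobolev-sense zero boundary values, and the representation \eqref{ast} says precisely that this extension is a nonnegative weak subsolution of $\Delta_f u=0$ on $B(w,4r)$, because $\ti\mu_f\ge 0$ forces $\int\langle\m{D}f(\nabla\ti u),\nabla\theta\rangle\,\rd x\le 0$ for every nonnegative test function $\theta\in C_0^\infty(B(w,4r))$.

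For the lower bound in the first estimate I would run the Caccioppoli argument: pick $\phi\in C_0^\infty(B(w,r))$ with $\phi\equiv1$ on $B(w,r/2)$ and $|\nabla\phi|\le c/r$, and test \eqref{ast} against $\theta=\ti u\phi^p$. Expanding $\nabla\theta=\phi^p\nabla\ti u+p\phi^{p-1}\ti u\nabla\phi$, using (iii) on the main term, (ii) on the cross term, and absorbing via Young's inequality gives
\[
\int_{B(w,r/2)}|\nabla\ti u|^p\,\rd x \le c\,r^{-p}\int_{B(w,r)}\ti u^p\,\rd x.
\]
Combined with (i) in the form $f(\nabla\ti u)\le c|\nabla\ti u|^p$ and the trivial bound $\int_{B(w,r)}\ti u^p\,\rd x\le c\,r^n\esssup_{B(w,r)}\ti u^p$, this yields the claimed
\[
r^{p-n}\int_{B(w,r/2)}f(\nabla\ti u)\,\rd x\le c\esssup_{B(w,r)}\ti u^p.
\]
The upper bound $\esssup_{B(w,r)}\ti u^p\le c\,r^{-n}\int_{B(w,2r)}\ti u^p\,\rd x$ is then the classical Moser iteration for nonnegative subsolutions: testing against $\ti u^{q-1}\phi^p$ for an increasing sequence of exponents $q$ on a decreasing sequence of radii $r_k\searrow r$, combining Sobolev embedding on each step with the structural bounds (ii)--(iii), and summing the resulting geometric progression.

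The interior Harnack inequality on $B(z,r')\subset\ti O\cap B(w,4r)$ is immediate from Serrin's or Trudinger's Harnack principle for positive weak solutions of quasilinear divergence-form equations whose structure constants depend only on $p,n,c_*$; the equation $\Delta_f u=0$ fits that framework by (ii)--(iii). The main technical point throughout is simply keeping all constants uniform in $p,n,c_*$ in the absence of rotational symmetry of $f$; since the inner-product identities used in \cite{ALV13} for $f(\eta)=|\eta|^p$ are replaced by the analogous $\m{D}f$-based identities afforded by \eqref{fisdegreep} and uniform convexity, no qualitatively new idea is required beyond the arguments of that reference.
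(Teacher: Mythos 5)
Your proposal is correct and follows the same route the paper intends: the paper simply defers to \cite{ALV13} (and standard quasilinear theory) after noting that $f$ as in \eqref{prooff} satisfies the usual structure conditions, and your argument supplies exactly that — deriving $f(\eta)\approx|\eta|^p$, $|\m{D}f(\eta)|\le c|\eta|^{p-1}$, $\langle\m{D}f(\eta),\eta\rangle\ge c^{-1}|\eta|^p$ from \eqref{prooff} and \eqref{fisdegreep}, then invoking Caccioppoli, Moser iteration for the zero-extended subsolution, and Serrin/Trudinger's Harnack inequality, all with constants depending only on $p,n,c_*$.
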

\begin{lemma}
\label{uisholder}
Let $\tilde{O}, w, r, \tilde{u}, f$ be as in Lemma \ref{fislegthanu}. Then there is $\alpha'=\alpha'(p, n, c_*)\in (0,1)$ and $ c = c (p, n, c_*),$ such that $\tilde{u}$ has a H\"older continuous representative in $B(w,4r)$ (also denoted $ \ti u$). If $\tilde{w}, \hat{w}\in B(w,r)$ then
\begin{align*}
 |\tilde{u}(\tilde{w})-\tilde{u}(\hat{w})|\leq c\, \left(\frac{|\tilde{w}-\hat{w}|}{r}\right)^{\alpha'} \esssup\limits_{B(w, 2r)} \tilde{u}\, .
\end{align*}
\end{lemma}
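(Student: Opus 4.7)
The plan is to prove Lemma \ref{uisholder} by combining an interior oscillation decay derived from the Harnack inequality in Lemma \ref{fislegthanu} with a boundary oscillation decay that uses the capacity density hypothesis (automatic when $p > n$, assumed when $p = n$). Once both decays are in hand, a standard chaining argument along balls connecting $\tilde w$ and $\hat w$ yields the claimed Hölder estimate.

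First I would establish the interior estimate. For any ball $B(z,2r') \subset \tilde O \cap B(w,4r)$, apply the weak Harnack inequality from Lemma \ref{fislegthanu} to the nonnegative functions $M_{2r'} - \tilde u$ and $\tilde u - m_{2r'}$, where $M_{2r'}, m_{2r'}$ are the essential sup and inf of $\tilde u$ on $B(z,2r')$. This yields the standard oscillation decay
\[
\osc_{B(z,r'/2)} \tilde u \leq \theta_{\mathrm{int}} \osc_{B(z,2r')} \tilde u
\]
for some $\theta_{\mathrm{int}} = \theta_{\mathrm{int}}(p,n,c_*) \in (0,1)$. Iterating gives interior Hölder continuity at every $x \in \tilde O \cap B(w,2r)$ with an exponent depending only on $p,n,c_*$ and a constant scaling correctly with $d(x,\partial \tilde O)$.

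Next I would prove a boundary oscillation decay at any $x_0 \in \partial \tilde O \cap B(w,3r)$. Extending $\tilde u$ by zero outside $\tilde O$, I test the weak formulation of \eqref{flaplace} with a cutoff times a power of $\tilde u$ to obtain a Caccioppoli inequality of the form
\[
\int_{B(x_0,\rho)} |\nabla \tilde u|^p\, \rd x \leq \frac{c}{\rho^p} \int_{B(x_0,2\rho)} \tilde u^p\, \rd x.
\]
Combining this with a capacitary Poincaré inequality (applied to $M_{2\rho} - \tilde u$, which vanishes on the locally uniformly fat set $\partial \tilde O \cap \overline{B}(x_0,\rho)$), I get
\[
\esssup_{B(x_0,\rho)} \tilde u \leq \theta_{\partial} \esssup_{B(x_0,2\rho)} \tilde u
\]
for some $\theta_{\partial} < 1$ depending on $p,n,c_*$ and the fatness constant. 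When $p > n$, the Morrey embedding forces every nonempty compact set to carry positive $p$-capacity with a quantitative lower bound, so the fatness hypothesis is not needed; when $p = n$, the assumed $(n,r_0)$ uniform fatness of $\partial \tilde O \cap \overline{B}(w,4r)$ is precisely what supplies the required capacitary estimate.

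To conclude, for $\tilde w, \hat w \in B(w,r)$ set $\delta = |\tilde w - \hat w|$. If $\min(d(\tilde w,\partial \tilde O), d(\hat w,\partial\tilde O)) \geq \delta$, then a chain of a bounded number of interior balls connects the two points and the iterated interior oscillation decay gives the claim. Otherwise, pick $x_0 \in \partial \tilde O$ nearest the relevant point and compare $\tilde u(\tilde w)$ and $\tilde u(\hat w)$ to $\tilde u(x_0) = 0$ by iterating the boundary decay from scale $r$ down to scale $\delta$. In both cases the bound
\[
|\tilde u(\tilde w) - \tilde u(\hat w)| \leq c\bigl(\delta/r\bigr)^{\alpha'} \esssup_{B(w,2r)} \tilde u
\]
emerges with $\alpha' = \min(\log \theta_{\mathrm{int}}^{-1}, \log \theta_{\partial}^{-1})/\log 4$. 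The main technical hurdle is the capacitary Poincaré inequality underlying the boundary decay in the critical case $p = n$; this is where the uniform fatness of $\partial O \cap B(\hat z, \rho)$ is essential, and without it the result can fail even for bounded $\tilde u$.
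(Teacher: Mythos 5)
Your overall plan --- interior oscillation decay from the Harnack inequality, a boundary decay rate driven by the capacity density of the complement, and a chaining argument --- is the standard route, and it is essentially what the references behind this lemma carry out (the paper gives no proof here; it points to \cite{ALV13} and the regularity literature cited there). However, the step that carries the real content, the boundary decay, does not work as you have written it. First, $M_{2\rho}-\tilde u$ does not vanish on $\partial\tilde O\cap\overline{B}(x_0,\rho)$; it equals $M_{2\rho}$ there. The function that vanishes on the fat set is $\tilde u$ itself, or its truncations $(\tilde u-k)_+$. Second, and more seriously, one application of a Caccioppoli inequality together with a capacitary (Maz'ya-type) Poincar\'e inequality only yields an estimate of the form $\rho^{-n}\int_{B(x_0,\rho)}\tilde u^p\,\rd x\le C\,\esssup_{B(x_0,4\rho)}\tilde u^p$ with $C\ge 1$; it cannot by itself produce a contraction factor $\theta_{\partial}<1$, which is exactly what you need to iterate. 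To obtain $\esssup_{B(x_0,\rho)}\tilde u\le\theta_{\partial}\esssup_{B(x_0,2\rho)}\tilde u$ one needs a further mechanism: either a De Giorgi-type iteration over truncation levels in which the capacity density enters through the Maz'ya inequality at every level, or, cleaner in this setting, a comparison of $\tilde u$ with $M_{2\rho}(1-R)$, where $R$ is the capacitary potential of $(\mathbb{R}^{n}\setminus\tilde O)\cap\overline{B}(x_0,\rho)$ in $B(x_0,2\rho)$, combined with the lower bound $\inf_{B(x_0,\rho)}R\ge c^{-1}>0$ that uniform fatness (automatic for $p>n$, as you correctly note) supplies. Arguments based on Lebesgue density of the complement are unavailable here: the complement may have measure zero (this is precisely the situation of the Cantor sets in Theorem \ref{alv13.2}), so capacity must do all the work.

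Two smaller points. In the final chaining, the case $\min(d(\tilde w,\partial\tilde O),d(\hat w,\partial\tilde O))\ge\delta$ is not finished by interior decay alone: if $\delta\le d\ll r$, iterating the interior oscillation decay only gives a bound by $c(\delta/d)^{\alpha}\operatorname{osc}_{B(\tilde w,d)}\tilde u$, and you still must invoke the boundary decay from scale $r$ down to scale $d$ at a nearest boundary point to convert this into $c(\delta/r)^{\alpha'}\esssup_{B(w,2r)}\tilde u$; the two decays have to be concatenated, not used in disjoint cases. Also, since $f$ in \eqref{prooff} need not be even, $M_{2r'}-\tilde u$ solves the equation with integrand $\eta\mapsto f(-\eta)$ rather than $f$; this is harmless because that integrand satisfies \eqref{prooff} with the same $c_*$, but it should be said when applying the Harnack inequality of Lemma \ref{fislegthanu}. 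With these repairs (and with the constants for $p=n$ allowed to depend on the fatness constants), your argument becomes the standard proof.
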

\begin{lemma}
\label{uismu}
Let $\tilde{O}, w, r, \tilde{u}, f, \tilde{\mu}_{f}$ be as in Lemma \ref{fislegthanu}. Then there exists $c=c(p, n, c_*)\geq 1$ such that
\begin{align*}
\tfrac{1}{c}\, r^{p-n} \tilde{\mu}_{f}(B(w, \tfrac{r}{2})) \leq (\esssup\limits_{B(w,r)} \tilde{u})^{p-1} \leq c\, r^{p-n}\tilde{\mu}_{f}(B(w, 2r)).
\end{align*}
\end{lemma}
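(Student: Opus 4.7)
The strategy is to apply the measure identity \eqref{ast} with two suitable test functions, combined with the structural bounds from \eqref{prooff}, Euler's identity \eqref{fisdegreep}, and Lemma \ref{fislegthanu} to pass between the gradient integral of $\tilde u$ and its essential supremum. Throughout, set $M := \esssup_{B(w, r)} \tilde u$.

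\emph{Left inequality.} Pick $\phi \in C_0^\infty(B(w, r))$ with $\phi \equiv 1$ on $B(w, r/2)$, $0 \leq \phi \leq 1$, and $|\nabla \phi| \leq c/r$. From \eqref{ast},
\[
\tilde\mu_f(B(w, r/2)) \leq \int \phi\, d\tilde\mu_f = -\int \langle \m{D}f(\nabla \tilde u), \nabla \phi\rangle\, dx.
\]
Using $|\m{D}f(\eta)| \leq c|\eta|^{p-1}$ (a consequence of the $p$-homogeneity of $f$ together with continuity of $\m{D}f$ on $S^{n-1}$) and $|\eta|^p \leq c f(\eta)$ (from \eqref{prooff}(c)), Hölder's inequality over $B(w, r)$ gives
\[
\tilde\mu_f(B(w, r/2)) \leq \frac{c}{r}\int_{B(w,r)} |\nabla \tilde u|^{p-1}\, dx \leq c\, r^{n/p-1}\left(\int_{B(w,r)} f(\nabla \tilde u)\, dx\right)^{(p-1)/p}.
\]
Lemma \ref{fislegthanu}, applied at a suitably rescaled radius, yields $\int_{B(w, r)} f(\nabla \tilde u)\, dx \leq c\, r^{n-p} M^p$; substituting and tracking the exponent $-1 + n/p + (n-p)(p-1)/p = n-p$ produces the desired bound.

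\emph{Right inequality.} Take $\eta \in C_0^\infty(B(w, 4r))$ with $\eta \equiv 1$ on $B(w, 2r)$ and $|\nabla\eta| \leq c/r$, and use the test function $\phi = (M - \tilde u)_+ \eta^p$. The key observation is that on $\supp \tilde\mu_f \subset \partial \tilde O$ we have $\tilde u = 0$, so $(M - \tilde u)_+ = M$ there. Inserting $\phi$ into \eqref{ast} and using Euler's identity $\langle \m{D}f(\eta), \eta\rangle = pf(\eta)$ leads to
\[
M\int \eta^p\, d\tilde\mu_f = p\int_{\{\tilde u < M\}} f(\nabla \tilde u)\eta^p\, dx - p\int \langle \m{D}f(\nabla \tilde u), \nabla\eta\rangle (M - \tilde u)_+ \eta^{p-1}\, dx.
\]
The error term on the right is estimated via Young's inequality together with $|\m{D}f(\eta)| \leq c|\eta|^{p-1}$ and Lemma \ref{fislegthanu}; after absorbing a small multiple of $\int f(\nabla \tilde u)\eta^p\, dx$ into the main term, the residual is of size $c M^p r^{n-p}$. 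To close the estimate one needs a complementary \emph{lower} bound $\int_{B(w, r)} f(\nabla \tilde u)\, dx \geq c^{-1} r^{n-p} M^p$; dividing by $M$ and using $\int \eta^p\, d\tilde\mu_f \leq \tilde\mu_f(B(w, 2r))$ then produces $M^{p-1} \leq c\, r^{p-n}\tilde\mu_f(B(w, 2r))$.

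The main obstacle is this lower bound on the gradient integral, which is a Morrey/Hardy-type inequality for a function with zero trace on $\partial \tilde O \cap B(w, 4r)$. For $p > n$ it follows from Morrey's embedding, since points already carry positive $p$-capacity. For $p = n$ the relative capacity of a point vanishes, and the hypothesis that $\partial \tilde O \cap B(w, 4r)$ is locally uniformly $(n, r_0)$-fat is precisely what supplies the positive $n$-capacity required for the corresponding Poincaré-type bound. A final step may be needed to absorb $\esssup_{B(w, 2r)} \tilde u$ back into $M$ via a boundary Harnack argument (again leveraging the fatness condition when $p = n$). The overall plan closely parallels the argument in \cite{ALV13} for the pure $p$-Laplace setting, with the structural bounds from \eqref{prooff} and \eqref{fisdegreep} replacing the explicit computations available when $f(\eta) = |\eta|^p$.
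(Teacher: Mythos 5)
Your treatment of the left-hand inequality is the standard one and is essentially correct: test \eqref{ast} with a cutoff, use $|\m{D}f(\eta)|\leq c|\eta|^{p-1}$ and $|\eta|^{p}\leq c f(\eta)$ (consequences of \eqref{prooff} and \eqref{fisdegreep}), then H\"older and a Caccioppoli estimate. The only point to repair is radius bookkeeping: Lemma \ref{fislegthanu} bounds $\int_{B(w,\rho/2)}f(\nabla\tilde u)\,dx$ by $c\,\rho^{n-p}(\esssup_{B(w,\rho)}\tilde u)^{p}$, so ``rescaling to $2r$'' would put $\esssup_{B(w,2r)}\tilde u$ on the right; instead take the cutoff supported in $B(w,3r/4)$ and obtain the Caccioppoli bound on $B(w,3r/4)$ with the supremum still over $B(w,r)$ (for instance by a covering with small interior balls).

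The right-hand inequality is where the proposal genuinely breaks down. In your identity, the error term is controlled, via H\"older and Caccioppoli on the set where $\nabla\eta\neq 0$, only by $c\,M\,r^{n-p}(\esssup_{B(w,4r)}\tilde u)^{p-1}$, which is at least of the same order $M^{p}r^{n-p}$ as the lower bound you hope to extract from the main term (in the model half-space case the two terms are literally comparable). So after ``absorption'' nothing forces the right-hand side to be positive, let alone bounded below by $c^{-1}M^{p}r^{n-p}$: a lower bound on $\int f(\nabla\tilde u)\eta^{p}\,dx$ of the same size as the error cannot close the estimate, because the two constants are unrelated. The rescue you propose --- absorbing $\esssup_{B(w,2r)}\tilde u$ into $M$ via a boundary Harnack principle --- is unavailable in this generality: for an arbitrary open set satisfying only the fatness hypothesis the ratio $\esssup_{B(w,2r)}\tilde u/\esssup_{B(w,r)}\tilde u$ is unbounded (think of $\tilde O\cap B(w,4r)$ a thin cusp with vertex at $w$), and it is precisely to avoid any such sup-doubling that the lemma is stated with $B(w,r)$ on one side and $B(w,2r)$ on the other. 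Two further slips: with $\eta\equiv 1$ on $B(w,2r)$ and support in $B(w,4r)$ one has $\int\eta^{p}\,d\tilde\mu_f\geq\tilde\mu_f(B(w,2r))$, not $\leq$ (you would need $\supp\eta\subset B(w,2r)$); and for $p=n$ the lower bound $\int_{B(w,r)}f(\nabla\tilde u)\,dx\geq c^{-1}M^{n}$ does not follow from fatness through a Poincar\'e--Maz'ya inequality alone, since at the borderline exponent the supremum of a $W^{1,n}$ function is not controlled by its energy; one must also use that $\tilde u$ solves the equation (interior sup and Harnack estimates) to replace a near-maximum point by a ball of definite radius. Note also that the paper gives no proof of this lemma but appeals to \cite{ALV13}; the proofs there and in the works it relies on obtain the upper bound for the supremum by comparison and capacity arguments exploiting the boundary H\"older continuity (fatness when $p=n$), an additional mechanism that your single test-function identity does not supply.
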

\begin{remark}
The left-hand side of the inequality in Lemma \ref{uismu} is true for any open $\tilde{O}$ and $p \geq n$. However, the right-hand side of this inequality requires uniform fatness when $p=n$ and that is the main reason why the uniform fatness assumption appears in Theorem \ref{alv13.1}.
\end{remark}
\begin{lemma}
\label{localholderfornablau}
Let $\tilde{O}, w, r, \tilde{u}, f$ be as in Lemma \ref{fislegthanu}. Then $ \ti u$ has a representative in $W^{1,p}(B(w,4r))$ with H\"older continuous
derivatives in $\tilde{O}\cap B(w,4r)$. In particular, there exists $\alpha''$, $0<\alpha''<1$, and $c\geq 1$, depending only on $p, n, c_*,$ with
\begin{align*}
\begin{split}
|\nabla \tilde{u}(x)-\nabla \tilde{u}(y)| \leq c \left(\frac{|x-y|}{\hat{r}}\right)^{\alpha''}\, \esssup\limits_{B(\tilde{w},\hat{r})}|\nabla \tilde{u}| \leq
\frac{c}{\hat{r}}\left(\frac{|x-y|}{\hat{r}}\right)^{\alpha''} \esssup\limits_{B(\tilde{w},\hat{r})}\tilde{u}.
\end{split}
\end{align*}
whenever $x,y\in B(\tilde{w}, \hat{r}/2)$, and $B(\tilde{w}, 4\hat{r})\subset \tilde{O}\cap B(\tilde{w}, 4r)$.

Moreover,
\begin{align*}
\int\limits_{B(\tilde{w},\hat{r})}|\nabla \tilde{u}|^{p-2}\sum\limits_{k,j=1}^{n}(\tilde{u}_{x_{k}x_{j}})^{2}\rd x \leq \frac{c}{ \hat{r}^{2}}\int\limits_{B(\tilde{w},2\hat{r})}|\nabla \tilde{u}|^{p}\rd x.
\end{align*}
\end{lemma}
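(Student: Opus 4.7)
The plan is to derive these estimates by following the classical regularity scheme for quasilinear equations of $p$-Laplace type (Uhlenbeck, Tolksdorf, DiBenedetto), adapting the argument given in \cite{ALV13} for $f(\eta)=|\eta|^p$ to the general structural conditions in \eqref{prooff}. A preliminary observation is that, by the homogeneity of $f$ of degree $p$, the uniform convexity assumption \eqref{prooff}(c) on the annulus $1/2<|\eta|<1$ extends by scaling to all $\eta\neq 0$, giving constants $c=c(p,n,c_*)$ with
\begin{equation*}
c^{-1}|\eta|^{p-2}|\xi|^2 \;\leq\; \sum_{j,k=1}^n f_{\eta_j\eta_k}(\eta)\xi_j\xi_k \;\leq\; c\,|\eta|^{p-2}|\xi|^2
\end{equation*}
for all $\xi\in\mathbb{R}^n$. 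Thus the PDE $\Delta_f u=0$ has the same degenerate ellipticity structure as the $p$-Laplacian.

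To handle the degeneracy at $\nabla u=0$, I would regularize: replace $f$ by a sequence $f_\epsilon$ that agrees with $f$ outside a small ball around the origin and is strictly elliptic of order $2$, and solve $\operatorname{div}\mathcal{D}f_\epsilon(\nabla u^\epsilon)=0$ with the same boundary data as $\tilde u$. Standard nondegenerate elliptic theory gives $u^\epsilon\in C^\infty(\tilde O\cap B(w,4r))$. The strategy is to prove both the second-derivative Caccioppoli estimate and the Hölder estimate for $\nabla u^\epsilon$ with constants independent of $\epsilon$, then pass to the limit.

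For the Caccioppoli estimate, I would differentiate the regularized equation in direction $x_\ell$ to obtain
\begin{equation*}
\sum_{j,k=1}^n \frac{\partial}{\partial x_k}\!\left(f_{\eta_j\eta_k}(\nabla u^\epsilon)\,u^\epsilon_{x_\ell x_j}\right)=0,
\end{equation*}
then test against $\phi^2 u^\epsilon_{x_\ell}$ with $\phi\in C_0^\infty(B(\tilde w,2\hat r))$ equal to $1$ on $B(\tilde w,\hat r)$ and $|\nabla\phi|\leq 2/\hat r$. Summing over $\ell$, using the structural bounds above together with Cauchy–Schwarz to absorb the mixed term involving $\nabla\phi$ and $\nabla u^\epsilon$, produces the desired bound on $\int |\nabla u^\epsilon|^{p-2}|\nabla^2 u^\epsilon|^2$ in terms of $\hat r^{-2}\int|\nabla u^\epsilon|^p$. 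For the Hölder continuity of $\nabla u^\epsilon$, I would invoke the Tolksdorf–DiBenedetto scheme: either $u^\epsilon$ is non-degenerate on a given ball (in which case classical linear theory applies) or $|\nabla u^\epsilon|$ oscillates comparably to its maximum, in which case a Moser iteration using the second-derivative Caccioppoli estimate produces a decay of oscillation. This yields a Hölder exponent $\alpha''=\alpha''(p,n,c_*)\in(0,1)$ and constants depending only on $p,n,c_*$. Passing to the limit $\epsilon\to 0$ via weak compactness in $W^{1,p}$ combined with Arzelà–Ascoli applied to the uniform Hölder bound delivers the estimates for $\tilde u$.

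The main obstacle is the bookkeeping required to confirm that all constants depend only on $p,n,c_*$: because \eqref{prooff}(c) is assumed only on an annulus, the bounds on $\mathcal{D}^2 f(\eta)$ away from $|\eta|\approx 1$ must be recovered via homogeneity rather than imposed directly, and in the regularization one must ensure $f_\epsilon$ can be chosen so that its structural constants depend only on $c_*$ (plus a harmless dependence on $\epsilon$ that disappears in the limit). Once this is handled uniformly, the convergence $\nabla u^\epsilon \to \nabla\tilde u$ locally uniformly transfers both the Hölder estimate and the second-order Caccioppoli inequality to $\tilde u$ itself.
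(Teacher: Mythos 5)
Your proposal is correct and follows essentially the same route the paper intends: the paper gives no independent proof, simply citing \cite{ALV13} and the classical $C^{1,\alpha}$/second-derivative regularity theory (Tolksdorf--DiBenedetto--Lieberman), which rests precisely on the structural inequality $c^{-1}|\eta|^{p-2}|\xi|^2\leq \sum f_{\eta_j\eta_k}(\eta)\xi_j\xi_k\leq c|\eta|^{p-2}|\xi|^2$ that you derive from \eqref{prooff} via $p$-homogeneity, followed by regularization, the differentiated equation tested against $\phi^2 u_{x_\ell}$ for the Caccioppoli bound, and a limit passage (the same mollification device the paper itself uses later in Lemma \ref{derofuisbdd}). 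Your flagged bookkeeping point is the only real content beyond the citation, and you handle it in the standard way.
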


\begin{lemma}
\label{ballminusball}

Let $\tilde{O}, w, r, \tilde{u}$ be as in Lemma 2.4. Suppose   for some  $ z \in \rn{n},  t \geq  100 r,    $   that  $ w \in
\partial B ( z, t ) $   and
\[
B ( w, 4r ) \sem \bar B ( z, t ) =  B ( w, 4 r ) \cap \tilde{O}.
\]
Then there exists $\alpha'''= \alpha''' (p, n, f) \in (0,1) $ for which  $\tilde{u} |_{ \tilde{O} \cap B(w,3r)}    $ has  a  $ C^{ 1, \alpha'''} \cap W^{1,p} $  extension to  the closure of    $    B ( w, 3r)  \sem  \bar B( z, t )  $ (denoted $\bar{u}$).  
  Moreover, 
  \[
  \int\limits_{ \tilde{O} \cap B (  w ,  r/2  ) \cap \{ | \nabla \bar{u}  | > 0 \}  } \, | \nabla  \bar{u}  |^{p-2}   \, \sum_{j,k = 1}^n \bar{u}_{x_j x_k}^2  d x   \leq \, \frac{c}{r^{2}} \int\limits_{ \tilde{O} \cap B (w ,  2r) } \, | \nabla \bar{u} |^{p} \, d x
       \]
 and if $ y, \tilde{y} \in  \tilde{O}  \cap B (  w,   r/2 ),  $  then
    \[  \begin{array}{l} 
    \frac{1}{c}\, |\nabla\bar{u}(y)-\nabla\bar{u}(\tilde{y} )|\leq \left(\frac{|y - \tilde{y} |}{ r}\right)^{\alpha'''} \, 
\max_{\tilde{O} \cap \bar B (w, r)} \, |\nabla \bar{u}|        \leq \, \frac{c}{r}\,   \left(\frac{|y  - \tilde{y} |}{r}\right)^{\alpha'''} \, \max_{\tilde{O} \cap B (w, 2r )} \bar{u}.
    \end{array} \] 
\end{lemma}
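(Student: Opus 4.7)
The plan is to reduce the lemma to boundary $C^{1,\alpha}$ regularity on a half-ball by straightening the spherical portion of $\partial\tilde O$, and then to combine Lieberman-type boundary regularity for quasilinear equations of $\Delta_f$-type with the interior estimates already recorded in Lemma \ref{localholderfornablau}.

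Because $t\geq 100r$, the boundary piece $\partial B(z,t)\cap B(w,4r)$ is a $C^\infty$ hypersurface whose principal curvatures are bounded by $1/t\leq 1/(100r)$, so at scale $r$ it is essentially flat. There is thus a $C^\infty$ diffeomorphism $\Phi:B(w,4r)\to B(0,4r')$, $r'\approx r$, carrying $w$ to $0$, the spherical piece to $\{y_n=0\}\cap B(0,4r')$, and $\tilde O\cap B(w,4r)$ to the upper half ball $B^{+}(0,4r')$; after rescaling by $r$, both $\Phi$ and $\Phi^{-1}$ have $C^k$ norms depending only on $n$ and $k$. Setting $v(y)=\tilde u(\Phi^{-1}(y))$, the chain rule shows that $v$ is a nonnegative weak solution on $B^{+}(0,4r')$ of a divergence-form quasilinear equation $\sum_j \partial_{y_j} A_j(y,\nabla v)=0$, where $A(y,\cdot)$ inherits from $\mathcal D f$ the $(p-1)$-growth, $C^1$ smoothness, and uniform convexity-on-annuli structure of \eqref{prooff}, with constants depending only on $p,n,c_*$, and where $v=0$ on $\{y_n=0\}\cap B(0,4r')$.

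Next, I would invoke boundary $C^{1,\alpha}$ regularity theory (Lieberman, Tolksdorf) for weak solutions on a half-ball with zero Dirichlet data on the flat part, which yields $\alpha'''=\alpha'''(p,n,c_*)\in(0,1)$ and a $C^{1,\alpha'''}(\overline{B^{+}(0,3r')})$ extension of $v$ satisfying the required H\"older oscillation estimate on $\nabla v$. The weighted $L^2$ Hessian estimate is then obtained by differentiating the weak equation tangentially in $y_i$, $i<n$, testing against $\zeta^2 v_{y_i}$ with $\zeta$ a cutoff that may reach the flat boundary (legitimate since $v_{y_i}=0$ there for $i<n$), and using the resulting linearized equation to bound $\int|\nabla v|^{p-2}v_{y_iy_j}^2\,dy$; the normal-normal derivative $v_{y_ny_n}$ is then read off from the PDE itself on $\{|\nabla v|>0\}$, where the operator is classically uniformly elliptic. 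Pulling back through $\Phi$ and applying Lemma \ref{fislegthanu} to convert $\max|\nabla \bar u|$ on $B(w,r)$ into $r^{-1}\max \bar u$ on $B(w,2r)$ yields the two displayed estimates in the statement.

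The chief obstacle I anticipate is verifying that the pulled-back operator $A(y,\eta)$ still meets the endpoint structural assumptions \eqref{prooff} with constants depending only on $p,n,c_*$: since $f$ is only assumed uniformly convex on the annulus $1/2<|\eta|<1$, the chain-rule identities must be handled carefully to preserve this structure after multiplication by the (bounded, smooth) Jacobian factors coming from $\Phi$, and one must check that the smooth $y$-dependence introduced by straightening the boundary does not degrade the exponent $\alpha'''$ or the ellipticity constants in Lieberman's theorem. Once this is done, the rest of the argument is routine change-of-variables bookkeeping.
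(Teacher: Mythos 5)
Your proposal is essentially correct, and its backbone --- citing Lieberman's boundary regularity for degenerate quasilinear equations to get the $C^{1,\alpha'''}$ extension and the gradient H\"older estimate --- is exactly what the paper does; the differences are in the packaging and in how the Hessian bound is obtained. The paper applies Lieberman directly to the smooth spherical boundary piece $\partial B(z,t)\cap B(w,4r)$ (no flattening), and, crucially, extracts from a ``careful reading'' of Lieberman the additional fact that $|\nabla \bar{u}|\geq c^{-1}$ near $\partial B(z,t)\cap B(w,7r/2)$; with this non-degeneracy the derivatives $\bar u_{x_l}$ satisfy a \emph{uniformly} elliptic divergence-form equation (as in (\ref{intbypartszlsoln})) near the boundary, so the weighted $L^2$ Hessian estimate follows from a plain Caccioppoli inequality there together with the interior estimate of Lemma \ref{localholderfornablau} away from the boundary. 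You instead flatten the nearly flat sphere (legitimate since $t\geq 100r$, and the pulled-back operator keeps the structure of (\ref{prooff}) up to smooth bounded $y$-dependence, which Lieberman's hypotheses allow) and prove the Hessian bound by tangential difference quotients tested against cutoffs reaching the flat boundary, recovering $v_{y_ny_n}$ from the equation on $\{|\nabla v|>0\}$; this route never needs the gradient lower bound and handles the degenerate weight directly, which is a genuine (and slightly more self-contained) alternative, at the cost of the change-of-variables bookkeeping you flag. Two small corrections: the conversion $\max_{\tilde O\cap \bar B(w,r)}|\nabla\bar u|\leq \frac{c}{r}\max_{\tilde O\cap B(w,2r)}\bar u$ comes from the Lieberman boundary gradient bound together with the interior bound built into Lemma \ref{localholderfornablau}, not from Lemma \ref{fislegthanu} (which is a sup/energy estimate for $\bar u$ itself); and be aware that the paper wants the by-product $|\nabla\bar u|\geq c^{-1}$ near the sphere for later use (e.g.\ in (\ref{3.16}) and (\ref{3.20})), so your proof of this lemma, while sufficient for the statement, does not replace that ingredient elsewhere in the argument.
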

  \begin{proof}   
Lieberman  in \cite{lieberman} essentially   proves   the   above  lemma.   A  careful  reading of  his paper  gives the second  estimate  in this  lemma as well as the fact that   $  | \nabla \bar u |   \geq  c^{-1}  $   in $  B ( \ze,  r/c )  $  whenever  $  \zeta \in  \ar B ( z, t )  \cap   B ( w, 7r/2) $   where  $ c \geq 1 $  depends  only on $ p, n, $ and the structure constants for  $ f.  $   The first estimate   then  follows  from H\"{o}lder continuity of  derivatives,    the fact that  derivatives of  $  \bar u  $     satisfy  a  uniformly elliptic  PDE  in  divergence form   near     $  \ar  B (  z, t ) \cap    B ( w, 3r )     $  (see (\ref{intbypartszlsoln})), and  a  Caccioppoli  inequality.  
\end{proof}

\section{Sub solution estimate} 
\label{subsol}
Let $\tilde{L}$ be defined as in (\ref{Ltilde}) with $2$ replaced by $n$ in the summation. That is,
\begin{align}
\label{Lzeta0}
\tilde{L}\zeta\, = \, \sum\limits_{k,j=1}^{n}\frac{\urpartial }{\urpartial x_{k}}\left(f_{\eta_{j}\eta_{k}}(\nabla \tilde{u}) \frac{\urpartial \zeta}{\urpartial x_{j}}\right).
\end{align}
Let  $ \ti{v}(x)=\log f(\nabla \ti{u}(x))$ for $x\in\tilde{O}\cap B(w, 4r)$. In this section we first show that $\tilde{L}\ti{v}\geq 0$ weakly  in a domain $ \Om\subset \tilde{O} \cap B (w, 4r)$ when $p\geq n$ and  $\nabla\ti{u}\neq 0$ in $\Om$.  To do so we note that  Lemma \ref{localholderfornablau} implies $ \ti{u}$ is locally in $W^{2,2}(\Om)$ so (\ref{flaplace}) holds almost everywhere in $\Om$. It follows that for $l=1,2,\ldots, n$, 
\begin{align}
 \label{intbypartszlsoln}
\begin{split}
0&=\int\limits_{\Omega}\langle \m{D}f( \,\nabla \tilde{u}), \nabla \phi_{x_{l}}\rangle \rd x
=-\int\limits_{\Omega}\sum\limits_{k=1}^{n} \frac{\urpartial (f_{\eta_{k}}(\nabla \tilde{u}))}{\urpartial x_{l}}\, \phi_{x_{k}} \rd x \\
&=-\int\limits_{\Omega}\sum\limits_{k,j=1}^{n} f_{\eta_{k}\eta_{j}}(\nabla \tilde{u})(\ti u_{x_{l}})_{x_{j}} \phi_{x_{k}} \rd x.
\end{split}
\end{align}
whenever $\phi\in C^{\infty}_{0}(\Om)$ and non-negative. Therefore, $\zeta=\tilde{u}_{x_{l}}$, $l=1,\dots, n$, is a weak solution to \eqref{Lzeta0}. From \eqref{fisdegreep} we also have
\begin{align}
\label{Luiszero}
\begin{split}
\int\limits_{\Omega}\sum\limits_{k,j=1}^{n}f_{\eta_{j}\eta_{k}}(\nabla \tilde{u})\tilde{u}_{x_{j}} \phi_{x_{k}}\, \rd x=(p-1)\int\limits_{\Omega}\sum\limits_{k=1}^{n} f_{\eta_{k}}(\nabla \tilde{u}) \phi_{x_{k}}\rd x=0.
\end{split}
\end{align}
From (\ref{Luiszero}) we deduce that $\zeta=\tilde{u}$ is also a weak solution to (\ref{Lzeta0}).
Let $\mathfrak{b}_{kj}=f_{\eta_{k}\eta_{j}}(\nabla \tilde{u})$ and observe that
 for almost every $ x \in \Om, $ where $ \nabla \ti u ( x ) \neq 0, $ 
 \begin{align}
 \label{bijvij}
\mathfrak{b}_{kj}\tilde{v}_{x_{j}}=\frac{\mathfrak{b}_{kj}}{f(\nabla \tilde{u})}\sum\limits_{m=1}^{n}f_{\eta_{m}}(\nabla \tilde{u})\tilde{u}_{x_{m}x_{j}}.
\end{align}
Using (\ref{bijvij}) we find that

\begin{align}
\label{IandII}
 \begin{split}
\int\limits_{\Omega}\sum\limits_{k,j=1}^{n}\mathfrak{b}_{kj}\tilde{v}_{x_{j}}\phi_{x_{k}}\, \rd x
&=\int\limits_{\Omega}\sum\limits_{k,j=1}^{n}\frac{\mathfrak{b}_{kj}}{f(\nabla \ti u)}\sum\limits_{m=1}^{n}f_{\eta_{m}}(\nabla \tilde{u})\tilde{u}_{x_{m}x_{j}}\phi_{x_{k}}\, \rd x\\
&=-\int\limits_{\Omega}\sum\limits_{m,k,j=1}^{n}\frac{\urpartial }{\urpartial x_{k}}\left(\frac{f_{\eta_{m}}(\nabla \tilde{u})}{f(\nabla \tilde{u})}\right)\mathfrak{b}_{kj}\tilde{u}_{x_{m}x_{j}}\, \phi\, \rd x
\end{split}
\end{align}
where to get the last line in (\ref{IandII}) we have used
\begin{align}
 \label{IandIIcomp}
0=\int\limits_{\Omega}\sum\limits_{m,k,j=1}^{n}\mathfrak{b}_{kj}\tilde{u}_{x_{m}x_{j}}\frac{\urpartial }{\urpartial x_{k}}\left(\frac{f_{\eta_{m}}(\nabla \tilde{u})}{f(\nabla \tilde{u})}\phi\right)\rd x.
\end{align}
(\ref{IandIIcomp}) is a consequence of (\ref{intbypartszlsoln}) with $m=l$ and $\phi$ replaced by $\frac{f_{\eta_{m}}(\nabla \tilde{u})}{f(\nabla \tilde{u})}\phi$ as well as the fact that
\[
\frac{f_{\eta_{m}}(\nabla \tilde{u})}{f(\nabla \tilde{u})}\in W^{1,2}_{\mbox{\tiny{loc}}}(\Om).
\]
From (\ref{IandII}) we have
\begin{align}
 \label{IandIIimp}
\begin{split}
\int\limits_{\Omega}\sum\limits_{k,j=1}^{n}\mathfrak{b}_{kj}\tilde{v}_{x_{j}} \phi_{x_{k}}\, \rd x&=-\int\limits_{\Omega}\sum\limits_{m,k,j=1}^{n}\frac{\urpartial }{\urpartial x_{k}}\left(\frac{f_{\eta_{m}}(\nabla \tilde{u})}{f(\nabla \tilde{u})}\right)\mathfrak{b}_{kj}\tilde{u}_{x_{m}x_{j}}\, \phi\, \rd x \\
&=-\int\limits_{\Omega}(I'+I'')\phi\rd x
\end{split}
\end{align}
where (after taking the $x_{k}$ derivative of the term)
\begin{align}
\label{I'andI''}
\begin{array}{l}
I'=\sum\limits_{m,j,k,l=1}^{n}\frac{1}{f(\nabla \tilde{u})}\, \mathfrak{b}_{ml}\mathfrak{b}_{kj}\tilde{u}_{x_{l}x_{k}}\tilde{u}_{x_{m}x_{j}}, \\
I''=-\frac{1}{f^{2}(\nabla \tilde{u})}\,\sum\limits_{m,j,k,l=1}^{n}\mathfrak{b}_{kj}f_{\eta_{m}}(\nabla \tilde{u})f_{\eta_{l}}(\nabla \tilde{u})\tilde{u}_{x_{l}x_{k}}\tilde{u}_{x_{m}x_{j}}.
\end{array}
\end{align}
To simplify computation in \eqref{I'andI''} we use matrix notation. If $f=f(\nabla \tilde{u})$, $f_{\eta_{k}}(\nabla \tilde{u})=\mathfrak{b}_{k}$, $1\leq k \leq n$, then we first observe by reordering the terms in \eqref{I'andI''} that 
\begin{align*}
(I'+I'') f=  \sum\limits_{m,j,k,l=1}^{n} [ \mathfrak{b}_{nl} \tilde{u}_{x_l x_k} \mathfrak{b}_{kj} \tilde{u}_{x_j x_m} - \frac{1}{f} \, \mathfrak{b}_{l} \tilde{u}_{x_l x_k} \mathfrak{b}_{kj}\tilde{u}_{x_j x_m} \mathfrak{b}_{m} ] .
\end{align*}
Let $A=(\tilde{u}_{x_{i}x_{j}})$ and $B=(\mathfrak{b}_{ij})$, then for almost every $ x \in \Om, $
\begin{align}
 \label{add1}
(I'+I'')f=\mbox{tr}\, \, (BA)^2 -\frac{1}{f}\frac{1}{(p-1)^2} \nabla \tilde{u} BABAB (\nabla \tilde{u})^t
\end{align}
where we have used (\ref{fisdegreep}) to replace $\mathfrak{b}_{l}$. We look at
$$
\frac{ \zeta \, BABAB \, \zeta^t }{\zeta \, B \,\zeta^t} = \frac{\text{tr}( \zeta \, BABAB \, \zeta^t)}{ \text{tr}( \zeta \, B \, \zeta^t)}.
$$ 
Observe from \eqref{prooff} that $B$ is positive definite symmetric, $A$ is symmetric, and from \eqref{flaplace} that $\mbox{tr}(AB)=\mbox{tr}(BA)=0$. Using these facts we see there exists  $\m{S}$ an orthogonal matrix so that $\m{S}^t B \m{S} = B_d$ is diagonal. Let $B'_d= B_d^{1/2}$ be the obvious square root of each component of $B_d$ so that $B'_d \, B'_d = B_d$. With $A_1 = \m{S}^t A \m{S}$, it follows that
\begin{align*}
\frac{ \zeta \, BABAB \, \zeta^t }{\zeta \, B \,\zeta^t} &=\frac{\zeta \, \m{S} \m{S}^t B\m{S} \m{S}^tA\m{S} \m{S}^tB\m{S} \m{S}^tA\m{S} \m{S}^tB \m{S} \m{S}^t\, \zeta^t }{\zeta \,\m{S} \m{S}^t B \,\m{S} \m{S}^t \zeta^t} \\
&= \frac{ \zeta \m{S} \, B_d A_1 B_d A_1 B_d \, \m{S}^t \zeta^t }{ \zeta \m{S} \, B_d \, \m{S}^t \zeta^t}.
\end{align*}
If  $\xi = \zeta \m{S} \neq 0,$ then
$$
\frac{ \zeta \, BABAB \, \zeta^t }{\zeta \, B \,\zeta^t} =
\frac{\xi \, B_d A_1 B_d A_1 B_d \, \xi^t }{ \xi \, B_d \, \xi^t} =
\frac{\xi B'_d\, B'_d A_1 B'_d B'_d A_1 B'_d \, B'_d \xi^t }{\xi B'_d \, B'_d \xi^t}.
$$
Set $y = \xi B'_d \neq 0$, $E = B'_d A_1 B'_d$, and note that $E$ is symmetric
as $B'_d$, $A_1=\m{S}^t A \m{S}$, and $A$ are symmetric;
$$
\frac{ \zeta \, BABAB \, \zeta^t }{\zeta \, B \,\zeta^t} =
\frac{y \, B'_d A_1 B'_d B'_d A_1 B'_d \, y^t }{ y \, y^t}=
\frac{y \, E E \, y^t}{ y \, y^t}.
$$
Now one can easily prove the following properties of trace;
\begin{align*}
&(i)\, \, \text{tr}{(FGH)}=F_{ij}G_{jk}H_{ki} = H_{ki}F_{ij}G_{jk} =\text{tr}(HFG),\\
&(ii)\, \, \text{tr}(P^{-1} G P) = \text{tr}(G)
\end{align*}
whenever $F,G,H$ are matrices. Here (ii) follows easily from property (i) whenever $P$ is an orthogonal matrix. From these properties (i)-(ii) we have
\begin{align*}
\text{tr}(E) = \text{tr}(B'_d A_1 B'_d) = \text{tr}(B'_dB'_d A_1 ) = \text{tr}(B_d A_1)
=\text{tr}( \m{S}^t B \m{S} \m{S}^t A \m{S}) = \text{tr}(BA).
\end{align*}
Therefore, we have $\text{tr}(E)=\text{tr}(BA)=\text{tr}(AB) =0$. Similarly,
$$
\text{tr}(E^2) = \text{tr}{( (AB)^2 )}
$$
Now diagonalize $E$ using another orthogonal matrix $\m{S}_{1}$, so that $\m{S}_{1}^t E \m{S}_{1} = E_d$ with the $ij$th entries given by
$(E_d)_{ij} = e_i \delta_{ij}.$ Then
\begin{align}
\label{trofeiszero}
\begin{split}
&\text{tr}(E)= \text{tr}(E_d) = \sum\limits_{i=1}^{n} e_i =0,\\
&\text{tr}(E^2) = \text{tr}( \m{S}_{1}^t E \m{S}_{1} \m{S}_{1}^t E \m{S}_{1}) = \text{tr}(E_d^2) = \sum\limits_{i=1}^{n} e_i^2.
\end{split}
\end{align}
Moreover,
$$
\frac{ \zeta \, BABAB \, \zeta^t }{\zeta \, B \,\zeta^t} = \frac{y \m{S}_{1} \m{S}_{1}^t E\m{S}_{1} \m{S}_{1}^t E \m{S}_{1} \m{S}_{1}^ty^t}{ y\m{S}_{1} \m{S}_{1}^ty^t}
$$
so that with $z = y\m{S}_{1}\neq 0$ we also have
\begin{align}
\label{BABAB}
\frac{ \zeta \, BABAB \, \zeta^t }{\zeta \, B \,\zeta^t} = \frac{z E_d E_d z^t}{ z z^t} =
\frac{\sum\limits_{i=1}^{n} e_i^2 z_i^2} {\sum\limits_{i=1}^{n} z_i^2}.
\end{align}
Let $\kappa=z/|z| $ so that $\kappa$ is a unit vector, then \eqref{BABAB} implies
\begin{align}
\label{0BABAB}
0\leq \frac{ \zeta \, BABAB \, \zeta^t }{\zeta \, B \,\zeta^t} =\sum_{i=1}^{n} e_i^2 \kappa_i^2.
\end{align}
Without loss of generality assume that $e_1^2 $ is the largest of the $e_k^2$ then considering all possible unit vectors $\kappa$ in \eqref{0BABAB} we see that
\begin{align}
\label{e12}
\frac{ \zeta \, BABAB \, \zeta^t }{\zeta \, B \,\zeta^t} \leq \sup_{|\kappa|=1} \sum\limits_{i=1}^{n} e_i^2 \kappa_i^2 =e_1^2 .
\end{align}
Combining \eqref{add1}, \eqref{trofeiszero}, \eqref{0BABAB}, and \eqref{e12} we have
\begin{align}
\label{I''II''f}
(I'+I'')f = \text{tr}( BA)^2 - \frac{p}{p-1} \frac{ \zeta \, BABAB \, \zeta^t }{\zeta \, B \,\zeta^t}
\geq \sum\limits_{i=1}^{n} e_i^2 - \frac{p}{p-1} e_1^2.
\end{align}
Now we can use (\ref{trofeiszero}) to get
\begin{align}
\label{e1e12}
e_1 = -(\sum\limits_{i=2}^{n} e_i)\; \; \mbox{and}\; \; e_1^2 = (\sum\limits_{i=2}^{n} e_i)^2 \leq (n-1) \sum\limits_{i=2}^{n} e_i^2.
\end{align}
Using (\ref{e1e12}) in (\ref{I''II''f}) we have
\begin{align}
\label{final-1}
(I'+I'')f \geq e_1^2 +\frac{1}{n-1}e_1^2 - \frac{p}{p-1} e_1^2 = e_1^2 \left( \frac{n}{n-1} - \frac{p}{p-1}\right)
\end{align}
Finally $\frac{t}{t-1}$ is decreasing on $t > 1$ so that for $p\geq n$ we see that $(I'+I'')f \geq 0$. Combining (\ref{IandIIimp}) and (\ref{final-1}) we deduce that
\begin{align}
 \label{finalstage}
\begin{split}
\int\limits_{\Omega}\sum\limits_{k,j=1}^{n}\mathfrak{b}_{kj}\tilde{v}_{x_{j}} \phi_{x_{k}}\, \rd x &=-\int\limits_{\Omega}(I'+I'')\phi\rd x \\
& \leq - \left( \frac{n}{n-1} - \frac{p}{p-1}\right)\int\limits_{\Omega}\frac{e_1^2 (x) }{f(\nabla \tilde{u}(x)) } \phi\rd x\\
&\leq 0.
\end{split}
\end{align}
whenever $\phi\in C_{0}^{\infty}(\Omega)$ and non-negative. It follows from (\ref{finalstage}) that $\tilde{L}\tilde{v}\geq 0$ weakly in $ \Om $ when $p\geq n$. 

Let $ \de_{jk} $ denote the Kronecker delta in  the following lemma.  
\begin{lemma}
\label{v'issolnlemma}
Let $\tilde{O}, w, r, \tilde{u}, f$ be as in Lemma \ref{fislegthanu}. Let $-\infty<\he\leq- 1$. Let $\tilde{L}$ be defined as in (\ref{Lzeta0}) and $\tilde{v}=\log f(\nabla \tilde{u})$ when $ x\in\tilde{O} \cap B(w,4r)$ and $\nabla \tilde{u}(x) \neq 0$.
Let $f_{\eta_{j}\eta_{k}}= \de_{jk} $ when $\nabla \tilde{u}(x)=0$ for $1\leq j,k\leq n$. If $v'= \max \{\log f(\nabla \tilde{u}), \he \}$ then $\zeta=v'$ is locally a weak sub solution to $\tilde{L}\zeta=0$ in $ \tilde{O}\cap B(w, 4r)$.
\end{lemma}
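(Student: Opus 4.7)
The plan is to extend the subsolution property of $\tilde{v}=\log f(\nabla \tilde{u})$ from the open set $A:=\{x\in \tilde{O}\cap B(w,4r):\log f(\nabla\tilde{u}(x))>\theta\}$ to the truncation $v'=\max(\tilde v,\theta)$ via a standard cutoff approximation. First I would observe that $A$ is open by continuity of $\nabla \tilde{u}$ (Lemma \ref{localholderfornablau}). Since $\theta\leq -1$, on $A$ we have $f(\nabla \tilde{u})>e^{\theta}$, so $|\nabla\tilde{u}|$ is bounded below by a positive constant on compact subsets of $A$; together with the upper bound from Lemma \ref{fislegthanu}, this makes the coefficient matrix $(\mathfrak{b}_{kj})=(f_{\eta_j\eta_k}(\nabla\tilde{u}))$ uniformly elliptic with bounded entries on such sets. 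Lemma \ref{localholderfornablau} then upgrades $\tilde u$ to $W^{2,2}_{\mathrm{loc}}(A)$, so $\tilde v\in W^{1,2}_{\mathrm{loc}}(A)$, and the calculation just carried out in the excerpt (culminating in \eqref{finalstage}) applies with $\Omega=A$, giving
\[
\int_{A}\sum_{k,j=1}^{n}\mathfrak{b}_{kj}\,\tilde{v}_{x_{j}}\,\psi_{x_{k}}\,dx\leq 0\qquad\text{for every non-negative }\psi\in C_{0}^{\infty}(A),
\]
which extends by density to non-negative $\psi\in W^{1,2}_{0}(A)$ of compact support.

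Next, for a non-negative $\phi\in C_{0}^{\infty}(\tilde{O}\cap B(w,4r))$ and $\epsilon>0$, I would introduce the Lipschitz truncation
\[
\chi_{\epsilon}(x):=\min\!\left\{1,\;\epsilon^{-1}\bigl(\log f(\nabla\tilde{u}(x))-\theta\bigr)^{+}\right\},
\]
which vanishes off $A$, equals $1$ on $\{\log f(\nabla\tilde{u})\geq\theta+\epsilon\}$, and satisfies $(\chi_{\epsilon})_{x_{k}}=\epsilon^{-1}\tilde v_{x_{k}}$ on the strip $S_{\epsilon}:=\{\theta<\log f(\nabla\tilde{u})<\theta+\epsilon\}$ and zero elsewhere. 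Then $0\leq\phi\chi_{\epsilon}\in W^{1,2}_{0}(A)$ has compact support, and testing the displayed inequality above against $\phi\chi_{\epsilon}$ yields
\[
\int_{A}\chi_{\epsilon}\sum_{k,j}\mathfrak{b}_{kj}\tilde{v}_{x_{j}}\phi_{x_{k}}\,dx\;\leq\;-\frac{1}{\epsilon}\int_{S_{\epsilon}}\phi\sum_{k,j}\mathfrak{b}_{kj}\tilde{v}_{x_{j}}\tilde{v}_{x_{k}}\,dx\;\leq\;0,
\]
where the second inequality follows from the ellipticity of $(\mathfrak{b}_{kj})$ and $\phi\geq 0$. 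Letting $\epsilon\to 0$ via dominated convergence (the integrand $\mathfrak{b}_{kj}\tilde{v}_{x_{j}}\phi_{x_{k}}$ lies in $L^{1}(A\cap\supp\phi)$ thanks to the uniform ellipticity on compact subsets of $A$ and the $W^{2,2}$ bound from Lemma \ref{localholderfornablau}), one obtains
\[
\int_{A}\sum_{k,j}\mathfrak{b}_{kj}\,\tilde{v}_{x_{j}}\,\phi_{x_{k}}\,dx\leq 0.
\]

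To conclude, I would note that $v'=\tilde{v}$ on $A$ and $v'\equiv\theta$ off $A$, so $\nabla v'=0$ a.e.\ on the complement of $A$; in particular, the convention $\mathfrak{b}_{jk}=\delta_{jk}$ on $\{\nabla\tilde{u}=0\}\subset\{v'=\theta\}$ contributes nothing. Therefore
\[
\int\sum_{k,j}\mathfrak{b}_{kj}\,v'_{x_{j}}\,\phi_{x_{k}}\,dx=\int_{A}\sum_{k,j}\mathfrak{b}_{kj}\,\tilde{v}_{x_{j}}\,\phi_{x_{k}}\,dx\leq 0
\]
for every non-negative $\phi\in C_{0}^{\infty}(\tilde{O}\cap B(w,4r))$, which is precisely the assertion that $v'$ is locally a weak subsolution of $\tilde{L}\zeta=0$. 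The main technical obstacle is verifying that $v'\in W^{1,2}_{\mathrm{loc}}$ on all of $\tilde{O}\cap B(w,4r)$ with the correct vanishing of $\nabla v'$ on $\{v'=\theta\}$ and justifying the $\epsilon\to 0$ passage despite potential coefficient degeneracy near $\partial A$; both rest on the strict positive lower bound for $|\nabla\tilde{u}|$ on compact subsets of $A$ (a consequence of $\theta\leq -1$ together with the non-degeneracy of $f$ away from the origin) and on the weighted second derivative estimate in Lemma \ref{localholderfornablau}.
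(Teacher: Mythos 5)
Your proof is correct and follows essentially the same route as the paper: both start from the weak inequality $\tilde{L}\tilde{v}\geq 0$ established where $\nabla\tilde{u}\neq 0$ (the computation ending in \eqref{finalstage}) and then pass to $v'$ by testing against $\phi$ times a monotone cutoff built from $v'$ itself, discarding the cutoff-gradient term by positive definiteness of $(f_{\eta_{j}\eta_{k}})$ and taking a limit. The only difference is cosmetic: the paper uses the power-type cutoff $g=(\max\{v'-\theta-\varepsilon_{1},0\}+\varepsilon_{2})^{\varepsilon_{3}}-\varepsilon_{2}^{\varepsilon_{3}}$ and lets $\varepsilon_{1},\varepsilon_{2},\varepsilon_{3}\to 0$ successively, whereas you use the piecewise-linear truncation $\chi_{\epsilon}$ with a single limit (and the lower bound $f(\nabla\tilde{u})>e^{\theta}$ in fact gives $|\nabla\tilde{u}|$ bounded below uniformly on all of $A$, which is what justifies your density and limit steps near $\partial A$).
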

\begin{proof} 
From Lemma \ref{localholderfornablau} we see that $v'$ is locally in $ W^{1,2} ( \tilde{O}\cap B ( w, 4r ) ). $ Given $ \varepsilon_{1}, \varepsilon_{2}, \varepsilon_{3}>0$, small, define
\[
g(x):= (\max\{v'(x)-\he-\varepsilon_{1},0\}+\varepsilon_{2})^{\varepsilon_{3}}-\varepsilon_{2}^{\varepsilon_{3}},\, \, x\in\tilde{O}\cap B(w,4r).
\]
It follows from (\ref{prooff}) and $\tilde{L}v'\geq 0$ weakly at $x\in\tilde{O}\cap B(w, 4r)$ when $v'(x)\neq\he$ (almost everywhere), that 
\begin{align}
\label{v'issoln}
\begin{split}
0 &\leq -\sum_{j,k=1}^{n}\, \, \int\limits_{\tilde{O}\cap B(w,4r)} f_{\eta_{j}\eta_{k}}(\nabla \tilde{u})(\phi g)_{x_j} v'_{x_k} \rd x \\
&\leq -\sum_{j,k=1}^n \, \, \int\limits_{\tilde{O}\cap B(w, 4r)} g f_{\eta_j\eta_{k}}(\nabla \tilde{u}) \phi_{x_j} v'_{x_k} \rd x.
\end{split}
\end{align}
whenever $\phi \in C_0^\infty ( \tilde{O} \cap B ( w, 4r ) )$ and non-negative. Using (\ref{v'issoln}), the bounded convergence theorem, and letting first $\varepsilon_{1}\to 0$, then $\varepsilon_{2}\to 0$, and finally $\varepsilon_{3} \to 0$, we get Lemma \ref{v'issolnlemma} as desired.
\end{proof}
\section{Advanced Regularity Results}
\label{advancedregres}
In this section we begin the proof of Theorem \ref{alv13.1} by proving three lemmas. To this end, let $O, f, u, \hat z, \rho, \mu_{f}, p, n$ be as in Theorem \ref{alv13.1}.
\begin{lemma}
\label{prop}
There exists a constant $c=c(p, n, c_*)$ and a set $Q\subset \partial O\cap B(\hat z, \rho)$ such that
\[
\mu_{f}((\partial O\cap B(\hat z, \rho))\setminus Q)=0.
\]
Moreover, for every $w\in Q$ there exists arbitrarily small $r=r(w)$, $0<r\leq 10^{-10}$, such that
\[
 \overline{B}(w, 100 r ) \subset B ( \hat z, \rho ) \; \mbox{and}\; \mu_{f} ( B (w, 100 r ) ) \leq c\, \mu_{f}( B(w, r)).
\]
\end{lemma}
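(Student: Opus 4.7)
Lemma \ref{prop} is essentially a measure-theoretic fact about locally finite Borel measures on $\mathbb{R}^n$; my plan uses only local finiteness of $\mu_f$, so the constant $c$ ends up depending only on $n$ (the parameters $p, c_*$ are carried along merely for uniformity of notation). I would argue by contradiction, combining iteration of the hypothesized doubling failure with a Vitali covering estimate.

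First I would take $c > 100^n$ (for concreteness, $c = 2 \cdot 100^n$) and let $E \subset \partial O \cap B(\hat z,\rho)$ be the set of $w$ for which no admissible $r \in (0, 10^{-10}]$ satisfies $\mu_f(B(w,100r)) \leq c\,\mu_f(B(w,r))$ at arbitrarily small scales. For each such $w$ there is a threshold $\delta(w) > 0$ with $\mu_f(B(w,100r)) > c\,\mu_f(B(w,r))$ for every $r \in (0, \delta(w))$; the containment $\overline{B}(w,100r) \subset B(\hat z,\rho)$ is automatic for $r$ small in terms of $w$. Next I would stratify
\[
E = \bigcup_{j \geq 1} E_j, \qquad E_j := \bigl\{w \in E : \delta(w) > 1/j\bigr\} \cap \overline{B}(\hat z, \rho - 200/j),
\]
and reduce to showing $\mu_f(E_j) = 0$ for each $j$ with $\rho > 200/j$; setting $Q := (\partial O \cap B(\hat z,\rho)) \setminus E$ will then give the lemma.

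The crux is the following. Iterating the doubling failure yields $\mu_f(B(w,100^k r)) > c^k \mu_f(B(w,r))$ whenever $w \in E_j$ and $100^{k-1} r < 1/j$. For each $k \geq 1$, I would fix $r_0 > 0$ with $5 \cdot 100^{k-1} r_0 < 1/j$ and apply Vitali's $5r$-covering lemma to $\{B(w, r_0) : w \in E_j\}$ to extract a countable disjoint subfamily $\{B(w_i, r_0)\}_{i \in I}$ with $w_i \in E_j$ and $E_j \subset \bigcup_i B(w_i, 5r_0)$. A standard volume-packing argument based on the disjointness of $\{B(w_i, r_0)\}$ shows that each point of $\mathbb{R}^n$ lies in at most $(5 \cdot 100^k + 1)^n \leq (6 \cdot 100^k)^n$ of the enlarged balls $\{B(w_i, 5 \cdot 100^k r_0)\}$; and since each such enlarged ball has radius $< 100/j$ and center in $\overline{B}(\hat z, \rho - 200/j)$, it sits inside the compact set $\overline{B}(\hat z, \rho - 100/j)$, whose $\mu_f$-mass is finite. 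Combining,
\[
c^k \mu_f(E_j) \leq \sum_{i} \mu_f\bigl(B(w_i, 5 \cdot 100^k r_0)\bigr) \leq (6 \cdot 100^k)^n \, \mu_f\bigl(\overline{B}(\hat z, \rho - 100/j)\bigr),
\]
so $\mu_f(E_j) \leq 6^n (100^n/c)^k \mu_f\bigl(\overline{B}(\hat z, \rho - 100/j)\bigr)$. Since $c > 100^n$, letting $k \to \infty$ forces $\mu_f(E_j) = 0$. The delicate point, essentially the only one, is this calibration of $c$: the exponential gain $c^k$ from iterated doubling failure must dominate the dimensional loss $(6 \cdot 100^k)^n$ from the bounded-overlap count on the enlarged balls, which is precisely why we need $c > 100^n$.
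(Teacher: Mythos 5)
Your proof is correct. You and the paper begin identically: pass to the complement set on which the doubling inequality fails at every sufficiently small radius, and iterate that failure to get geometric growth $c^k$ across the scales $100^k r$. The difference lies in how the contradiction with local finiteness is extracted. The paper turns the iteration into the pointwise statement $\lim_{t\to 0}\mu_f(B(x,t))/t^{n+1}=0$ on the bad set $\Theta$, concludes that $\mu_f|_{\Theta}$ is absolutely continuous with respect to $\mathcal{H}^{n+1}$, and finishes with $\mathcal{H}^{n+1}(\mathbb{R}^n)=0$; this is shorter but quotes the standard comparison between upper densities and Hausdorff measures. You instead stratify the bad set by the threshold scale and by distance to $\partial B(\hat z,\rho)$ and run an explicit Vitali $5r$-covering with a bounded-overlap count, which is fully self-contained, isolates that only local finiteness of $\mu_f$ on compact subsets of $B(\hat z,\rho)$ is used, and makes explicit that the constant may be taken $c=2\cdot 100^{n}$, depending only on $n$ (the paper's route needs roughly $c>2\cdot 100^{n+1}$, likewise only $n$-dependent despite the stated $c(p,n,c_*)$). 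The calibration is the same in both arguments: the gain $c^k$ from iterated doubling failure must beat the volume-type factor $\approx 100^{nk}$ (in the paper, $100^{(n+1)k}$), which is exactly your condition $c>100^{n}$. A minor remark: the paper also records, via Lemma \ref{uismu}, that $\mu_f(B(x,t))>0$ at boundary points, which it needs so that its $\liminf$ of ratios defining $\Theta$ is meaningful; your formulation with products rather than quotients sidesteps this point.
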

\begin{proof}
It follows from Lemma \ref{uismu} that $\mu_{f}(B(x, t))> 0$ whenever $x\in \partial O$ and $\partial O\cap B(x,t)\subset \partial O \cap B(\hat z, \rho)$. We show for $ c > 0 $ large enough that $\mu_{f}(\Theta)=0$ where
\[
\Theta := \left\{x \in \partial O \cap B (\hat z, \rho):\;  \liminf\limits_{t \to 0 }\, \,  \frac{\mu_{f} ( B ( x,100 t) ) }{\mu_{f} ( B ( x, t ) )} \geq c \right\}.
\]
Then the desired set $Q$ in Lemma \ref{prop} will be the complement of $\Theta$, i.e, $Q=(\partial O \cap B (\hat z, \rho))\setminus \Theta$. To show that $\mu_{f}(\Theta)=0$, we first see from the definition of $\Theta$ that for every $x\in\Theta$ there exists $t_{0}=t_{0}(x)$ with
\begin{align}
\label{iterate}
(c/2)\mu_{f}(B(x,t)) \leq \mu_{f}(B(x, 100t))\;\mbox{for every}\; t\in (0, t_{0}).
\end{align}
Then iterating (\ref{iterate}) we obtain
\[
\lim\limits_{t\to 0}\frac{\mu_{f}(B(x,t))}{t^{n+1}}=0 \; \mbox{whenever}\; x\in\Theta
\]
provided $c$ in (\ref{iterate}) is large enough. It follows that $ \mu_f |_{\Theta}  $ is absolutely continuous with respect to $ \rh^{n+1} $ measure. Since ${\rh}^{n+1} ( \rn{n})=0$ we conclude from our earlier remark that Lemma \ref{prop} is true.
\end{proof}
Next using translation and dilation invariance of (\ref{flaplace}), we work in a different domain. To this end, let
\[
w \in Q\subset\partial O\cap B(\hat z, \rho)
\]
be fixed and let $r=r(w)$ be a corresponding radius as in Lemma \ref{prop}. We first set
\[
u'(x):=\frac{u(w+rx)}{\esssup\limits_{B(w,10r)} u}\; \mbox{when}\; w+rx\in B(\hat z,\rho)
\]
and define
\[
\Omega':=\{x:\; w+rx\in O \cap B(\hat z,\rho)\}.
\]
We observe that $u'$ is a weak solution to (\ref{flaplace}) in $\Omega'$ as (\ref{flaplace}) is invariant under translation and dilation. Moreover, $u'>0$ is continuous in $B(\zeta, \rho/r)$ with $u'\equiv 0$ on $B(\zeta, \rho/r)\setminus\Omega'$ provided that $\zeta=(\hat z-w)/r$. As in (\ref{ast}), there exists a finite Borel measure $\mu'_{f}$ on $\mathbb{R}^{n}$ with support in $\partial\Omega'\cap \overline{B}(\zeta, \rho/r)$ associated with $u'$.

We also note that
\[
\mu'_{f}(E)=\frac{r^{p-n}}{\left(\esssup\limits_{B(w,10r)} u\right)^{p-1}}\, \mu_{f}(\Xi(E))
\]
whenever $E$ is a Borel set and $\Xi(E):=\{w+rx:\; x\in E\}$.

As \eqref{flaplace} is invariant under translation and dilation without loss of generality we can assume that $w=0$, $r=1$ with $B(0,100)\subset B(\hat{z},\rho)$. From Lemmas \ref{uismu} and \ref{prop}, we obtain for some $c=c(p,n,c_*)\geq 1$ and $2\leq t\leq 50$ that
\begin{align}
\label{u'ismu}
c^{-1} \leq \mu'_{f} ( B (0, 1) ) \leq \esssup\limits_{B(0, 2 )}u' \leq \esssup\limits_{B (0, t )} u' \leq  c\, \mu'_{f} ( B (0, 100)) \leq c^2 .
\end{align}

By definition of $u'$ and H\"older continuity of $u$ near $\partial O$, it is easily seen that there exists some $\tilde{z}\in \partial B(0,10)$ with $u'(\tilde{z})=1$, and
\begin{align}
\label{d1c}
c^{-1}_{-}\leq d(\tilde{z}, \partial\Omega')\; \mbox{for some}\; c_{-} = c_{-}(p,n,c_*) \geq 1.
\end{align}

Let $M$ be a large number where we allow $M$ to vary but shall fix it to satisfy several conditions after (\ref{3.35}). After that we choose $ s = s(M)>0$ sufficiently small with $0<s<< e^{-M}$.  Let $\delta, \delta'$ be given such that $0<\delta'<\min (\delta, 10^{-5})$ and choose $M>0$ so large that
  \begin{align}
       \label{Mandmeasure}
       \mbox{if}\; \mu'_{f} (B(z, t))= M t^{n-1}\; \mbox{for some}\; t = t (z)\leq 1\; \mbox{then}\; t \leq \delta'
       \end{align}
       where $z\in\partial\Omega'\cap\overline{B}(0, 15)$. Existence of such $M=M(\delta')\geq 1$ follows from \eqref{u'ismu}. Following \cite{W93}, we observe from (\ref{Mandmeasure}) for each $z\in\partial\Omega'\cap \overline{B}(0,15)$ that there exists a largest $t$ with $s \leq t \leq 1$ such that either
\begin{align} 
\label{stoptime} 
\begin{array}{l}
(a) \;\mu'_f ( B(z, t) ) = M t^{n-1}\;, t > s  \\
\mbox{ or } \\
(b) \; t = s. 
\end{array}
\end{align}

  Using the Besicovitch covering theorem (see \cite{Ma95}) we now obtain a covering $\{ B ( z_k, t_k ) \}_{k=1}^N $
   of  $ \partial \Omega'\cap \overline{B}(0, 15)$, where $ t_k $ satisfies either (a) or (b) in \eqref{stoptime}.
Then each point of $\bigcup_{k=1}^N B ( z_k, t_k ) $  lies in at most $ c = c (n) $ of $\{ B ( z_k, t_k ) \}_{k=1}^N $.
Let $\m{G} = \m{G}
 _M$ and $\m{B}= \m{B}_M$ be the set of all balls in this covering for which (a) and (b) in \eqref{stoptime} hold respectively.

Let $ c_{-} $ and $\ti z, $ be as in  (\ref{d1c}) and set $ r_1 = (8 c_{-})^{-1}$. Choosing $ \delta' $ smaller (so $M$ larger) if necessary we may assume, thanks to (\ref{Mandmeasure}), that
     \begin{align}
     \label{unionofballs}
     \bigcup_{k=1}^N \overline{B} ( z_k, 6 t_k )\cap B ( \ti z, 6r_1 ) = \es.
     \end{align}
      Also put
      \[
      \Omega'' = \Omega' \cap B (0, 15 ) \setminus \bigcup_{k=1}^N \overline{B}( z_k, t_k )\; \mbox{and}\; D=\Omega'' \sem \overline{B}(\ti z, 2r_1).
       \]
Let $u''$ be a positive weak solution to (\ref{flaplace}) in $D$ with continuous boundary values,
   \begin{displaymath}
    u''(x)\equiv\left\{
    \begin{array}{cl}
    0&\; \mbox{when }\; x\in \ar \Omega''\\
    \essinf\limits_{\overline{B}(\ti z, 2r_1)} u' &\; \mbox{when} \; x\in\ar B ( \ti z, 2 r_1 ).
    \end{array}
    \right.
   \end{displaymath}
We extend $ u''$ continuously to $\overline{B}(0, 15)$ (also denoted $u''$) by putting
\begin{displaymath}
 u''(x)\equiv\left\{
 \begin{array}{cl}
 0&\; \mbox{when } \; x\in\overline{B}(0, 15) \setminus \Omega''\\
    \essinf\limits_{\overline{B}(\ti z, 2r_1)} u' &\; \mbox{when} \; x\in\overline{B} ( \ti z, 2 r_1 ).
 \end{array}
 \right.
\end{displaymath}
We note that $ u'' \leq u' $ on $ \partial D $ so by the maximum principle for weak solutions to (\ref{flaplace}) we have
 $ u''\leq u' $ in $D$.  Also, $\partial D$ is locally $(n,r'_0)$ uniformly fat where $ r_0' $
depends only on $n$ and $ r_0 $ in Theorem \ref{alv13.1} when $p=n$. Next we prove  
 \begin{lemma}
 \label{derofuisbdd}
For all $x\in D$ we have $ |\nabla u''|\leq c M^{\frac{1}{p-1}}$ where $c=c(p, n, c_*)$.
 \end{lemma}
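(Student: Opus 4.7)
The plan is to estimate $|\nabla u''(x)|$ pointwise in $D$ by combining the comparison $u''\le u'$ with a scale-aware bound on $u'$ near each stopping ball, and then invoking the boundary regularity of Lemma \ref{ballminusball} on the smooth spherical pieces of $\partial\Omega''$. First I would verify, via the weak comparison principle for \eqref{flaplace}, that $u''\le u'$ throughout $D$: on $\partial\Omega''\cap\partial D$ we have $u''=0\le u'$, while on $\partial B(\ti z,2r_1)$ the prescribed value of $u''$ is $\essinf_{\overline{B}(\ti z,2r_1)}u'\le u'$. Combined with \eqref{u'ismu} this already yields $\max_D u''\le c$.

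Next I would extract a scale-aware bound on $u'$ near each stopping ball $B(z_k,t_k)$. The maximality in the stopping time \eqref{stoptime} forces $\mu'_f(B(z_k,2t_k))\le cMt_k^{n-1}$ (the case $t_k=s$ and the borderline case $2t_k\ge 1$ being absorbed via \eqref{u'ismu} and \eqref{Mandmeasure}); inserting this into Lemma \ref{uismu} applied to $u'$ on $B(z_k,t_k)$ gives
\[
\max_{B(z_k,t_k)} u' \;\le\; c\,M^{1/(p-1)}\,t_k.
\]
This is the crucial gain: the factor $M^{1/(p-1)}$ is tied to the scale $t_k$.

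With these in hand, I would split $D$ into three regions. For $x\in D$ within distance $\lesssim t_k$ of some $\partial B(z_k,t_k)$, I would choose a nearest $w\in\partial B(z_k,t_k)$ and apply Lemma \ref{ballminusball} with $z=z_k$, $t=t_k$, and radius $r\approx t_k/c$, using \eqref{unionofballs} and the bounded overlap of the Besicovitch covering to secure the local description $D\cap B(w,4r)=B(w,4r)\setminus\overline{B}(z_k,t_k)$. This gives
\[
|\nabla u''(x)|\;\le\; \tfrac{c}{t_k}\max_{D\cap B(w,2r)} u''\;\le\; \tfrac{c}{t_k}\max_{B(z_k,3t_k)} u'\;\le\; c\,M^{1/(p-1)}.
\]
Near $\partial B(0,15)$ the same Lemma \ref{ballminusball} applies with $u''=0$ on the relevant sphere, and near $\partial B(\ti z,2r_1)$ I would apply it to $u''-\essinf_{\overline{B}(\ti z,2r_1)} u'$, which vanishes on that sphere; in both cases the resulting bound is $|\nabla u''|\le c\le cM^{1/(p-1)}$ since $M\ge 1$. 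For any remaining $x\in D$ whose distance to every spherical boundary component exceeds a universal $c_0=c_0(p,n,c_*)$, the interior estimate in Lemma \ref{localholderfornablau}, together with $\max u''\le c$, closes the argument.

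The main technical obstacle will be verifying the local ``exterior-of-a-ball'' hypothesis of Lemma \ref{ballminusball} at the reference point $w$: this can fail where two stopping balls touch, or where a stopping ball meets $\partial B(0,15)$. I would handle it by anchoring $w$ on whichever sphere dominates the geometry in $B(w,4r)$, thickening the competing excluded balls slightly so that they lie outside $B(w,4r)$ after a harmless shrinking of $r$, and absorbing the constants this costs into $c$.
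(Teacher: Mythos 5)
Your decomposition of $D$ has a hole, and on the uncovered part the tools you cite are not enough. Consider $x\in D$ whose nearest boundary point lies on a stopping sphere $\partial B(z_k,t_k)$ with $t_k=s$, but with $d(x,\partial D)\approx \sqrt{s}$: this $x$ is not within distance $\lesssim t_k$ of the sphere (your first region), and its distance to a spherical component is far below the universal $c_0$ (your third region). There, Lemma \ref{localholderfornablau} together with only $\max_D u''\le c$ gives $|\nabla u''(x)|\le c/d(x,\partial D)\approx c\,s^{-1/2}$, which is enormously larger than $cM^{1/(p-1)}$ since $s\ll e^{-M}$. The repair is the scale-$d$ version of your own ``scale-aware bound'': maximality in \eqref{stoptime} gives $\mu'_f(B(z_k,t))\le Mt^{n-1}$ for \emph{all} $t\in[t_k,1]$, so by Lemma \ref{uismu} (centered at $z_k\in\partial\Omega'$) and $u''\le u'$ one gets $\esssup_{B(x,d/2)}u''\le cM^{1/(p-1)}d$ whenever $d=d(x,\partial D)\ge t_k$; you state this only at scale $t_k$. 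Note the paper avoids this region entirely by a different mechanism: having the bound only in a neighborhood $W$ of $\partial D$, it applies Lemma \ref{v'issolnlemma} (so $\max(\log f(\nabla u''),\theta)$ is a weak subsolution of $\tilde L$) and a maximum principle to propagate the bound to all of $D$.

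More seriously, in your main region $d(x,\partial D)\lesssim t_k$ the use of Lemma \ref{ballminusball} at scale $r\approx t_k/c$ cannot be rescued the way you suggest. The Besicovitch balls overlap, so $\partial D$ contains $(n-2)$-dimensional edges where two stopping spheres cross (the set $\Lambda$ in the proof of Lemma \ref{secderint}), possibly at very shallow angles. If $x$, hence its nearest boundary point $w\in\partial B(z_k,t_k)$, lies at distance $\delta\ll t_k$ from such an edge, then every ball $B(w,4r)$ with $r\gtrsim\delta$ meets the competing excluded ball, so the one-sphere hypothesis of Lemma \ref{ballminusball} fails; ``thickening'' the other ball changes $D$ and is not harmless, while shrinking $r$ below $\delta$ makes the conclusion $|\nabla u''(x)|\le \frac{c}{r}\max_{B(w,2r)}u''$ useless unless you already know $u''\lesssim M^{1/(p-1)}\,\mathrm{dist}(\cdot,\partial D)$ at scale $r$ --- which is what you are trying to prove. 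This is exactly why the paper does not argue via boundary regularity here: it mollifies the integrand ($f^\epsilon$), builds the explicit barrier $\Phi$ on the annulus $B(z_k,2t_k)\setminus \bar B(z_k,t_k)$ as a supersolution of the linearized operator $L^\star$, and compares $u''_\epsilon\le\Phi$ on $D$ intersected with the annulus --- a comparison insensitive to other balls intruding --- to obtain the linear decay $u''(x)\le c\,\frac{d(x,\partial D)}{t_k}\esssup_{B(z_k,2t_k)}u'$; then the interior estimate of Lemma \ref{localholderfornablau} and \eqref{tk1-p} give the bound near $\partial D$, and Lemma \ref{v'issolnlemma} finishes in the interior. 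Your outline needs either this barrier step or an equally robust substitute for the linear decay of $u''$ near the spheres; as written, the step at the sphere--sphere intersections would fail.
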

 \begin{proof}
Let $x\in D$, and choose $y\in \partial D$ such that $|x-y|=d(x,\partial D)=d$. We first prove Lemma
\ref{derofuisbdd} when $y\in \partial B(z_{k}, t_{k})$ and $x\in B(z_{k}, 2t_{k})$. The same reasoning can be applied when
$y\in \partial B(0,15)$ or $y\in\partial B(z,2r_{1})$. To this end, let $\epsilon>0$ be given and
set
\[
f^{\epsilon}(\eta):=\int\limits_{\mathbb{R}^{n}}f(x)\psi_{\epsilon}(\eta-x) \, \dr{d}x
\]
where $\psi\in C^{\infty}_{0}( B (0, 1) )$ with
\[
\int\limits_{\mathbb{R}^{n}}\psi \dr{d}x=1\; \mbox{and}\; \psi_{\epsilon}(x)=\frac{1}{\epsilon^{n}}\psi\left(\frac{x}{\epsilon}\right)\; \mbox{whenever}\; x\in\rn{n}.
\] 
We note that $ f^\ep $ is no longer homogeneous but $ f^\ep $ is infinitely differentiable. Moreover, whenever $ \eta, \xi \in \rn{n}$ we have
\begin{align} 
\label{reg}
  c^{-1} ( \ep + |\eta| )^{p-2} \, |\xi |^2 \, \leq \, \sum_{j,k=1}^n  \frac{ \ar^2 f^{\ep}}{ \ar \eta_{j} \eta_{k} } (\eta ) \xi_j \xi_k \leq   \, c ( \ep + |\eta| )^{p-2} |\xi |^2
\end{align} 
where $ c = c ( p, n, c_*) \geq 1$. Let $u''_{\epsilon}$ be a weak solution to \eqref{flaplace}  in $ D $  with $ f $ replaced by $ f^\ep $  and the same continuous boundary values as $u''.$
 Then \eqref{ast} holds with $ f, u $ replaced by $ f^\ep, u''_\ep. $  Using \eqref{reg}, an analogue of Lemma \ref{localholderfornablau}, and Schauder type estimates we see that $ u''_\ep $ is infinitely differentiable in $ \Om'' $ and that $ \ze = u^{''}_\ep $ is a pointwise solution to $ L^{\star} \ze = 0$ where
\begin{align}
\label{Lstar}
L^{\star}\zeta:=\frac{1}{(\epsilon+|\nabla u''_{\epsilon}|)^{p-2}}\sum\limits_{j,k=1}^{n} f^{\epsilon}_{\eta_{j}\eta_{k}}(\nabla u''_{\epsilon}) \zeta_{x_{j}x_{k}}.
\end{align}
Moreover, if we let
\[
\tilde{\ph}(w)=\frac{e^{-\m{N}|w-z_{k}|^{2}}-e^{-4\m{N}t^{2}_{k} }}{e^{-\m{N}t^{2}_{k}}-e^{-4\m{N}t^{2}_{k}}}.
\]
Then $L^{\star}\tilde{\ph}\geq 0$ in $B(z_{k}, 2t_{k})\setminus B(z_{k}, t_{k})$ if $\m{N}=\m{N}(p,n,c_*)$ is sufficiently large.
Thus if  \[ \Ph (w)=(\esssup\limits_{B(z_{k}, 2t_{k})} u'\, )(1-\tilde{\ph}(w)) \]
then $L^{\star}\Ph \leq 0$ in $B(z_{k}, 2t_{k})\setminus B(z_{k}, t_{k})$ .
 Using this fact,
the maximum principle for solutions to \eqref{Lstar}, $ u''  \leq u', $ and comparing boundary values, we conclude that
$ u''_\ep  \leq \Ph $  in $B(z_{k}, 2t_{k})\setminus \bar B(z_{k}, t_{k}).$ 
 Letting $\epsilon\to 0$, we deduce from the usual variational type arguments and an analogue of  Lemma
\ref{localholderfornablau} for $u_\ep'' $  that subsequences of $\{ u''_{\epsilon}\}, \{ \nabla u''_{\epsilon}\}$ converge pointwise to $u'',\nabla u''$ in $D$ and uniformly on compact subsets of $D.$ Hence 
\begin{align}
\label{uisleqPh}
u'' \leq \Ph\;  \mbox{in}\; B ( z_k, 2 t_k ) \sem \bar{B} ( z_k, t_k ).
\end{align}
Using \eqref{uisleqPh} and applying Lemma \ref{localholderfornablau} to $u''$ we see that
     \begin{align}
     \label{nablau'epsilon}
     | \nabla u'' (x)  |  \leq \frac{c}{d}\; u''(x) \leq \frac{c}{d}\; \Ph (x) \leq  \frac{c^2}{t_k} \, \esssup\limits_{B(z_k, 2t_k)} u'.
     \end{align}
     where $d=d(x,D)$. Lemma \ref{uismu} and (\ref{Mandmeasure})-(\ref{unionofballs}) imply
\begin{align}
\label{tk1-p}
t_k^{1-p}  \esssup\limits_{B(z_k, 2 t_k)} (u')^{p-1} \leq c\, t_k^{1-n}\, \mu'_{f} ( B (z_k, 4 t_k) ) \leq c^2  M.
\end{align}
Combining (\ref{nablau'epsilon}) and (\ref{tk1-p}) we see that Lemma \ref{derofuisbdd} holds for $u''$  at points in $ D $  which are also in   $ \bigcup  B ( z_k, 2 t_k) \sem \bar B ( z_k, t_k) . $ Similar arguments also give this inequality at points near $ \ar B (0, 15)$ and  $\ar B (  \ti z, 2 r_1)$. Thus there exists an open set $W$ with $\ar D \subset W$ and  $ |\nabla u'' | \leq c  M^{1/(p-1) } $ in $ W \cap D $
where $ c = c (p,n,c_*). $   Applying Lemma \ref{v'issolnlemma} to $u''$, then a maximum principle for weak subsolutions to $\tilde{L}$ defined as in (\ref{Lzeta0}), we see that Lemma \ref{derofuisbdd} holds for every $x\in D$.
 \end{proof}
The  proof of the  next lemma is essentially the same as in \cite[Lemma 8]{ALV13}. For  completeness we give the arguments here.
\begin{lemma}
\label{secderint}
The functions $| \nabla u'' |^{p-2}\, |u''_{x_j x_k}|$ for $1\leq j,k\leq n$ are all integrable in $D$. 
\end{lemma}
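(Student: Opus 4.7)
The plan is to apply Cauchy--Schwarz and then bound the resulting weighted $L^{2}$ norm of the Hessian of $u''$ by the Caccioppoli-type inequalities already established in Lemmas \ref{localholderfornablau} and \ref{ballminusball}.

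By Cauchy--Schwarz,
\[
\int_D |\nabla u''|^{p-2} |u''_{x_j x_k}|\, dx \, \leq \, \left(\int_D |\nabla u''|^{p-2}\, dx\right)^{\!1/2} \left(\int_D |\nabla u''|^{p-2} (u''_{x_j x_k})^2\, dx\right)^{\!1/2},
\]
so it is enough to show that each factor on the right is finite. The first factor is finite immediately: $|\nabla u''|$ is bounded on $D$ by Lemma \ref{derofuisbdd} and $D \subset B(0,15)$ is bounded.

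To handle the second factor I would cover $\overline{D}$ by a finite collection $\{B_i\}_{i=1}^N$ of open balls, of radii $r_i$ and with the $\{2B_i\}$ having bounded overlap, and classify each $B_i$ according to the location of its center: (i) interior balls with $2B_i \subset D$; (ii) balls centered on $\partial \Omega' \cap \partial D$, on which $u''$ vanishes; (iii) balls centered on a spherical piece $\partial B(z_k,t_k)$ of $\partial D$ (where again $u''=0$) or on $\partial B(0,15)$; and (iv) balls centered on $\partial B(\tilde z, 2 r_1)$, where $u''$ equals a positive constant. For cases (i) and (ii) I would appeal to the integral estimate of Lemma \ref{localholderfornablau}, which gives the Caccioppoli bound even up to the flat part of $\partial \Omega'$ after extending $u''$ by zero. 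For case (iii) the geometry is locally the exterior of a ball with $u''$ vanishing there, so Lemma \ref{ballminusball} applies directly. For case (iv), the equation $\Delta_f u'' = 0$ is invariant under adding a constant to $u''$, so subtracting the boundary constant puts us back in the setting of Lemma \ref{ballminusball}. In each case one obtains an estimate of the form
\[
\int_{B_i \cap D} |\nabla u''|^{p-2} (u''_{x_j x_k})^2\, dx \, \leq \, \frac{c}{r_i^2}\int_{2 B_i \cap D} |\nabla u''|^{p}\, dx.
\]
Summing over $i$ and using the bounded overlap of $\{2B_i\}$ together with the $L^\infty$ bound $|\nabla u''| \leq c\, M^{1/(p-1)}$ from Lemma \ref{derofuisbdd} (and the fact that $D$ is bounded), the second factor is finite.

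The only subtle point is the handling of case (iv): there $u''$ does not vanish on the boundary component, but the additive-constant invariance of $\Delta_f$ reduces it to Lemma \ref{ballminusball}. Otherwise the argument is essentially the same as in \cite[Lemma 8]{ALV13}, the present paper's nearly-endpoint structural hypotheses on $f$ being absorbed into the constants depending on $c_*$ in Lemmas \ref{localholderfornablau} and \ref{ballminusball}.
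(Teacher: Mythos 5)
Your opening move already gives the game away: applying Cauchy--Schwarz globally over $D$ reduces the lemma to the finiteness of $\int_D |\nabla u''|^{p-2}\sum_{j,k}(u''_{x_jx_k})^2\,dx$, and that is a strictly stronger statement which the available estimates do not give and which one should not expect to be true. Note first that the closed balls $\overline B(z_k,t_k)$ cover $\partial\Omega'\cap\overline B(0,15)$, so $\partial D$ is contained in a finite union of spheres ($\partial B(0,15)$, the $\partial B(z_k,t_k)$, $\partial B(\tilde z,2r_1)$); there is no separate ``flat piece of $\partial\Omega'$'' to treat, and in any case Lemma \ref{localholderfornablau} is a purely interior estimate, so it cannot be invoked ``up to the boundary after extending $u''$ by zero.'' The genuine difficulty is the set $\Lambda$ where two of these spheres intersect: near such an $(n-2)$-dimensional reentrant edge the model (a harmonic function in a wedge of opening $\theta\in(\pi,2\pi)$) behaves like $r^{\lambda}$ with $\lambda=\pi/\theta<1$, so $|D^2u|\sim r^{\lambda-2}$ and the (weighted) $L^2$ Hessian integral diverges. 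Quantitatively, the only bound at your disposal on a ball of radius $\rho$ whose double stays at distance $\approx\rho$ from the rest of the boundary is the Caccioppoli estimate $\int_{B}|\nabla u''|^{p-2}|D^2u''|^2\,dx\leq c\rho^{-2}\int_{2B}|\nabla u''|^p\,dx\leq cM^{p/(p-1)}\rho^{n-2}$, and summing $\rho^{n-2}$ over a Whitney family along an $(n-2)$-dimensional edge diverges (each dyadic scale contributes a fixed amount). The lemma is true only because the two Cauchy--Schwarz factors must be paired at matching scales: locally one gets $\int_B|\nabla u''|^{p-2}|D^2u''|\,dx\leq cM\rho^{n-1}$, and $\sum\rho^{n-1}$ \emph{does} converge along the edge. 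So Cauchy--Schwarz has to be applied ball by ball, exactly as in the paper's display (3.10), not once over all of $D$.

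The covering step has the same blind spot. You propose a \emph{finite} bounded-overlap cover in which every ball is either compactly inside $D$ or centered on a single smooth boundary piece with a fixed-scale estimate available; but any ball containing a point of $\Lambda$ meets two boundary spheres, and then neither Lemma \ref{localholderfornablau} (interior) nor Lemma \ref{ballminusball} applies at that scale, since the latter requires $B(w,4r)\setminus\overline B(z,t)=B(w,4r)\cap\tilde O$, i.e.\ exactly one sphere inside $B(w,4r)$. Since the Besicovitch balls $B(z_k,t_k)$ overlap, $\Lambda$ is generically nonempty, so no such finite cover exists. Handling this is precisely the content of the paper's proof: a Whitney-type decomposition of $D$ by balls $B(x_m,\tfrac12 d(x_m,\partial D))$, boundary balls $B(y_i,\tfrac1{20}d'(y_i))$ where $d'$ is the distance to the \emph{other} spheres, a geometric argument (distinguishing tangential and transversal sphere intersections) showing $d(x_m)\leq\kappa\,d'(y)$ for the nearest boundary point, and the packing estimates that make $\sum_i (d'(y_i))^{n-1}\leq c\,\rh^{n-1}(\partial D)$ and $\sum_{m\in S_1}d(x_m)^{n-1}\leq c\,s^{-1}$ finite. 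None of this machinery appears in your proposal, and without it neither your second factor nor a fixed-scale covering argument can be justified. (Your treatment of the sphere $\partial B(\tilde z,2r_1)$ by subtracting the constant boundary value is fine, but that is not where the difficulty lies.)
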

\begin{proof}
 Let $ \Lambda \subset \partial \Omega''$ be the set of points where $ \partial \Omega'' $ is not smooth.
     Clearly $ \rh^{n-1} (\La) = 0 .$
    If  $ \hat x \in \partial D \setminus\Lambda$, then  $ \hat x $ lies in exactly one of
        the finite number of spheres which contain points of $ \partial D. $ Let $ d' ( \hat x ) $ denote the distance from
    $ \hat x $ to the union of spheres not containing $ \hat x $
    but containing points of $ \partial D. $ If $ d' = d' ( \hat x ) < s/100, $ then from Lemma \ref{ballminusball} applied to
    $ u'' $ we see that each component of
     $ \nabla u'' $ has a H\"older continuous extension to $ B ( \hat x, 3 d'/4 ). $
     Also from H\"older continuity, Lemmas \ref{ballminusball} and \ref{derofuisbdd} we see that
   \begin{align}
   \label{3.10}
   \begin{split}
 \frac{1}{c} \sum\limits_{j, k =1}^n\, \, \int\limits_{ D \cap B ( \hat x, \frac{d'}{8} )}& \, | \nabla u'' |^{p-2} \, | u''_{x_j x_k} | \, \dr{d}x\\
&      \leq (d')^{\frac{n}{2}}  M^{\frac{p-2}{2(p-1)}} \sum\limits_{j, k =1}^n\, \, \left(\, \, \int\limits_{D \cap B ( \hat x, \frac{d'}{8} )} \, |\nabla u''|^{p-2} \, | u''_{x_j x_k} |^2 \, \dr{d}x \right)^{\frac{1}{2}}\\
     & \leq  c (d')^{\frac{(n - 2)}{2}} \,  M^{\frac{p-2}{2(p-1)}}\left(\, \, \int\limits_{ D \cap B ( \hat x, \frac{d'}{2} )} \, | \nabla u''|^{p} \, \dr{d}x \right)^{\frac{1}{2}}
     \\
     & \leq \, c^2 M \, (d')^{(n-1)}.
       \end{split}
       \end{align}
       To prove Lemma \ref{secderint} we assume as we may that $ B ( z_l, t_l ) \not \subset B ( z_\nu, t_\nu ) $ when $ \nu \neq l, $ since otherwise we discard one of these balls.  Also from  a well known covering theorem we get a covering $ \{ B (y_i, \frac{1}{20} d' (y_i) ) \} $ of $ \ar D \sem \La $ with the property that
  $ \{ B (y_i, \frac{1}{100} d' (y_i) ) \}$ are pairwise disjoint.  From \eqref{3.10} we find that
     \begin{align}
     \label{3.11}
     \begin{split}
       \sum\limits_{i,j,k}\, \, \int\limits_{D \cap B ( y_i, \frac{1}{8} d'(y_i) ) }| \nabla u'' |^{p-2} |u''_{x_j x_k}|\, \dr{d}x \, & \leq c \,  M \sum\limits_{i} (d' (y_i))^{n-1} \\
      & \leq c^2 M\, \rh^{n-1} ( \ar D ).
			\end{split}
     \end{align}

Let $ d ( x ) $ denote $ d ( x, \partial D) $. We choose a covering $ \{ B ( x_m, \frac{1}{2} d ( x_m )\} $ of $ D $ with
  $ \{ B ( x_m, \frac{1}{20} d ( x_m )\}, $ pairwise disjoint.
  We note that if $ x \in D $ and $ y \in \ar D $ with $ | y - x | = d ( x ), $ then $ y \in \ar D \sem \La. $
Indeed otherwise $ y $ would be on the boundary of at least two balls contained in the complement of $ D $ and so
by the no containment assumption above, would have to intersect $ B ( x, d (x) ),$ which clearly is a contradiction.
Also we assert that if $ d ( x ) \leq 1000 s, $ then $ d ( x ) \leq  \kappa \, d' (y) $ where $ \kappa $ can depend
on various quantities including the configuration of the balls,  $ \{ B(z_k,t_k)\} $ but is independent of
$ x \in D $ with $ d ( x ) \leq 1000 s. $  Indeed from the no containment assumption one deduces that otherwise there exists
sequences $ ( x_m), (y_m), (y'_m), $ with $ x_m \in D,  y_m \in C_1, y'_m \in C_2, $ where  $ C_1, C_2 $ are spheres in
 $ \{ \ar B ( z_j, r_j ) \}_1^N $ with $ C_1 \not =  C_2 $ and
\begin{align}
 \label{condic}
 \begin{split}
  &  | x_m - y_m | = d ( x_m), |y_m - y_m' | = d'(y_m) \, \, \mbox{and}\\
  & \mbox{as }
 m  \rar \infty,   d(x_m)/ d'( y_m ) \rar \infty, \mbox{ with }  x_m, y_m, y'_m \rar w
 \in C_1 \cap C_2 \subset \La .  
\end{split} 
\end{align}
From basic geometry we see that either
 $ (i) \, C_1 \cap C_2 = w. $ or
 $ (ii) \,  C_1 \cap C_2 $ is an
 $ n - 2 $ dimensional sphere.  If  $  (i) $ holds then $ C_1, C_2 $ are tangent, so clearly for large $ m, \, d ( x_m) \leq c d '( y_m) . $  If  $ (ii)$ holds then   considering the tangent planes to  $ C_1, C_2 $  through $ w $   we see for large $m$
that  \[  d ( x_ m ) \leq c  d ( x_m, C_1 \cap C_2 )   \leq  c^2 d' ( y_m )  \] where $ c $ is independent of $ m. $ In either case we have reached a contradiction to \eqref{condic}.
Hence our assertion is true.

From this analysis and our choice of covering of $D$ we see that for a given $ B ( x_m, \frac{1}{2}d ( x_m ) ) $ with $ d ( x_m ) < 1000 s, $
there exists $ j = j( m )$  with $ B ( x_m,\frac{1}{2} d ( x_m ) ) \subset B ( y_j , \kappa' d' (y_j) ) $ for some $ 0< \kappa' < \infty $ independent of $ m. $

Let $ S_l, l = 1, 2, 3, $ be disjoint sets of integers defined as follows.
\begin{displaymath}
 \left\{
 \begin{array}{ll}
 m \in S_1 & \mbox{if}\; d ( x_m ) \geq 1000 s,\\
 m \in S_2 & \mbox{if}\; m \not \in S_1\; \mbox{and}\; \not\exists\; j \; \mbox{with}\; B ( x_m, \frac{1}{2} d (x_m)) \subset B ( y_j, \frac{1}{8} d' (y_j )),\\
 m\in S_{3}& \mbox{if} \; m\; \mbox{is not in either}\;  S_1\; \mbox{or}\; S_2.
 \end{array}
 \right.
\end{displaymath}
    Let
    \[
    K_l =  \sum\limits_{m\in S_l}\, \, \int\limits_{ D \cap B ( x_m, \frac{1}{2} d(x_m) ) } | \nabla u''|^{p-2} |u''_{x_j x_k}| \dr{d}x \; \mbox{for}\; l =1, 2, 3.
    \]
        Then
         \begin{align}
         \label{3.12}
         \int\limits_{D}| \nabla u''|^{p-2} |u''_{x_j x_k}| \dr{d}x \, \leq \, K_1 \, + \, K_2 \, + \, K_3 .
          \end{align}
From Lemma \ref{localholderfornablau} and the same argument as in (\ref{3.10}) we see that
  \begin{align}
  \label{3.13}
  K_1 \leq c\, M \sum_{m\in S_1} d(x_m)^{n-1}
    \leq c^2 M s^{-1}
    \end{align}
where we have used disjointness of our covering $\{B(x_m, \frac{1}{20} d(x_m))\}$. Using disjointness of these balls and (\ref{3.11}) we get
\begin{align*}
K_3 \leq c\, M \rh^{n-1} (\ar D).
\end{align*}
Finally if $m\in S_2$ then as discussed earlier there exists $j=j(m)$ with $d( x_m)\approx d'(y_j)$, where proportionality constants are independent of $m$, so $B( x_m, \frac{1}{2} d ( x_m ) ) \subset B ( y_j, \kappa' d' ( y_j ) )$. From disjointness of  $ \{ B ( x_m, \frac{1}{20} d (x_m) ) \} $ and a volume type argument we deduce that each $j$ corresponds to at most $ \kappa''$ integers $m\in S_3$ where $\kappa''$ is independent of $j$. Using this fact, an argument as in (\ref{3.10}), as well as disjointness of $ \{ B ( y_i , \frac{1}{100} d' (y_i) ) \}$, we conclude that there is a $\ti \kappa$ with $0<\ti \kappa < \infty, $ satisfying
  \begin{align}
  \label{3.15}
  K_2 \, \leq \, \ti \kappa M \sum_{m \in S_2} d (x_m)^{n-1}  \leq
    \ti \kappa^2  M \sum_{j} d' (y_j)^{n-1}  \,   \leq \ti \kappa^3 M \, \rh^{n-1} ( \ar D ) .    \end{align}
    Using (\ref{3.13})-(\ref{3.15}) in (\ref{3.12}) we find that Lemma \ref{secderint} is valid.
 \end{proof}
We next show that there exists $c=c(p, n, c_*)\geq 1$ such that
 \begin{align}
 \label{finitemeasure}
 c^{-1}\leq \mu''_{f}(\partial\Omega''\cap B(0,10))\leq \mu''_{f}(\partial\Omega'')\leq c.
 \end{align}
To prove (\ref{finitemeasure}), it follows from Lemmas \ref{fislegthanu}-\ref{uismu}, (\ref{unionofballs}), and the fact $u'(\tilde{z})=1$ that $u''\geq 1/c$ on $\partial B(\tilde{z}, 4r_{1})$ for some $c=c(p,n,c_*)\geq 1$. Let $l$ denote the line from the origin through $ \tilde{z}$ and let $ \zeta_{1}$ be the point on this line segment in $ \partial B(\tilde{z}, 4r_{1})\cap B ( 0, 10)$. Let $ \ze_2 $ be the point on the line segment from $ \ze_1 $ to the origin with $ d ( \ze_2, \ar \Omega'') = \frac{1}{20} r_1 $  while $d(\ze, \ar\Om'')>\frac{1}{20} r_1$ at all other points on the line segment from $ \ze_1 $ to $ \ze_2. $ Then from (\ref{d1c}), Lemma \ref{fislegthanu}, and the above discussion we see that $ u'' ( \ze_2 ) \geq 1/c'$ for some $c'=c'(p,n,c_*)\geq 1$.
Also, $ B ( \ze_2, \frac{1}{2} r_1 ) \subset B (0, 10). $ Let $ \hat \ze $ be the point in $ \ar \Om'' $ with $ | \hat \ze - \ze_2 | = d ( \ze_2, \ar \Om'' ). $    Applying Lemma \ref{uismu} with  $w = \hat \ze,  r = 2 d ( \ze_2, \ar \Om'')$, we deduce that the left hand inequality is valid.  The right hand inequality in this claim follows once again from Lemma \ref{uismu} and $ u'' \leq u'$.

 Using Lemmas \ref{derofuisbdd}-\ref{secderint} and (\ref{finitemeasure}) we prove the following lemma.
 \begin{lemma}
 \label{logfgradu}
 There exists $ c = c (p, n, c_*) $ such that
   \[
  \int\limits_{\partial D} | \log f(\nabla u'') | \frac{f(\nabla u'')}{|\nabla u''|} \, \rd\rh^{n-1} \, \leq \, c \log M.
  \]
  \end{lemma}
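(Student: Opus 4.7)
The plan is to use the weak subsolution property from Lemma \ref{v'issolnlemma} together with $\tilde L u'' = 0$ from \eqref{Luiszero} in a Green-type identity. First, applying the divergence theorem to \eqref{ast} for $u''$ and using the $C^{1,\alpha'''}$-regularity up to the smooth part of $\partial\Omega''$ (Lemma \ref{ballminusball}), the global gradient bound from Lemma \ref{derofuisbdd}, and the second-derivative integrability from Lemma \ref{secderint}, I identify
\[
d\mu''_f \ =\ p\,\frac{f(\nabla u'')}{|\nabla u''|}\,d\rh^{n-1}\quad\text{on }\partial\Omega''\setminus\Lambda,
\]
where $\Lambda$ is the finite singular set with $\rh^{n-1}(\Lambda)=0$ appearing in the proof of Lemma \ref{secderint}. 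Combined with \eqref{finitemeasure} this yields $\int_{\partial\Omega''}(f(\nabla u'')/|\nabla u''|)\,d\rh^{n-1}\le c$. I then split $\partial D=\partial\Omega''\cup(\partial B(\tilde z,2r_1)\cap\overline{\Omega''})$; on the inner sphere, $\overline B(\tilde z,4r_1)\subset\Omega'$ so Lemmas \ref{fislegthanu} and \ref{localholderfornablau} make $|\nabla u''|$ comparable to $1$, hence the integrand is bounded by a constant and this piece contributes at most $c\le c\log M$.

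On $\partial\Omega''$ I split $\log f(\nabla u'') = (\log f)^+ - (\log f)^-$. For the positive part, Lemma \ref{derofuisbdd} gives $(\log f(\nabla u''))^+ \le c\log M$, so integration against $d\mu''_f$ yields a contribution $\le c\log M$. For the negative part, fix $K=A\log M$ with $A$ to be chosen large, and set $v_K:=\max\{\log f(\nabla u''),-K\}$. By Lemma \ref{v'issolnlemma}, $v_K$ is a bounded weak subsolution of $\tilde L$ in $D$. With $B=(f_{\eta_j\eta_k}(\nabla u''))$, the Green-type identity produces (after a standard regularization using Lemmas \ref{derofuisbdd}--\ref{secderint})
\[
0\ \le\ \int_D u''\,\tilde L v_K\,dx\ =\ \int_{\partial D}\bigl[u''\,B\nabla v_K-v_K\,B\nabla u''\bigr]\cdot\nu_D\,d\rh^{n-1}.
\]
Since $u''=0$ on $\partial\Omega''$ and $B\nabla u''=(p-1)\m{D}f(\nabla u'')$ by \eqref{fisdegreep}, while $\m{D}f(\nabla u'')\cdot\nu_D=-p\,f(\nabla u'')/|\nabla u''|$ there (as $\nabla u''$ points into $D$), and the inner-sphere terms are bounded by $c(1+K)\le c\log M$, rearranging together with the positive-part bound gives
\[
\int_{\{-K\le\log f<0\}}|\log f|\,\frac{f(\nabla u'')}{|\nabla u''|}\,d\rh^{n-1}\ +\ \frac{K}{p}\,\mu''_f(\{\log f<-K\})\ \le\ c\log M.
\]

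It remains to control the tail $\{\log f(\nabla u'')<-K\}\cap\partial\Omega''$, where $|\nabla u''|<cM^{-A/p}$ is very small. On this set the pointwise cancellation
\[
|\log f(\nabla u'')|\,\frac{f(\nabla u'')}{|\nabla u''|}\ \le\ c(p)\,|\log|\nabla u''||\cdot|\nabla u''|^{p-1}
\]
is a universally bounded function of $|\nabla u''|\in(0,1]$, so the integrand is $O(1)$ pointwise. Combining this $L^\infty$-bound with the measure decay $\mu''_f(\{\log f<-K'\})\le c\log M/K'$ from the Green identity applied at every truncation level $K'\ge K$, and the $\rh^{n-1}$-geometry of the stopping-time boundary spheres controlled via Lemma \ref{secderint}, closes the tail contribution by $c\log M$ for $A$ chosen sufficiently large. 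The \emph{main obstacle} is precisely this tail step: a single Chebyshev-type application of the Green identity only produces the non-integrable $1/K$ decay on $\mu''_f(\{\log f<-K\})$, so the argument must combine this measure estimate with the pointwise cancellation in $|\log f|\cdot f/|\nabla u''|$ and with the specific stopping-time geometry of $\partial\Omega''$ to close the sum.
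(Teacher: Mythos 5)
Your positive-part estimate and your truncated Green-type inequality reproduce the heart of the paper's argument, but the final ``tail'' step is a genuine gap, and the ingredients you propose cannot close it. The Chebyshev-type decay $\mu''_{f}(\{\log f(\nabla u'')<-K'\})\leq c\log M/K'$ is, as you yourself note, not summable over $K'$; and the pointwise bound $|\log f(\nabla u'')|\,f(\nabla u'')/|\nabla u''|\leq c$ on the tail only helps if the $\rh^{n-1}$-measure of the tail portion of $\partial D$ were itself $O(\log M)$, which is neither proved nor true in general: the surface measure of $\partial D$ is controlled only by quantities of size $s^{-1}$ (compare \eqref{3.13} in the proof of Lemma \ref{secderint}), and $s\ll e^{-M}$ is later taken even smaller, so ``the stopping-time geometry'' supplies nothing of the required size. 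As written, the tail contribution is simply not bounded.

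The irony is that your displayed truncated inequality already finishes the proof once its constant is made independent of $K$: on the inner sphere $\partial B(\tilde z,2r_1)$ one has $|\nabla u''|\approx 1$ by the barrier argument (see \eqref{3.20}), so for large $K$ the truncation is inactive there and the inner-sphere terms are $O(1)$, not $O(K)$. Then $\int_{\{-K\leq \log f(\nabla u'')<0\}}|\log f(\nabla u'')|\,\frac{f(\nabla u'')}{|\nabla u''|}\,\rd\rh^{n-1}\leq c\log M$ holds uniformly in $K$, and the monotone convergence theorem as $K\to\infty$ gives the negative-part bound with no tail analysis whatsoever. This is exactly the paper's route: it truncates $v''=\max(\log f(\nabla u''),\xi)$, obtains the sign of the interior term from the subsolution property (Lemma \ref{v'issolnlemma}, tested against $\phi u^{*}$ with $u^{*}=\max(u''-\delta,0)$), bounds the cutoff and inner-sphere errors by constants independent of $\xi$ (see \eqref{3.27a}), and then lets $\xi\to-\infty$ by monotone convergence. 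One further caution: the paper integrates by parts only the factor $\sum_{j,k} f_{\eta_j\eta_k}(\nabla u'')u''_{x_k}$, so the sole boundary term is $v''\,\langle \m{D}f(\nabla u''),\mathfrak{n}\rangle$; your symmetric Green identity carries the extra boundary term involving $u''$ times the conormal derivative of $v_K$, which vanishes formally on $\partial\Omega''$ but requires justifying a trace for $\nabla v_K$ near the singular set $\Lambda$ and near points where $\nabla u''$ degenerates --- it is cleaner to avoid it as the paper does.
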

  \begin{proof}
  Let
  \[
\log^+ t := \max \{ \log t, 0 \}\; \mbox{and}\; \log^- t := \log^+ (1/t)\; \mbox{for}\; t \in (0, \infty).
\]
We first give a proof of Lemma \ref{logfgradu} for $\log^{+}f(\nabla u'')$. To this end, we observe from Lemma \ref{ballminusball}  that
\begin{align}
    \label{3.16}
    \rd \mu''_{f}=  p \frac{f(\nabla u'')} {|\nabla u''|} \rd\rh^{n-1}> 0\; \mbox{on}\; \partial \Omega'' \setminus \Lambda.
    \end{align}
It follows from Lemma \ref{derofuisbdd}, (\ref{finitemeasure}), (\ref{3.16}), and $\rh^{n-1}(\Lambda)=0$ that
\begin{align} 
\label{3.18a}
  \int\limits_{\partial \Omega''} \log^{+}(f(\nabla u'')) \frac{f(\nabla u'')}{|\nabla u''|} \, \rd\rh^{n-1} \leq c \log M\, \mu''_{f}(\partial\Omega'')\leq c^{2}\log M.
\end{align} 
To prove Lemma \ref{logfgradu} for $\log^{-}f(\nabla u'')$, fix $\xi$, $-\infty<\xi<-1,
$ and set $v''(x)=\max(\log f(\nabla u''), \xi)$ when $x\in \overline{D}\setminus \Lambda$. Given small $\theta>0$ we set
\begin{align}
\label{Lahe}
  \La ( \he ) = \{ x \in D : d ( x, \La ) \leq \he \}\; \mbox{and}\; D ( \he ) = D \sem \La (\he).
\end{align}
Observe from  Lemmas \ref{derofuisbdd}-\ref{secderint} and (\ref{finitemeasure}) that
 \[
  \tilde{L} u''(x) =  \nabla \cdot \left( \m{D} f(\nabla u''(x)) \right) = 0
  \]
exists pointwise  for almost every $ x \in D ( \he ) $ and is integrable on $ D (\he). $  Put
 \begin{align}
 \label{3.19}
 \begin{split}
 I (\he) &= \int\limits_{D(\he)} \sum\limits_{j,k=1}^{n}\frac{\partial }{\partial x_{j}}\left(f_{\eta_{j}\eta_{k}} (\nabla u'' )  u''_{x_{k}}\right)\, v'' \, \dr{d}x + \int\limits_{D(\he)}  \sum_{j,k= 1}^n f_{\eta_{j}\eta_{k}} (\nabla u'' )  u''_{x_k} \, \frac{\partial v''}{\partial x_{j}} \dr{d}x \\
&= I_1 ( \he ) + I_2 ( \he )
\end{split}
 \end{align}
From \eqref{Lahe} and $p-1$ homogeneity of derivatives of $f$  we see that $ I_1 (\he ) = 0$. To handle $ I_2 ( \he ) =I(\he) $, we first use a barrier argument as in Lemma \ref{derofuisbdd}, and then we use Lemma \ref{ballminusball} to deduce that there exists some $ c = c ( p, n, c_*) \geq 1, $ such that
       \begin{align}
   \label{3.20}
   c^{-1}\leq  | \nabla u''|  \leq c\;  \mbox{on}\; \bar B ( \ti z, 2 r_2 ) \sem B ( \ti z, 2 r_1) \; \mbox{where}\; r_2 = ( 1 + c^{-1}) r_1.
   \end{align}
Let $\phi$, $0\leq \phi \leq 1$, be an infinitely differentiable function in $\mathbb{R}^{n}$ with $\phi\equiv 1$ on $\rn{n} \setminus B( \ti z, 2 r_2 )$, $|\nabla\phi|\leq c\, r_{1}^{-1}$, and $\phi\equiv 0$ on an open set containing $B(\ti z, 2 r_1 )$.   From (\ref{d1c}) and the definition of $r_{1}$ we have $|\nabla \phi|\leq c^{2}$. Rearranging $I_{2}(\theta)$  and writing    
  $ f_{\eta_{j}\eta_{k}}$  for    $     f_{\eta_{j}\eta_{k}}  ( \nabla u'' ) $  
we have
\begin{align*}
  I_2 (\he) & = \int\limits_{D(\he)}  \sum\limits_{j,k= 1}^n f_{\eta_{j}\eta_{k}} (\phi u'' )_{x_k} \, v''_{x_j} \dr{d}x + \int\limits_{D(\he)}  \sum\limits_{i,k= 1}^n f_{\eta_{j}\eta_{k}} ( (1 - \phi) u'' )_{x_k} \, v''_{x_j} \dr{d}x \\
  &  = I_{21} ( \he ) + I_{22} ( \he ).
  \end{align*}
It follows from Lemmas \ref{localholderfornablau}-\ref{ballminusball}, (\ref{3.20}), and an argument similar to (\ref{3.10}) that
  \begin{align}   
  \label{3.27a}  
 \begin{split}
|I_{22}(\theta)| &\leq \int\limits_{B(\ti z, 2 r_2 )}  \sum\limits_{j,k= 1}^n |f_{\eta_{j}\eta_{k}}|  (1 - \phi) |\nabla u''| \, |v''_{x_j}| \rd x \\
							&\hspace{.5cm} + \int\limits_{B(\ti z, 2 r_2 )\setminus B(\ti z, 2 r_1 )}  \sum\limits_{j,k= 1}^n |f_{\eta_{j}\eta_{k}}| |\nabla \phi| u'' \, |v''_{x_j}| \rd x \\
							&\leq c, 
\end{split}
\end{align}
where $c$ is independent of $\theta$. From \eqref{3.20} and Lemmas \ref{localholderfornablau},
 \ref{ballminusball}, \ref{derofuisbdd} and \ref{secderint} we see that the integrand in the integral defining  $ I_{21} ( \he ) $ is bounded by an integrable function independent of  $ \he $.  Using this fact and the Lebesgue dominated convergence theorem we find  that
\begin{align} 
\label{3.28a}
{\ds \lim_{\he \rar 0 }  I_{21} (\theta)=\int\limits_{D}  \sum\limits_{j,k= 1}^n f_{\eta_{j}\eta_{k}} (\phi u'' )_{x_k} \, v''_{x_j} \dr{d}x} =: I'_{21}. 
\end{align}
We claim that  $ I'_{21} \leq 0$. To verify this claim let  $ u^* = u^* (\de ) = \max ( u'' - \de, 0 )$. Convoluting $ \ph u^* $ with an approximate identity and taking limits we see from Lemma \ref{v'issolnlemma}
 that
 \[   
 \int\limits_{D}  \sum_{j,k= 1}^n f_{\eta_j \eta_k} ( \ph u^{*})_{x_k} \, v''_{x_j} dx \leq 0.  
 \]
Moreover, once again from Lemmas \ref{derofuisbdd} and \ref{secderint}, we observe that the above integrand is dominated by an integrable function independent of $\delta$. From this fact, the above inequality, and the Lebesgue dominated convergence theorem we get assertion $ I'_{21} \leq 0$. Using \eqref{3.27a}, \eqref{3.28a}, and above claim we conclude that
     \begin{align} 
     \label{3.28} 
     \lim\limits_{\he \rar 0} I (\he )  \leq c. 
     \end{align}
On the other hand from \cite[chapter 5]{EG92} and \eqref{3.19} we see that integration by parts can be used to get
\begin{align*}
I(\theta)&=\int\limits_{\partial D(\theta)} v''\sum\limits_{j,k=1}^{n}f_{\eta_{j}\eta_{k}}u''_{x_{k}}\mathfrak{n}^{j}\dr{d}\rh^{n-1}  =  (p-1) \int\limits_{\partial D(\theta)} v''\sum\limits_{j=1}^{n}f_{\eta_{j}} \mathfrak{n}^{j}\dr{d}\rh^{n-1}
\end{align*}
 where $\mathfrak{n}$ is the outer unit normal to $\partial D(\theta)$. From Lemma \ref{derofuisbdd}, the dominated convergence theorem, and the definition of $D(\theta)$, we have
\begin{align}
 \label{3.29}
\begin{split}
 \int\limits_{ \partial D ( \theta) \setminus \partial B ( \ti z, 2 r_1 ) }
 v'' \,\sum\limits_{j = 1}^{n} f_{\eta_{j}} \,  \mathfrak{n}^j \dr{d}\rh^{n-1}  \to  \int\limits_{ \partial \Omega'' \setminus \Lambda}  v''
 \,\sum\limits_{j}^{n} f_{\eta_{j}} \, \mathfrak{n}^j \dr{d}\rh^{n-1}\; \mbox{as}\; \theta \to 0 .
\end{split}
 \end{align}
From (\ref{fisdegreep}), (\ref{3.28}), and (\ref{3.29}) we deduce
\begin{align}
\label{3.30}
\begin{split}
 \int\limits_{\partial\Omega''\setminus \Lambda}	 v'' \sum\limits_{j=1}^{n}f_{\eta_{j}}\mathfrak{n}^{j}\dr{d}\rh^{n-1}&=-p\int\limits_{\partial\Omega''\setminus\Lambda}v''\, \frac{f(\nabla u'')}{|\nabla u''|}\, \dr{d}\rh^{n-1}\\
 &\leq p(p-1)^{-1} \lim\limits_{\theta\to 0}I(\theta)+c\leq 2c.
\end{split}
\end{align}
 where we have also used the fact that $\mathfrak{n}=-\frac{\nabla u''}{|\nabla u''|}$ and 
 \[ 
 \left|\, \, \int\limits_{\partial B(\tilde{z}, 2r_{1})}v''\sum\limits_{j=1}^{n}f_{\eta_{j}}\mathfrak{n}^{j}\dr{d}\rh^{n-1}\right| \leq c=c(p,n,c_*).
 \]  
Letting $ \xi \to - \infty $ in (\ref{3.30}) and using the monotone convergence theorem we see that (\ref{3.30}) holds with $v''$ replaced by $ \log f(\nabla u'')$.
 Finally from \eqref{3.30} for $\log f(\nabla u'')$ and \eqref{3.18a} we conclude the validity of Lemma \ref{logfgradu}.
  \end{proof}
\section{Proof of Theorem \ref{alv13.1}}
\label{mainproof}
In this section we first give a proposition which will be a  consequence of lemmas we obtained in section \ref{advancedregres} and then we prove Theorem \ref{alv13.1}. To this end let $O,f,u, \hat z, \rho, \mu_{f}$ be as in Theorem \ref{alv13.1}. Let $w, r, Q$ be as in Lemma \ref{prop} and let $\hat{\lambda}$ be a positive non-decreasing function on $(0,1]$ with $ \, \, \lim\limits_{t\to 0}\frac{\hat{\lambda}(t)}{t^{n-1}}=0. $
\begin{proposition}
\label{prop1}
There is a compact set $F=F(w, r) \subset  \partial O \cap B ( w, 20 r ) $ such that
 \[
\rh^{\hat{\lambda}}(F) = 0 \; \mbox{and}\; \mu_{f} ( B ( w, 100 r ))\leq c\mu_{f}( F ).
\]
\end{proposition}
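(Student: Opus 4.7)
The plan is to use the normalization from Section~\ref{advancedregres} (translate $w$ to the origin and scale $r$ to one) to reduce Proposition~\ref{prop1} to constructing a compact set $F'\subset\ar\Om'\cap B(0,20)$ with $\mu'_f(F')\geq c^{-1}$ and $\rh^{\hat\lambda}(F')=0$; the desired $F$ is then the image of $F'$ under $\Xi$. A useful first observation is that since $\hat\lambda(t)/t^{n-1}\to 0$ as $t\to 0$, it is enough to arrange that $F'$ admits, for every $\delta>0$, a cover $\{B(x_i,r_i)\}$ with $r_i<\delta$ and $\sum_i r_i^{n-1}\leq K$ with $K$ independent of $\delta$, because then $\sum_i\hat\lambda(r_i)\leq K\sup_{t\leq\delta}(\hat\lambda(t)/t^{n-1})\to 0$. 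In particular, $\rh^{n-1}(F')<\infty$ will suffice.

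The source of such a cover is the stopping time \eqref{stoptime}. For $M$ large and $s$ small, the Besicovitch cover $\{B(z_k,t_k)\}$ of $\ar\Om'\cap\bar B(0,15)$ splits into good balls $\G=\{k:\mu'_f(B(z_k,t_k))=Mt_k^{n-1}\}$ and bad balls ($t_k=s$); bounded overlap and Lemma~\ref{uismu} give $\sum_k t_k^{n-1}\leq c/M$. The key input is Lemma~\ref{logfgradu}: applying Chebyshev to $\int_{\partial D}|\log f(\nabla u'')|\,\rd\mu''_f\leq c\log M$ produces, for $K=K(p,n,c_*)$ sufficiently large, a set
\[
G_K:=\{x\in\partial D:|\log f(\nabla u''(x))|\leq K\log M\}
\]
with $\mu''_f(G_K)\geq c^{-1}$. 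On $G_K$ the pointwise bound $M^{-K}\leq f(\nabla u'')\leq M^K$ holds and, since $f\approx|\nabla u''|^p$ by \eqref{prooff}, yields a two-sided comparability $c^{-1}M^{-K(p-1)/p}\rd\rh^{n-1}\leq\rd\mu''_f\leq cM^{K(p-1)/p}\rd\rh^{n-1}$. Consequently $\rh^{n-1}(G_K)\leq cM^{K(p-1)/p}\mu''_f(G_K)<\infty$. To pass to $\mu'_f$, use $u''\leq u'$ in $D$ with matching zero boundary values on the common part of $\ar\Om'\cap B(0,15)\sem\bigcup\bar B(z_k,t_k)$: comparing normal derivatives there gives $\rd\mu''_f\leq c\,\rd\mu'_f$ on the shared smooth boundary, and \eqref{finitemeasure} guarantees the total $\mu''_f$-mass is nondegenerate.

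The principal obstacle is controlling the artificial part of $\ar\Om''$: the spheres $\ar B(z_k,t_k)$ are not contained in $\ar\Om'$, yet $\mu''_f$ can load them, so one must verify that a positive fraction of $\mu''_f(G_K)$ actually sits on $G_K\cap\ar\Om'$ rather than on $\bigcup_k\ar B(z_k,t_k)$. This is where the barrier argument of Lemma~\ref{derofuisbdd} enters: $|\nabla u''|\leq cM^{1/(p-1)}$ forces $\mu''_f(\bigcup_k\ar B(z_k,t_k))\leq cM\sum_k t_k^{n-1}\leq c^2$, which must then be balanced against the lower bound \eqref{finitemeasure} for $\mu''_f(\ar\Om''\cap B(0,10))$ and the normalization \eqref{u'ismu} for $\mu'_f(B(0,1))$ coming out of Lemma~\ref{prop}; a judicious choice of $K$ (at worst $K\approx\log M$) forces the original-boundary share of $\mu''_f(G_K)$ to be at least $c^{-1}$. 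Once this is established, setting $F'=\overline{G_K\cap\ar\Om'\cap B(0,15)}$ gives a compact set with $\mu'_f(F')\geq c^{-1}$ and $\rh^{n-1}(F')<\infty$; the reduction in the first paragraph then yields $\rh^{\hat\lambda}(F')=0$, and $F=\Xi(F')$ satisfies the conclusion of Proposition~\ref{prop1}.
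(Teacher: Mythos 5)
There is a genuine gap, and it sits exactly at the step you flag as ``the principal obstacle.'' The measure $\mu''_f$ is supported on $\partial \Omega''$, and by construction $\partial\Omega''$ contains \emph{no} points of $\partial\Omega'\cap B(0,15)$: the Besicovitch balls $\{B(z_k,t_k)\}$ cover $\partial\Omega'\cap \overline{B}(0,15)$, so every such boundary point lies in the interior of the removed set $\bigcup_k \overline{B}(z_k,t_k)$, and $\partial\Omega''$ consists only of pieces of the auxiliary spheres $\partial B(z_k,t_k)$ and of $\partial B(0,15)$. Consequently your set $G_K\cap\partial\Omega'$ is empty (in particular carries no $\mu''_f$ or $\mu'_f$ mass), so $F'=\overline{G_K\cap\partial\Omega'\cap B(0,15)}$ cannot satisfy $\mu'_f(F')\geq c^{-1}$, and the claim that ``a positive fraction of $\mu''_f(G_K)$ sits on $G_K\cap\partial\Omega'$'' cannot be rescued. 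The proposed fix via ``comparing normal derivatives on the shared smooth boundary'' has no content here: $\partial\Omega'$ is the boundary of an arbitrary open set, possibly nowhere smooth, there is no shared smooth portion with $\partial D$, and no Hopf-type inequality $\rd\mu''_f\leq c\,\rd\mu'_f$ is available on it — indeed, if one could compare the measures pointwise on the original boundary, the whole auxiliary construction of $u''$ would be unnecessary. A smaller but real error in the same spirit: the bound $\sum_k t_k^{n-1}\leq c/M$ holds only for the stopping-time (good) balls with $\mu'_f(B(z_k,t_k))=Mt_k^{n-1}$; the balls at scale $s$ admit no such bound, and in the paper their $(n-1)$-content is only controlled by $cM^{A}$, which is then absorbed because $s$ is chosen \emph{after} $M$ so that $\hat\lambda(4s)/(4s)^{n-1}$ is tiny. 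For the same reason your opening reduction ``it suffices to produce $F'$ with $\rh^{n-1}(F')<\infty$'' — while logically correct — aims at a stronger statement than the proposition and is not what this construction can deliver.

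The correct transfer from $\mu''_f$ to $\mu'_f$ is ball-by-ball, and this is where Lemma \ref{logfgradu} is actually used: by a Chebyshev argument (as in \eqref{3.36}) the ``ugly'' balls, on whose spheres the density $f(\nabla u'')/|\nabla u''|$ is everywhere $\leq M^{-A}$, carry total $\mu''_f$ mass at most $c/A$; choosing $A$ large, the remaining (good and bad) balls carry $\mu''_f$ mass $\geq c^{-1}$, and then Lemma \ref{uismu} together with $u''\leq u'$ (display \eqref{3.31}) converts $\mu''_f(\overline{B}(z_j,t_j))$ into $\mu'_f(B(z_j,4t_j))$, comparable to $\mu'_f(B(z_j,t'_j))$ by the stopping-time property. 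This makes $E=\partial\Omega'\cap\bigcup_{j\in\mathfrak{G}\cup\mathfrak{B}}B(z_j,t'_j)$ — a genuine subset of the original boundary — satisfy $\mu'_f(E)\geq c^{-1}$, while $\rh^{\hat\lambda}_{\delta'}(E)\leq\epsilon$ comes from $M$ large (good balls) and $s$ small (bad balls); the compact $F$ is then extracted from a limsup of such sets $E_m$ using regularity of $\mu'_f$. Your proposal correctly identifies the roles of the stopping time, of Lemma \ref{derofuisbdd}, and of Lemma \ref{logfgradu} with Chebyshev, but without the ball-by-ball comparison \eqref{3.31} there is no route from the sphere-supported measure $\mu''_f$ back to $\mu'_f$ on $\partial\Omega'$, and the argument does not close.
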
  
\begin{proof}
We first note from Lemma \ref{uismu} and the fact $u''\leq u'$ that for given $ j$, $1 \leq j \leq N$
\begin{align}
\label{3.31}
t_{j}^{1-n}\, \mu''_{f}(\overline{B}(z_j, t_j)) \leq c \, t_j^{1-p}\, \esssup\limits_{B(z_j , 2t_j )} (u')^{p-1} \leq c^2\,  t_j^{1-n}\,  \mu'_{f}(B ( z_j, 4 t_j) ) .
  \end{align}
where $N$ is the constant defined after \eqref{stoptime}.  For given $A>>1$, we see from (\ref{stoptime}) that $ \{ 1, 2, \dots, N \} $ can be divided into disjoint subsets: the good set $\mathfrak{G}$, the bad set $\mathfrak{B}$, and the ugly set $\mathfrak{U}$ as follows,
 \begin{align*}
  \left\{
  \begin{array}{l}
 \mathfrak{G}:=\{j: t_j >s\},\\
\mathfrak{B}:=\{j: t_j = s\; \mbox{and}\; \frac{f(\nabla u'')(x)}{| \nabla u''|(x)} \geq M^{- A }\; \mbox{for some}\; x \in \ar \Omega'' \cap \ar B ( z_j, s) \sem \La\} ,\\
 \mathfrak{U}:= \{j: t_j = s\; \mbox{and}\; j\not\in \mathfrak{B}\}.
  \end{array}
  \right.
 \end{align*}
Let $ t_j'= t_j $ when $ j \in \mathfrak{G} $ and $ t_j' = 4s $ when $ j \in \mathfrak{B}$. We define
  \begin{align}
  \label{defnofE}
  E:= \ar \Om' \cap \bigcup_{j\in \mathfrak{G} \cup \mathfrak{B}} B ( z_j, t_j' ).
  \end{align}
We first show for some $c=c(p,n,c_*)\geq 1$ and given $\epsilon > 0 $ that
\begin{align}
\label{hausdorffcontent}
\rh^{\hat{\lambda}}_{\delta'}(E)\leq \epsilon\; \mbox{and}\; c^{-1}\leq \mu'_{f}(E)
\end{align}
where $\delta'$ is as in (\ref{Mandmeasure}) and $\rh^{\hat{\lambda}}_{\delta'}(E)$ is the Hausdorff content of $E$ defined in \eqref{hausdorffcontentdefn}. Proposition \ref{prop1} will essentially follow from (\ref{hausdorffcontent}). To show (\ref{hausdorffcontent}), observe that if
    \begin{align}
    \label{3.32}
    x \in \bigcup_{j\in \mathfrak{G} \cup \mathfrak{B}} B ( z_j, t_j')\, \, \, \mbox{then}\, \, \, x\, \, \, \mbox{lies in at most}\, \, \, c = c ( n ) \, \, \, \mbox{of}\, \, \, \{ B ( z_j, t_j' ) \}.
  \end{align}
This observation can be proved using $ t_j \geq s, 1 \leq j \leq N, $  a volume type argument, and the fact that  $ \{ B ( z_j, t_j)\}_1^N $ is a Besicovitch covering of $ \ar \Omega' \cap \bar B (0, 15)$. 

We first consider $j\in\mathfrak{B}$. Using (\ref{nablau'epsilon}), (\ref{tk1-p}),  the  definition of $\mathfrak{B}$, and (\ref{3.32}) we find  for some $ c = c (p, n, c_*) \geq 1 $ that
  \begin{align}
  \label{MAt'j}
  M^{-A} \leq \frac{f(\nabla u'')(x)}{|\nabla u''( x ) |} \leq c (t'_{j})^{1-n}  \mu'_{f}(B(z_j, t'_{j})) \; \mbox{whenever}\; j\in\mathfrak{B}.
  \end{align}
Rearranging this inequality, summing over $j\in\mathfrak{B}, $ and using (\ref{3.32}), we see that
    \[
    \sum_{j\in \mathfrak{B}} (t_j')^{n-1} \, \leq \, \ti c \, M^A \mu'_{f} \left( \bigcup_{j \in \mathfrak{B}}  B ( z_j, t_j' ) \right)
    \leq (\ti c)^2 \, M^A
    \]
    provided $ \ti c = \ti c (p, n, c_*) $ is large enough. Now since $ t_j' = 4s $ for all
  $ j \in \mathfrak{B} $ we may for given $ A, M, \ep $ choose $ s > 0 $ so small that
 \begin{align*}
   \frac{\hat{\lambda}(4s)}{ (4s)^{n-1}}  \leq \frac{\ep}{ 2 (\ti c)^2 M^A }
\end{align*}
where we have used the definition of $\hat{\lambda}$. Using this choice of $s$ in \eqref{MAt'j} we get
  \begin{align}
  \label{3.34}
  \sum_{j\in \mathfrak{B}} \hat{\lambda} (t_j') \, \leq \, \frac{\ep}{2}.
  \end{align}

On the other hand, we may suppose $\delta'$ in (\ref{Mandmeasure})  is so small that
   $ \hat{\lambda}( t_j' ) \leq (t'_j)^{n-1} $ for $ 1 \leq j \leq N. $  Then from (\ref{u'ismu}), (\ref{stoptime}), and (\ref{3.32}), we see that
\begin{align}
 \label{3.35}
 \sum\limits_{j \in \mathfrak{G}}\hat{\lambda}(t_j')\leq \sum\limits_{ j \in \mathfrak{G}} (t_j')^{n-1}\leq \frac{1}{M} \sum_{j \in \mathfrak{G}} \mu'_{f}( B ( z_j, t_j ) ) \leq \frac{\ep}{2}
 \end{align}
 provided $ M = M ( \ep ) $ is chosen large enough.
  Fix $ M $ satisfying all of the above requirements. In view of (\ref{3.34}), (\ref{3.35}), and  the  definition of Hausdorff content we have $\rh^{\hat{\lambda}}_{\delta'}(E)\leq \epsilon\; \mbox{for}\; E\; \mbox{as in (\ref{defnofE})}$. This finishes the proof of the left hand inequality in (\ref{hausdorffcontent}). To prove the right hand inequality in \eqref{hausdorffcontent}, we use \eqref{prooff}, Lemma \ref{logfgradu}, and the definition of $ \mathfrak{U} $ to obtain
 \begin{align}
 \label{3.36}
 \begin{split}
   \mu''_{f} \left(\ar \Om' \cap \bigcup_{j\in \mathfrak{U}} \bar{B} ( z_j, t_j)\right) &\leq \mu''_{f}\left( \left\{ x \in \ar \Om'' : \frac{f(\nabla u'')(x)}{| \nabla u'' (x) |} \leq M^{-A} \right\}\right) \\
   & \leq \frac{c}{(p-1) A\log M} \int\limits_{\ar \Om''} |\log f(\nabla u'')| \, \frac{f(\nabla u'')}{| \nabla u'' |} \rd\rh^{n-1} \\
   &\leq \frac{c'}{A}.
   \end{split}
   \end{align}
  Choosing $ A=A(p,n,c_*) $ large enough we have from Lemma \ref{logfgradu} and (\ref{3.36}),
 \begin{align}
 \label{3.37}
 \mu''_{f} \left(\bigcup_{j\in \mathfrak{G} \cup \mathfrak{B}} B (0, 10) \cap \overline{B}( z_j, t_j) \right) \geq \mu'' ( B (0, 10 ) ) - \mu'' \left(  \bigcup_{j\in \mathfrak{U}} \bar{B} ( z_j, t_j)
 \right) \geq\frac{1}{c}
   \end{align}
   for some $ c(p, n, c_*)\geq 1$. Finally from (\ref{3.31})-(\ref{3.32}) and (\ref{3.37}),we get for some $ c = c(p, n, c_*) \geq 1 $ that
  \begin{align*}
  \frac{1}{c^{3}} \leq \frac{1}{c^{2}} \sum_{j \in \mathfrak{G} \cup \mathfrak{B}} \mu''_{f} (\bar{B} ( z_j, t_j) ) \leq \frac{1}{c} \sum_{j\in \mathfrak{G} \cup \mathfrak{B}} \, \mu'_{f} ( \bar{B} ( z_j, t'_j))\leq  \mu'_{f}( E ) .
   \end{align*}
For $j\in \mathfrak{G}$ we have used the definition of $t_j$ so that
\[
\mu'_{f}( B(z_j , 4t_j)) < M 4^{n-1} t_j^{n-1} =4^{n-1} \mu'_{f}( B(z_j,t_j)) =4^{n-1} \mu'_{f}(B(z_j,t'_j))
\]
Thus (\ref{hausdorffcontent}) is valid. To finish the proof of Proposition \ref{prop1}, we note that we can choose $E_{m}$ relative to $\delta'=\epsilon=2^{-m}$ for $m =m_{0}, m_{0}+1, \ldots$  with
\begin{align}
\label{Em}
\rh^{\hat{\lambda}}_{\delta'}(E_{m})\leq 2^{-m}\; \mbox{and}\; c^{-1}\leq \mu'_{f}(E_{m}).
\end{align}
From (\ref{Em}) and measure theoretic arguments we see that if we set
\[
E'=\bigcap\limits_{k=m_{0}}\left( \bigcup\limits_{m=k}E_{m}\right)
\]
then it follows from regularity of $\mu'_{f}$ that there exists a compact set $F\subset E'$ satisfying $\rh^{\hat{\lambda}}(F)=0\; \mbox{and}\; c^{-1}\leq\mu'_{f}(F)\; \mbox{where $c$ is as in (\ref{Em})}$. In view of these two estimates we conclude that the proof of Proposition \ref{prop1} is now complete
\end{proof}
We next give an easy consequence of Lemma \ref{prop} and Propositions \ref{prop1}. Let $Q$ be as in Lemma \ref{prop} and let $\hat{\lambda}$ be as in Proposition \ref{prop1}. We first prove that there exists a Borel set $ Q_1\subset Q$ with
  \begin{align}
  \label{3.39}
 \mu_{f} ( \partial O \cap B ( \hat z, \rho) \sem Q_{1})= 0\, \, \, \mbox{and}\, \, \, \rh^{\hat{\lambda}}(Q_{1}) = 0.
  \end{align}
To prove (\ref{3.39}) we first observe that if $\mu_{f}(\partial O\cap B(\hat{z},\rho))<\infty$ then it follows from Lemma \ref{prop}, Proposition \ref{prop1},  a Vitali type covering argument, and induction that there exists compact sets $\{F_{l}\}$ such that $F_{l}\subset Q,\; F_{k}\cap F_{l}=\emptyset\; \mbox{for}\; k\neq l$ and $\mu_{f}(F_{1})>0$ with
 \[
  \mu_{f} ( Q \sem \bigcup_{l = 1}^m F_l )\leq c'  \mu_{f} ( F_{m+1} ), \, \, m = 1, 2, \dots
 \]
 for some $c' = c' (p, n, c_*) \geq 1$.  Moreover  $ \rh^{\hat{\lambda}} ( F_l ) = 0 $ for all $l.$ Then it follows from measure theoretic arguments that $ Q_1 = \bigcup_{l=1}^\infty F_l$ has the desired properties in \eqref{3.39}. In case $\mu_{f}(\partial O\cap B(\hat{z},\rho))=\infty$, we can write $\partial O\cap B(\hat{z},\rho)$ as a union of countable Borel sets with finite $\mu_{f}$ measure and apply the same argument in each set.
Therefore we conclude that there exists a Borel set $Q_{1}$ in $Q$ satisfying (\ref{3.39}).

We now prove Theorem \ref{alv13.1}. To this end, we let
  \[
  P := \left\{ x \in \ar O \cap B ( \hat z, \rho ):\; \limsup_{t \to 0} \frac{\mu_{f} ( B ( x, t ) )}{t^{n-1}} > 0 \right\}.
   \]
   We first show that
   \begin{align}
  \label{3.40}
  \mu_{f}(\ar O \cap B ( \hat z, \rho ) \sem P ) = 0.
  \end{align}
From Lemma \ref{prop} we have $ \mu_{f}(\ar O \cap B ( \hat z, \rho ) \sem Q ) = 0 $. Therefore, it suffices to prove \eqref{3.40} with $ Q $ replacing
 $ \ar O \cap B ( \hat z, \rho ). $  To do this we argue by contradiction and thus assume $\mu_{f}( Q \sem P ) > 0$. Then, by Egoroff's theorem there exists a compact set $ K \subset Q \setminus P $ with
  \begin{align}
  \label{3.41}
  \mu_{f} (K)> 0\; \mbox{and}\; \lim_{t \to 0} \frac{\mu_{f} (B ( x, t ) )}{t^{n-1}} = 0 \; \mbox{uniformly for}\; x \in K.
  \end{align}
  Set $ \al_0 = 1$ and choose $ \al_k \in (0, 1), k = 1, 2, \dots, $ such that
  \[
 \al_{k+1} < \frac{\al_k}{2}\; \mbox{and}\; \sup_{0 < t \leq \al_k }  \frac{\mu_{f} ( B ( x, t ) )}{t^{n-1}} \leq 2^{-2k} \mbox{ for all } x \in K.
  \]
Define $ \hat{\lambda}_{0} ( t ) $ on $(0, 1]$ in the following way:  put $ \hat{\lambda}_{0} (0) = 0$,
\[
\hat{\lambda}_{0} ( \al_k ) = 2^{-k} ( \al_k)^{n-1}\; \mbox{for}\; k = 0, 1, \dots,
\]
and define $\hat{\lambda}_{0}(t)$ when $t\in[\al_{k+1}, \al_k ]$  in such a way that
\[
\frac{\hat{\lambda}_{0}(t)}{t^{n-1}}\; \mbox{is linear for}\; t\in [\al_{k+1}, \al_k ]\;  \mbox{whenever}\; k = 0, 1, \dots.
 \]
 Clearly,
  \[
  \frac{\hat{\lambda}_{0} ( t )}{t^{n-1}} \to 0\; \mbox{as}\; t \to 0.
  \]
  Moreover, we observe that
  \begin{align}  
  \label{3.41a} 
\begin{split}
&  \hat{\lambda}_{0}(2t)\leq 2^{n+1}\hat{\lambda}_{0} (t)\; \mbox{for}\; 0<t<1/2,\\
&    \frac{ \mu_{f}( B ( x, t ) ) }{ \hat{\lambda}_{0} (t) } \leq  2^{1 - k}\; \mbox{whenever}\; \al_{k+1} \leq t \leq \al_k\; \mbox{and}\; x \in K.
  \end{split}
  \end{align} 
Let $ Q_1 $ be as in \eqref{3.39} relative to  $ \hat{\lambda}_0. $  Then for a given positive integer $m$ it follows from (\ref{3.39}) that there is a covering $ \{ B ( x_j, r_j ) \}$ of $ K\cap Q_{1}$ with
  \[
r_{j} \leq\frac{\al_m}{2} \;\mbox{for all}\; j\; \mbox{and}\;  \sum\limits_{j} \hat{\lambda}_{0} ( r_j ) \leq 1.
  \]
We may assume that there is an $ x_j' \in K \cap B ( x_j, r_j ) $ for each $ j $ since otherwise we discard $ B ( x_j, r_j ). $   Then from (\ref{3.41a})  we find that
  \[
  \mu_{f} ( K \cap Q_{1}) \leq \sum\limits_{j} \mu_{f} ( B ( x_j', 2 r_j ) ) \leq 2^{1 - m}  \sum\limits_{j} \hat{\lambda}_{0} (2 r_j ) \leq 2^{n + 2 -m} .
  \]
Since $m$ is arbitrary, we have $\mu_{f}(K \cap Q_1) = 0. $ Using this equality and \eqref{3.39} we find that $ \mu_f ( K ) = \mu_f ( Q\sem Q_1) + \mu_{f}(K\cap Q_{1}) = 0$  and so  we have reached a contradiction in \eqref{3.41}. Hence,  $\mu_{f}( Q \sem P ) =0$ and (\ref{3.40}) holds.

We next show that the set $P$ has $ \sigma-$ finite $\rh^{n-1} $ measure. To this end, once again we may assume  $ \mu_{f} ( \ar O \cap B ( \hat z , \rho ) ) < \infty$. Let $m$ be an arbitrarily fixed positive integer and define
   \[
   P_m := \left\{ x \in P:\; \limsup_{t \to 0}   \frac{\mu_{f}( B ( x, t ) )}{t^{n-1} } > \frac{1}{m} \right\}.
  \]
 Given $ \hat{\delta} > 0 $ we choose a Besicovitch covering $ \{ B ( y_i, r_i ) \}$  of $P_m$ with
\[
   y_i \in P_m,\; r_i \leq \hat{\delta},\; B ( y_i, r_i ) \subset B ( \hat z, \rho ), \; \mbox{and} \; r_i^{n-1} <m\, \mu_{f} ( B ( y_i, r_i ) ).
  \]
It follows that
   \begin{align}
   \label{3.43}
   \sum\limits_{i}r_i^{ n - 1} < m \sum\limits_{i} \mu_{f} ( B ( x_i, r_i ) )\leq c \, m
    \mu_{f} (\ar O \cap B ( \hat z, \rho ) ) < \infty.
    \end{align}
          Letting $ \hat{\delta} \to 0 $ and using the definition of $\rh^{n - 1} $ measure we conclude from (\ref{3.43}) that $\rh^{n - 1} ( P_m ) < \infty$. As $m$ is arbitrary we conclude that $P$ has $\sigma-$finite $\rh^{n-1} $ measure.
  In view of this observation, (\ref{3.40}) and Lemma \ref{prop}, the  proof of Theorem \ref{alv13.1} is now  complete. $ \Box $ 
\section{Proof of Theorem \ref{alv13.2}}
\label{section5}
This section is dedicated to the proof of Theorem \ref{alv13.2}. Before giving a proof we recall our setting from section \ref{intro}; let 
\[
\ti \Ga = \left\{\ti Q_{k, j};\;  k = 1, \dots, \;\mbox{and}\; j = 1, \dots, 2^{kn} \right\}
\]
denote the set of cubes defined in section \ref{intro} and let
$\m{C}$ be the corresponding Cantor set.  Also as in section \ref{intro} let  $ \m{S} $ be the cube in $\mathbb{R}^{n}$ with side length 1 centered at the origin.  and let $u^{\infty}$ be the positive weak solution to
\[
\Delta_{f} u^{\infty}=\nabla \cdot \m{D}f(\nabla u^{\infty})=0 \; \mbox{in}\; \m{S}\setminus \m{C}
\]
with continuous boundary values $1$ on $\partial \m{S}$ and $0$ on $\m{C}$.  Let $\mu^{\infty}_{f}$ be the measure associated with $u^{\infty}$ as in (\ref{ast}). For ease of notation, we write $ \mu, u $ for $ \mu_f^\infty, u^\infty. $  Next let $\alpha, \beta$ be the constants as in section \ref{intro} and $s(\tilde{Q}_{k,j})=a_{0}a_{1}a_{2}\ldots a_{k} < 2^{-(k+1)}$ denote the side length of $\tilde{Q}_{k,j}$ where $\alpha\leq a_{i}\leq \beta<1/2$ for every $i=1,2,\ldots$. Let $c_*$ be the constant as in \eqref{prooff}.  

Let $ \ti Q \in \ti \Ga $  for some $k$ with $ k \geq 10^5$ and $j=1,\ldots, 2^{kn}$. We first show that
\begin{align} 
\label{new1} 
\mu ( 100 \ti Q ) \leq c \max_{ \ar 2 \ti Q } u^{p-1} \leq c^2 \mu ( \ti Q )\; \mbox{for some}\; c=c(p, n, c_*, \al, \be )
\end{align} where once again $ c_* $ is as in \eqref{prooff}.  To prove \eqref{new1} note from the geometry of $\m{C}$ that there exists a smallest  $ \ti Q'  \in \ti \Ga $ with
 \[   
 100 \ti Q \subset ( 1 + \he ) \ti Q' 
 \]  
 where $ \he = \frac{1}{100n} \min ( \al, 1/2 - \be )$.  Covering $ \ti Q' \cap  \mathcal{C} $ by balls  of radius $ \approx s ( \ti Q) $ and applying Lemma \ref{uismu} in each ball we deduce that
  \begin{align}
  \label{eqn11}
  \mu (100 \ti Q)  \leq \mu ( \ti Q' )  \leq c \max_{\ar (1+\he ) \ti Q'} u^{p-1}
  \end{align}
   where $ c = c (p, n, c_*, \al, \be )$. Using Harnack's inequality, basic geometry and once again Lemma \ref{uismu} we also see that
  \begin{align}
  \label{eqn12}
  (s(\ti Q))^{n-p} \max_{\ar (1+\he ) \ti Q'} u^{p-1} \leq \ti c (s(\ti Q))^{n-p}  \max_{\ar (1+\he ) \ti Q} u^{p-1} \leq \ti c^2 \mu (\ti Q ) 
\end{align}
   where $ \ti c $ has the same dependence as $c$. Combining \eqref{eqn11} and \eqref{eqn12} we obtain
   (\ref{new1}). From H\"{o}lder continuity of $1 - u$ near $\ar\m{S}$,  Harnack's inequality, and Lemma \ref{uismu} we also find that
 \begin{align} 
 \label{new2}  
 \mu ( \m{C} ) \approx 1
 \end{align} 
 where proportionality constants depend only on $ p, n, c_*, \al, \be$. Analogous to Proposition \ref{prop1} we prove
   \begin{proposition}
\label{prop2}
Let $ \ti Q \in \ti \Ga $ be  a given cube. Then there exists $ \de' > 0 $ with the same dependence as $ \de $ in Theorem \ref{alv13.2},  $ c = c ( p, n, c_*, \al, \be ) \geq 1$, and a compact set $F \subset \mathcal{C} \cap \ti Q $  with 
\[
\rh^{ n-1 - \de' }( F ) = 0 \; \mbox{and}\; \mu ( \ti Q )\leq c \mu ( F ).
\]
\end{proposition}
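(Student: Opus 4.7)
The plan is to adapt the stopping-time scheme of Proposition \ref{prop1} to the discrete scales of the Cantor construction, with the exponent $n-1$ replaced by $n-1-\delta'$ for a small $\delta'>0$ to be chosen. By the translation and dilation invariance of \eqref{flaplace} together with the comparability \eqref{new1}, normalize so that $\tilde{Q}$ plays the role of the ambient cube $\mathcal{S}$; write $\bar{u}$ and $\bar{\mu}$ for the rescaled solution and induced measure. It then suffices to produce a compact $F\subset \mathcal{C}\cap \tilde{Q}$ with $\mathcal{H}^{n-1-\delta'}(F)=0$ and $\bar{\mu}(F)\geq c^{-1}$.

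Fix large parameters $M$, $A$ and a large integer $L$ to be chosen. For each $x\in \mathcal{C}\cap \tilde{Q}$ let $\tilde{Q}(x,k)$ denote the descendant of $\tilde{Q}$ lying $k$ levels below $\tilde{Q}$ and containing $x$, and let $\tilde{Q}^*(x)$ be the shallowest $\tilde{Q}(x,k)$, $0\leq k\leq L$, satisfying
\[
\bar{\mu}(\tilde{Q}(x,k))\geq M\, s(\tilde{Q}(x,k))^{n-1-\delta'};
\]
if no such $k\leq L$ exists, set $\tilde{Q}^*(x)=\tilde{Q}(x,L)$. Let $\{\tilde{Q}_j'\}$ be the distinct cubes obtained, and as in \eqref{stoptime} split them into the good set $\mathfrak{G}$ (cubes which stopped strictly before level $L$), the bad set $\mathfrak{B}$ (level-$L$ cubes on which $f(\nabla\bar{u})/|\nabla\bar{u}|\geq M^{-A}$ at some point), and the ugly set $\mathfrak{U}$ (the remaining level-$L$ cubes).

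The three families are controlled in parallel with Section \ref{mainproof}. The members of $\mathfrak{G}$ are pairwise disjoint heavy cubes, so
\[
\sum_{\mathfrak{G}} s(\tilde{Q}_j')^{n-1-\delta'} \leq M^{-1}\sum_{\mathfrak{G}}\bar{\mu}(\tilde{Q}_j') \leq c/M,
\]
which can be made arbitrarily small. For the ugly cubes, construct an auxiliary domain $\bar{\Omega}''$ and solution $\bar{u}''$ as in the construction of $\Omega''$, $u''$ in Section \ref{advancedregres}, by removing the interior of every $\tilde{Q}_j'$ from $\mathcal{S}\setminus \mathcal{C}$; the removed cubes are corners, so the geometric hypothesis of Lemma \ref{ballminusball} persists and Lemmas \ref{derofuisbdd}--\ref{logfgradu} apply, yielding $\bar{\mu}\bigl(\bigcup_{\mathfrak{U}}\tilde{Q}_j'\bigr)\leq c/A$. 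For the bad cubes the density lower bound analogous to \eqref{MAt'j} gives $\bar{\mu}(\tilde{Q}_j')\geq c^{-1}M^{-A}s(\tilde{Q}_j')^{n-1}$, hence (using $s(\tilde{Q}_j')\geq \alpha^L s(\tilde{Q})$)
\[
\sum_{\mathfrak{B}} s(\tilde{Q}_j')^{n-1-\delta'} \leq c\, M^A\, \alpha^{-L\delta'}\sum_{\mathfrak{B}}\bar{\mu}(\tilde{Q}_j') \leq c\, M^A\, \alpha^{-L\delta'}.
\]

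The main obstacle is the coordinated choice of $L$, $M$, $A$ and $\delta'>0$: unlike Proposition \ref{prop1}, the power gauge $r^{n-1-\delta'}$ does not decay relative to $r^{n-1}$, so the bad sum $M^A\alpha^{-L\delta'}$ cannot be made small merely by shrinking the minimum scale. The resolution exploits the strict inequality $n/(n-1)>p/(p-1)$ in the sub-solution estimate \eqref{final-1}: this gap provides a quantitatively improved version of Lemma \ref{logfgradu} with constant proportional to $p-n$ (for $p$ near $n$), which permits $A$ to grow like $|\log \delta'|$ while $\bar{\mu}(\bigcup_{\mathfrak{U}}\tilde{Q}_j')$ remains bounded by $c/A$. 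Balancing the three smallness requirements isolates a positive value of $\delta'$ depending only on $p,n,c_*,\alpha,\beta$, and of order $p-n$ when $p\in[n,n+1]$, matching the bound $\delta\geq c^{-1}(p-n)$ claimed in Theorem \ref{alv13.2}. Once $\delta'$ is fixed, set $E=\bigcup_{\mathfrak{G}\cup\mathfrak{B}}\tilde{Q}_j'\cap\mathcal{C}$; the three estimates then show $\mathcal{H}^{n-1-\delta'}_{\delta''}(E)\leq \varepsilon$ and $\bar{\mu}(E)\geq c^{-1}$, and a Borel--Cantelli/Vitali iteration identical to the one following \eqref{Em} produces the required compact $F$.
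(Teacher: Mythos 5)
There is a genuine gap, and it is exactly at the point you flag as ``the main obstacle'': your scheme cannot close the bad-cube estimate, and the fix you sketch does not address it. With the power gauge $r^{n-1-\delta'}$ your bound $\sum_{\mathfrak{B}} s(\tilde Q_j')^{n-1-\delta'} \leq c\,M^{A}\alpha^{-L\delta'}$ is never small: $M^{A}\geq 1$ and $\alpha^{-L\delta'}\geq 1$, and since the final Borel--Cantelli step forces $\tau\to 0$, hence $L\to\infty$, the factor $\alpha^{-L\delta'}$ in fact blows up. The resolution you propose --- an ``improved'' Lemma \ref{logfgradu} with constant proportional to $p-n$ coming from the gap in \eqref{final-1}, allowing $A\sim|\log\delta'|$ --- only affects the ugly-cube bound $c/A$, which was never the problem (one may take $A$ as large as one likes already); it does nothing to the term $M^{A}\alpha^{-L\delta'}$, which gets \emph{worse} as $A$ grows. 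Moreover, running the stopping time with exponent $n-1-\delta'$ degrades Lemma \ref{derofuisbdd}: at a good stopping cube of side $t$ the effective density is $Mt^{-\delta'}$, so the gradient bound and the $\log M$ bound in Lemma \ref{logfgradu} become scale-dependent, introducing yet another uncontrolled $L$-dependence. In short, the dimension drop cannot be obtained by simply rerunning the Proposition \ref{prop1} machinery with a smaller exponent.

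The paper's mechanism is different in an essential way. The stopping time is run with the exponent $n-1$ (as in \eqref{stoptime}), and the gain $\delta'$ comes from an entropy-type estimate: Lemma \ref{finess} gives, for every intermediate cube $\tilde Q\in\m{E}$ sitting at least $c_2$ scales above some stopping cube, the lower bound $\bar\mu(\bar Q)\leq c_3\int_{O(\tilde Q)}\bar u\,\rd\nu$, where $\nu=\tilde L\bar v\geq 0$ is the Riesz measure of $\bar v=\log f(\nabla\bar u)$; summing over $\m{E}$, bounding $\int\bar u\,\rd\nu$ by $c\,\breve c\log M$ via the Lemma \ref{logfgradu}-type integration by parts (\eqref{new36}), and counting that each stopping cube $\tilde Q'$ has $\gtrsim\log(1/s(\tilde Q'))$ ancestors in $\m{E}$ (\eqref{new37}) yields $-\sum_{\tilde Q'\in\ti\La_M}\log(s(\tilde Q'))\,\bar\mu(\bar Q')\leq c^2\breve c\log M$ (\eqref{new38}). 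This shows that stopping cubes with side $\leq M^{-c^3\breve c}$ carry at most half the measure (\eqref{new39}) and are simply \emph{discarded} from $F$ rather than covered; the content estimate \eqref{new41} is then applied only to the surviving cubes, which are heavy at exponent $n-1$ and not too small, so the troublesome factor $s^{-\delta'}$ is bounded by $M^{1/2}$ when $\delta'=1/(2\breve c c^3)$. Your proposal contains no analogue of Lemma \ref{finess} or of the ancestor-counting step, and it also bypasses the hardest analytic content of the section: proving the lower bound on $\int\bar u\,\rd\nu$ near the interior critical point of $\bar u$, including the degenerate case $p=n$ (where the pointwise gain in \eqref{final-1} vanishes and one must argue via \eqref{new16}--\eqref{new20}), the planar case via quasiregular mappings where $\nu$ is atomic, and the compactness argument giving uniformity for $p\in[n,n+1]$. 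The correct intuition that the $p-n$ gap in \eqref{final-1} is the source of $\delta'\gtrsim p-n$ is there, but the argument as proposed does not produce a proof.
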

\begin{proof}
 We shall only show that the conclusion of Proposition \ref{prop2} is valid with $ \ti Q $ replaced by $ \ti Q_0 = $ the closed cube with side length 1/2 and center at 0 (denoted $\m{C}_{0}$ in section \ref{intro}). The general version of Proposition \ref{prop2} is proved in a similar way, as one sees from using (\ref{new1}) and arguing as in the construction of $ u' $ in (\ref{u'ismu}),
With this understanding, we simplify the proof of Proposition \ref{prop2} further by noting that if $ \la (r) = r^{n - 1 - \de'},\; 0 < r \leq 1$, then from measure theoretic type arguments  it suffices to show for given $ \ep, \tau > 0, $ that there exists $ \de', c $ as above and a compact set $ F \subset \m{C} $ with
\begin{align} 
\label{new3} 
\rh^{\la}_{\tau} ( F ) \leq \ep\; \mbox{and}\; \mu ( F ) \geq 1/c. 
\end{align} 
 To prove \eqref{new3} and in view of the proof of Theorem \ref{alv13.1} we shall need  some more notation: Let
$ \{B(x_l, \frac{\he}{10} )\}_1^{N_1}, x_l \in \ar \ti Q_{0},  $ be a Besicovitch covering of $ \ar \ti Q_0 $ and set
\[ 
Q_{0}:=\tilde{Q}_{0}\cup \left( \bigcup_{l=1}^{N_1} B(x_l, \frac{\he}{10} ) \right) 
\]
 where $ \he = \frac{1}{100n} \min ( \al, 1/2 - \be ) $ as earlier (see figure \ref{figureQ0}). 
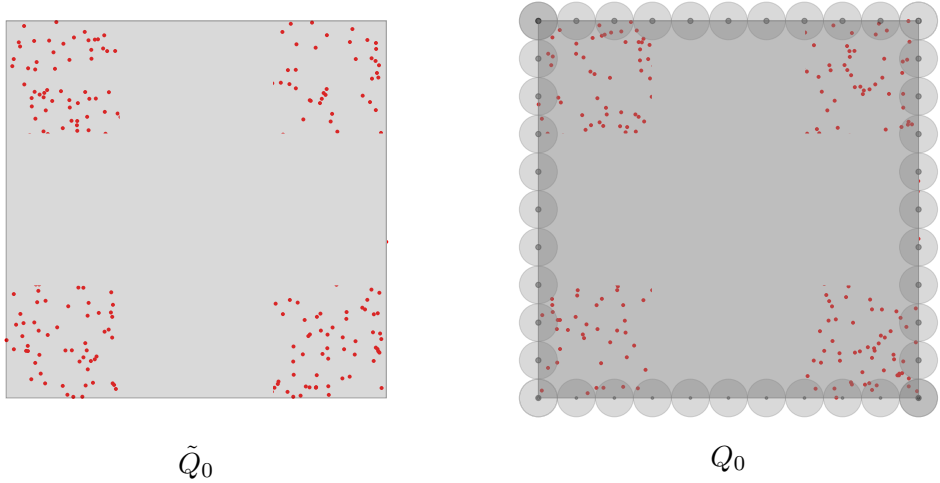
\begin{figure}[!ht]
\centering
\begin{tikzpicture}
\begin{scope}[shift={(-5,0)}]
\foreach \i in {1,...,500}
  \fill[red, opacity=2] (rnd*5cm, rnd*5cm) circle (.75pt);
  \draw[white, fill=white] (1.5,0) rectangle (3.5,5);
  \draw[white, fill=white] (0,1.5) rectangle (5,3.5);  
  \draw[fill=gray, opacity=0.3] (0,0) rectangle (5,5);
  \node[below] at (2.5,-.5) {\mbox{$\tilde{Q}_{0}$}};
  \end{scope}
  \begin{scope}[shift={(2,0)}]
\foreach \i in {1,...,500}
  \fill[red] (rnd*5cm, rnd*5cm) circle (.75pt);
    \draw[white, fill=white] (1.5,0) rectangle (3.5,5);
  \draw[white, fill=white] (0,1.5) rectangle (5,3.5);  
  \filldraw[gray, opacity=.3] (0,0) rectangle (5,5);
\foreach \x in {0, 0.5, ..., 5}
\filldraw[gray, opacity=.3] (\x, 0) circle (.25cm) (\x, 5) circle (.25cm)  (0, \x) circle (.25cm)  (5, \x) circle (.25cm);
\foreach \x in {0, 0.5, ..., 5}
\filldraw[opacity=.3] (\x, 0) circle (.5pt) (\x, 5) circle (1pt)  (0, \x) circle (1pt)  (5, \x) circle (1pt);
  \filldraw[gray, opacity=0.3] (0,0) rectangle (5,5); 
 \node[below] at (2.5,-.5) {\mbox{$Q_{0}$}};
  \end{scope}
\end{tikzpicture} 
 \caption{The cubes $\tilde{Q}_{0}$ and $Q_{0}$.}
\label{figureQ0}
\end{figure}

 If  $  \ti Q $ is a cube with center $z$ let  $ \ga Q =  \{ x = z + 2 \ga s( \ti Q) y : y \in Q_0 \}. $  We write $ Q $ for
$1Q. $  From our constructions we have for $ k = 1, 2, \dots, \, $
\begin{align}
\label{cubesaredisjoint}
\begin{split} 
&  \ti Q_{k, j } \subset  Q_{k, j} \subset (1+\he/2) \ti Q_{k,j}  \\
& (1+\he) \ti Q_{k,j} \cap (1 + \he) \ti Q_{k,j'}=\emptyset\; \mbox{for}\; j\neq j' \mbox{ and }\\
&\mbox{either}\; \ti Q_{k,j} \subset \ti Q_{k',j'} \; \mbox{or} \; \ti Q_{k,j} \cap \ti Q_{k',j'} = \es,  \mbox{ when }\; k>k' .  \end{split}
\end{align}
Let $ \ti \La $ be a finite disjoint covering of $ \m{C} $ by cubes in
$ \ti \Ga $ and let  $ \La $ be the collection of all $ Q_{k,j} $  with  $
\ti Q_{k,j}  \in \ti \La. $

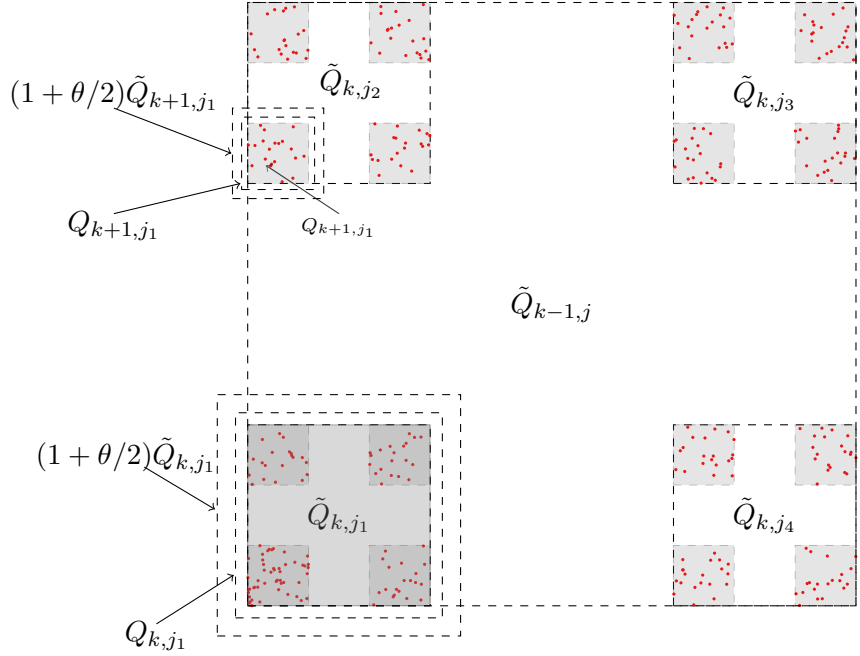
\begin{figure}[!ht]
\centering
\begin{tikzpicture}[scale=.8]
\foreach \w in {0,7}
\foreach \v in {0,7}
{
\draw[dashed] (\w,\v) rectangle (3+\w,3+\v);
}
\begin{scope}
\foreach \i in {1,...,30}
  \fill[red] (rnd*1, rnd*1) circle (.75pt);
  \end{scope}

\foreach \j in {0,2,7,9}
\foreach \k in {0,2,7,9}
{
\begin{scope}[shift={(\j,\k)}]
\foreach \i in {1,...,20}
  \fill[red] (rnd*1, rnd*1) circle (.75pt);
  \end{scope}
 } 
\foreach \w in {0,2,7,9}
\foreach \v in {0,2,7,9}
{
\draw[dashed, fill=gray, opacity=.2] (\w,\v) rectangle (1+\w,1+\v);
}
\draw[dashed] (-.2,-.2) rectangle (3.2, 3.2);
\draw[dashed] (-.5,-.5) rectangle (3.5,3.5);
\node at (5,5) {$\tilde{Q}_{k-1,j}$};

\node at (1.5,1.5) {$\tilde{Q}_{k,j_1}$};
\node at (1.7,8.7) {$\tilde{Q}_{k,j_2}$};
\node at (8.5,8.5) {$\tilde{Q}_{k,j_3}$};
\node at (8.5,1.5) {$\tilde{Q}_{k,j_4}$};
\node at (-2, 2.5) {$(1+\theta/2)\tilde{Q}_{k,j_{1}}$};
\node at (-1.5, -.5) {$Q_{k,j_{1}}$};
\draw[->] (-1.7,2.3)--(-.54,1.6);
\draw[->] (-1.5,-.3)--(-.23,.51);
\draw[fill=gray, opacity=.3] (0,0) rectangle (3,3);
\draw[dashed] (-.1,6.9) rectangle (1.1, 8.1);
\draw[dashed] (-.25,6.75) rectangle (1.25,8.25);
\node at (-2.2, 8.5) {$(1+\theta/2)\tilde{Q}_{k+1,j_{1}}$};
\node at (-2.2, 6.3) {$Q_{k+1,j_{1}}$};
\draw[->] (-2.2,8.25)--(-.27,7.5);
\draw[->] (-2.2,6.5)--(-.12,7);
\draw[dashed] (0,0) rectangle (10,10);
\draw[->, opacity=.5] (1.5,6.5)--(.3,7.3);
\node[below] at (1.5,6.6) {\tiny\tiny\mbox{$Q_{k+1, j_{1}}$}};
\end{tikzpicture} 
 \caption{The cubes $\tilde{Q}_{k-1,j_{1}}$, $\tilde{Q}_{k,j_{1}}$, $\tilde{Q}_{k+1,j_{1}}$, $Q_{k,j_{1}}$, and $Q_{k+1,j_{1}}$.}
\label{figureQkj1}
\end{figure}

\begin{remark}
Note that cubes $\tilde{Q}_{k,j}\in\ti \Lambda$ are closed cubes whereas the cubes $Q_{k,j}\in\Lambda$ are open. Moreover, 
figure \ref{figureQkj1} tells us (as an example) that $\ti Q_{k-1,j}\notin\ti\Lambda$ and $\ti Q_{k,j_{2}}\notin\ti\Lambda$. On the other hand, $\ti Q_{k,j_{1}}\in\ti\Lambda$ and $\ti Q_{k+1,j_{1}}\in\ti\Lambda$ and therefore by definition of $\Lambda$, $Q_{k,j_{1}}\in\Lambda$ and $Q_{k+1,j_{1}}\in\Lambda$. 
\end{remark}
 Let  $ \bar u $ be the
 positive weak solution to
\[
\Delta_{f} \bar u=\nabla \cdot \m{D}f(\nabla \bar u)=0 \; \mbox{in}\; \Om =  B (0, n ) \setminus \bigcup_{Q\in \La}  \bar Q
\]
with boundary values $1$ on $\partial B (0, n ) $ and $0$ on $ \ar Q$ for every $Q \in\La$. Extend  $ \bar u $ to $ B(0, n) $ by putting  $ \bar u = 0 $ on every $Q\in\La$. Let $\bar \mu$ be the measure associated with $ \bar u
$ as in (\ref{ast}). Let $ \bar v = \log f ( \nabla \bar u ) $ and define $ \ti L $ as in
Lemma \ref{v'issolnlemma} relative to $ \bar u. $ Recall from this lemma that $ \max ( \bar v, \eta ) $ is a weak sub solution to $ \ti L$ whenever $ \eta \in ( - \infty, \infty ). $
 Then $ \ti L \bar v = \nu $ weakly, where $ \nu $ is a locally finite positive Borel measure on  $ \Om \cap \{x:\; | \nabla \bar u | > 0 \}. $ In case $ p = 2, n = 2, $ we shall see that $ \nu $    is   a   locally finite atomic measure on $ \Om. $  

Next we state a key lemma.
\begin{lemma} 
\label{finess} 
Let $ \Om, \bar u, \bar \mu, \bar v, \nu, $ be as above and suppose $ \ti Q \in \ti \Ga \sem \ti \La. $  There exists  $ c_2, c_3, c_4   \geq 10^{5} $,  such that if
$ \ti Q'  \subset \ti Q, \, \ti Q'  \in \ti \La, $ and $ c_2 s ( \ti Q' ) \leq s ( \ti Q),$ then
\[ 
\int_{O} \bar u \rd \nu  \geq c_3^{-1} \bar \mu ( \bar Q ) \; \; \mbox{where}\;\; O = \left\{x \in  (1 +\he) \ti Q:\;  d ( x, \ar \Om ) \geq \frac{ s(\ti Q )}{c_4} \right\}
\] 
and $Q\in\Lambda$ is the cube associated with $\ti Q\in\ti\Lambda$. Here $c_2, c_4 $ depend only on $ p, n, c_*, \al, \be, $ and can be chosen independent of $p \in [n, n + 1]$ provided $ c_* $ in \eqref{prooff} is constant on this interval.  Also $ c_3^{-1} \geq (p - n ) c_5^{-1}, $ where $ c_5 $ has the same dependence as $ c_2. $ 

Moreover, if $ f = g^p, $ where $ g $ is as in Theorem \ref{alv13.2}, then $ c_3^{- 1} $ can be chosen to depend only on $ n, g, c_*, \al, \be, $ when $ p \in [n, n + 1]. $
\end{lemma}

\begin{proof}  
Let  $ \xi $ be the minimum of $ \bar u $ on  $ \ar(1+\he) \ti Q $ and let  
\[
G = \left\{x:\; \bar u ( x ) < \xi/ 2 \right\} \cap ( 1 + \he ) \ti Q. 
\]
We note that   
\[
d ( \ar G, \ar \Om )  \geq  c^{-1} s ( \ti Q )\; \mbox{and}\; \xi  \geq c^{-1} \max\limits_{ (1 + \he) \ti Q } \bar u 
\]
   thanks to  Harnack's inequality and  H\"{o}lder continuity of $ \bar u $ near $ \ar \Om $ (Lemmas \ref{fislegthanu} and \ref{uisholder}). From this note and our hyphothesis, we deduce that if   $ c_2 $ is large enough, then a component of $ G, $ say $ G', $ contains two disjoint cubes,  $ (1+ \he ) \ti Q_1, (1+ \he )  \ti Q_2, $  with $ \ti Q_1, \ti Q_2  \in \ti \Ga $  and
 \[  
 (c')^{-1}  s ( \ti Q_i ) \leq s ( \ti Q ) \leq c'  s ( \ti Q_i )\; \mbox{for}\; i = 1, 2, 
 \]
 where $ c' $ has the same dependence as $ c_2 $ in Lemma \ref{finess}.   Let  $ \xi_1 $ be the minimum of
$  \bar u $ on $  \ar ( 1 + \he ) \ti Q_1 \cup \ar (1+\he ) \ti Q_2.  $  Then
from our construction, the maximum principle for solutions to \eqref{flaplace},  and once again
Harnack's inequality - H\"{o}lder continuity of $ \bar u $ near $ \ar \Om, $ we see that  $ G' $  contains at least two components of $ G_1 =  \{ x:\;  \bar u ( x ) < \xi_1/2 \}. $  Moreover,
\begin{align} 
\label{new5}   
d ( \ar G_1, \ar \Om )   \geq  c^{-1} s ( \ti Q )  \mbox{ and } \xi_1   \geq c^{-1} \max_{ (1+\he) \ti Q } \bar u .  
\end{align} 
Let $ t_0$, $\xi_1/2   \leq t_0  <  \xi/2, $ be the largest $ t $ for which   there are at least  two components of  $ \{x:\; \bar u ( x ) < t \} $ contained in $ G'. $
Then there exists  $ \hat x \in  G' \cap \{x:\; \bar u ( x ) = t_0 \} $ such that $ \hat x $ lies on the boundary of  two different
components of $ \{x:\; \bar u ( x ) < t_0 \} $ in  $ G'. $  Also,
\begin{align} 
\label{new6} 
\bar u (\hat x) = t_0, \; \nabla \bar u ( \hat x ) = 0,\; \, d ( \hat x, \ar \Om ) \geq c_6^{-1} s ( \ti Q),\; \mbox{and}\; t_0 \geq c_6^{-1} \max_{(1+\he) \ti Q} \bar u,  
\end{align} 
 where $ c_6 \geq 1 $ has the same dependence as $ c_4 $ in  Lemma \ref{finess}. Indeed observe from Lemma \ref{localholderfornablau}  that $ \bar u $ has H\"{o}lder continuous derivatives in an open neighborhood of $ \hat x. $ So if $ \nabla \bar u (\hat x ) \not = 0 $ we  easily obtain a contradiction to the definition of $ t_0 $, using the implicit function theorem
 and the definition of a component.   From this contradiction we conclude that $ \nabla \bar u ( \hat x ) = 0. $ Existence of $ c_6 $ depending on $ p, n, c_{*}, \al, \be $ follows from \eqref{new5} which in turn was  proved using  Lemmas \ref{uisholder} and \ref{fislegthanu}. Also it is  easily checked from references providing proofs of these lemmas (see section \ref{preplemmas}) that constants may be chosen to  depend only on $ n, \al, \be $ when $ p \in [n, n + 1] $  provided $ c_* $  in \eqref{prooff} is chosen independent of $p$ in this interval.

For $ \hat x, t_0 $ as in \eqref{new6} we now  choose 
\[
z \in \ar \Om \sem \ar B (0, n)\; \mbox{with}\; d ( \hat x, \ar \Om \sem \ar B(0, n ) ) = | z - \hat x |. 
\]
Let $ z_1 $ be the first point on this line segment starting from $ \hat x $  with  $ \bar u ( z_1 ) = (1/2)t_0. $ Let
 $ [\hat x, z_1] $ denote the line segment from $ \hat x$ to $ z_1.$
Then   
\[
  (1/2)t_0\leq  \int_{[\hat x, z_1]} |\nabla \bar u | \rd\m{H}^1 
  \] 
  so there exists $ z_2 $ on $ [\hat x, z_1] $ and $ c_7 = c_7 (p,n,c_*,\al, \be) $  with
\begin{align} 
\label{new7} 
(1/2) t_0 \leq  | \nabla   \bar u ( z_2 ) | | \hat x - z_1 |\; \mbox{while}\;  d ( z_2, \ar \Om ) \geq c_7^{-1} s(\ti Q) 
\end{align} 
where the last inequality follows from our choice of $ z_1, $ basic geometry, and Lemma \ref{uisholder}. From \eqref{new6}, \eqref{new7},  we find $\rho$ such that
\[
 \rho=  \rho ( p, n, c_*, \al, \be ) \geq c^{ - 1}  s ( \ti Q )\;  \mbox{with}\; B ( \hat x, \rho ), B ( z_2, \rho ) \subset \Om. 
 \]
 Let  $ \Om' $ denote the convex hull  of $B(\hat x, \rho/2 )$ and $B ( z_2, \rho/2 )$.  Then from  Harnack's inequality, Lemma \ref{localholderfornablau}, a Poincare type inequality, and \eqref{new6}, \eqref{new7}, we have
\begin{align}  
\label{new8} 
\begin{split}
c^{-1} s ( \ti Q )^{n-p} ( \max_{(1+\he) \ti Q}  \bar u )^{p-1} & \leq \,   s ( \ti Q )^{n-p}    \bar u ( z_2 )^{p-1}  \\
&\leq c \int_{\Om' \cap \left\{ |\nabla \bar u | > 0 \right\} } \, \bar u |\nabla \bar u |^{p-2} \, |\nabla \bar v|^{2} \, \rd x \\
  & \leq  c^2  \int_{\Om' \cap \left\{ |\nabla \bar u | > 0 \right\}} \, \bar u  |\nabla \bar u|^{p-4} \, \sum_{i, j = 1}^n \bar u_{x_i x_j}^2 \, \rd x. 
\end{split}  
  \end{align} 
Using  Lemma \ref{uismu}  and  \eqref{new8}, it follows that    
\begin{align} 
\label{new9} 
\bar \mu ( \bar Q ) \leq  c   \int_{\Om' \cap \left\{ |\nabla \bar u | > 0 \right\}} \, \bar u  |\nabla \bar u|^{p-4} \, \sum_{i, j = 1}^n \bar u_{x_i x_j}^2 \, \rd x.
\end{align} 
Next we revisit the proof of Lemma \ref{v'issolnlemma} in order to estimate $ \nu$. 
\subsection{The case $p\geq n>2$}
In this case from \eqref{I'andI''}-\eqref{add1} we see for $ n > 2 $ and $ p \geq n $ that  if  $  \nabla  \bar u (x)  \not = 0, $ then 
\begin{align} 
\label{new10}  
\ti L \bar{v}=h\; \mbox{weakly}
\end{align}
where
\[
h =  f^{-1} ( I' + I'') = f^{-1} \, [ \, \mbox{tr}(BA)^2 - \frac{1}{f}\frac{1}{(p-1)^2} \nabla \bar{u} BABAB (\nabla \bar{u})^t \, ]
\]
and $ A = ( \bar u_{x_i x_j} ), B = ( f_{\eta_i \eta_j} ) $ are $ n \times n $ matrices. If $p>n$ we see from \eqref{trofeiszero} and  \eqref{final-1} that
\begin{align} 
\label{new11}   
\mbox{tr}(BA)^2 -\frac{1}{f}\frac{1}{(p-1)^2} \nabla \bar{u} BABAB (\nabla \tilde{u})^t   \geq  \frac{ p - n }{ n (n -1)(p-1)} \mbox{tr}(AB)^2 . 
\end{align} 
Moreover,  
\begin{align} 
\label{new12} 
\hat c\; \mbox{tr}(AB)^2  \geq  |\nabla \bar u |^{2p - 4}  \sum_{i, j = 1}^n  \bar u_{x_i x_j }^2 
\end{align} 
for some $ \hat c \geq 1 $ depending only on $ p, n, \al, \be, $ and $ c_* $ in \eqref{prooff}, as follows from positive definiteness and $ p - 2 $ homogeneity of $ B $ as well as symmetry of $ A $. Combining \eqref{new10}-\eqref{new12} we conclude for almost every $ x $ with $ \nabla \bar u (x) \not = 0 $ that
\begin{align} 
\label{new13} 
h \geq  c^{-1} (p - n) |\nabla \bar u |^{p - 4} \,  \sum_{i, j = 1}^n  \bar u_{x_i, x_j }^2 
\end{align} 
where $ c \geq 1 $ depends only on $ p, n, \al, \be,$ and $ c_* $ in \eqref{prooff}. Combining \eqref{new13}, \eqref{new10}, and \eqref{new9} we get
\begin{align} 
\label{new14}  
( p - n )\bar \mu ( \bar Q ) \leq \ti c \int_{\Om' \cap \left\{ |\nabla \bar u | > 0 \right\}  } \bar u\, \rd \nu \leq \tilde{c}
\int_{O} \bar u \rd \nu 
\end{align} 
where $ \ti c = \ti c ( p, n, c_*, \al, \be)$ and this constant can be chosen independent of $ p $ on $ [n, n+1]. $   From the definition of $ \Om' $ and \eqref{new14} we see that the first part of  Lemma \ref{finess} is true when $p>n$.  

To handle values of $ p$ near $ n$, $n \geq 3, $  we need to examine the case when  $ h = 0 $  (so $ p = n $) in  \eqref{new10}. Indeed, from  \eqref{trofeiszero} -  \eqref{final-1}   we see  for $ p = n $ that 
\begin{align*} 
f h  =  \left[  \mbox{ tr }(E^2) -  \frac{n}{n-1} \frac{ y E^2 y^t}{y y^t} \right] =  g (y)  \geq 0, 
\end{align*} 
 where  $ E = B_d' A_1 B_d'$,  $y  = \nabla \bar u  \m{S}  B_d'  $, $ A_1  = \m{S}^t A \m{S}$,  $B_d = \m{S}^t B  \m{S}, $ and $  B_d  = B'_d  B'_d $. Also $\m{S} $   is an  orthogonal matrix and $B_d $ a diagonal matrix as in section \ref{subsol}. If  $ g  (y)  =  0, y \not = 0,  $   and  
  $ E \not = 0 $, then since $ E $  is symmetric, it follows from basic matrix theory that  $ y $ is an eigenvector of $ E, $ so $ y E  = \mathcal{V} y $ for some $ \mathcal{V}\neq 0. $ Thus,
\[    
( \nabla \bar u \m{S}  B_d' ) B_d' A_1 B_d' =  \mathcal{V} ( \nabla \bar u \m{S}  B_d' )   
\] 
so since $ \m{S}, B_d' $ are invertible it follows that at $x$ we have $\nabla\bar u B  A = \mathcal{V} \nabla \bar u$. If we rewrite this in terms of $f$ and $\bar{u}$ we get
 \begin{align} 
 \label{new16}  
 (n-1) \nabla f ( \nabla \bar u ( x ) ) = \mathcal{V} \nabla \bar u ( x )  
 \end{align} 
where we have used the $n - 1 $ homogeneity of $ \m{D} f$. On the other hand at almost every $ x $ where  $ E = 0 $ we have
\begin{align}  
\label{new17}  
 A = ( \bar u_{x_i x_j } ) = 0\;  \mbox{since}\; B_d'\; \mbox{and}\; \m{S}\;\mbox{are invertible}. 
 \end{align} 
Assume that either \eqref{new16} or \eqref{new17} hold almost everywhere  in $ B(w, r) \subset \Om$ for some $w\in\Omega$ and $r>0$.  If $B( w, r ) \cap \{ x : \bar u ( x ) = t \} \neq \es,$  we assert that  
\begin{align}  
\label{new18}  
f( \nabla \bar u )\; \mbox{is constant on each component of}\; B ( w, r ) \cap \left\{ x : \bar u ( x ) = t \right\}. 
\end{align} 
To prove this assertion let $ x'  \in B ( w, r )$ and suppose that $ \nabla \bar u (x') \not = 0. $  Then 
\[
\pm \bar u_{x_i} (x') \geq n^{-1}  | \nabla \bar u ( x' ) | \; \mbox{for some}\; 1 \leq i \leq n. 
\]
Assume for example that $ i = n $ so that $ \bar u_{x_n} (x')  \geq |\nabla \bar u | (x')/n. $  Consider the mapping, $\Psi ( x_1, \dots, x_n ) = (x_1, \dots, \bar u ( x_1, ..., x_n ))$.  From the inverse function theorem and  Lemma \ref{localholderfornablau}  we see that in a neighborhood of $ \Psi (x'),  \Psi $ has a $ C^{1, \al'' } $ inverse
 $ \Ph $  and  $ f ( \nabla \bar u (x) ) $ is in $ W^{1,2} ( B ( x', \rho ) ) $ for some small $ \rho > 0. $ We claim that
 \begin{align}
 \label{new181}
 H =  f ( \nabla \bar u ) \circ \Ph\in W^{1,2} ( B ( \Psi (x'), \rho' ) )\; \mbox{for small}\; \rho' > 0. 
 \end{align}
 Here $H$ is considered as a function of $ x_1, \dots, x_{n-1}, \bar u $. One can prove \eqref{new181} for example by,
(a) approximating $ f ( \nabla \bar u ) $ in the $ W^{1,2} ( B ( x' , \rho ) ) $ norm by a sequence, $ ( q_j ) $ of $ C^\infty ( \rn{n} ) $ functions,  (b) using the chain rule and change of variables theorem to show that $ H_j  =  q_j  \circ \Ph \in W^{1,2} ( B ( \Psi (x'), \rho' ) ) $ with  norms bounded by a constant independent of $j, $ (c) showing that $ H_j \rar H $ in the norm of $ W^{1,2} ( B ( \Psi (x'), \rho' ) ). $ 

From \eqref{new181} and well known properties of Sobolev functions it follows that $ H $ is ``absolutely continuous on most lines''. Therefore, in our situation, if $\hat z = ( \Psi_1 (x'), \dots, \Psi_{n-1} (x') ), $ then for almost every $ t $ with $ | t - \Psi_n ( x' ) | < \rho'/2$ it is true that in a neighborhood of $ \hat z, $ we have $H(\cdot, t )\in W^{1,2} $ as a function of $ x_1, \dots, x_{n-1}. $ Let 
\[
\hat e_i =(0, \ldots, 0, 1, 0, \ldots, 0,  - \bar u_{x_i}/ \bar u_{x_n} ( x_1, \dots, x_{n-1}, t ))
\]
denote the vector with  1 in the $i$ th position and $  - \bar u_{x_i}/ \bar u_{x_n} ( x_1, \dots, x_{n-1}, t ) , $ in the $n$ th position, for $ 1 \leq i \leq n - 1, $. Then from either \eqref{new16} or \eqref{new17} we have for $\rh^{n-1}$ almost every  $(x_1, \dots, x_{n-1})$ in a neighborhood of $\hat z$ that 
\begin{align} 
\label{new19} 
\frac{ \ar H}{ \ar x_i } ( \cdot, t )  =  \nabla f ( \nabla \bar u ) \cdot  \hat e_i = 0. 
\end{align}   Transferring this information to $ f ( \nabla \bar u ) $ we see first for almost every $t$  that $ f ( \nabla \bar u ) $ is constant on  $ \{ x : \bar u ( x ) = t \} \cap B ( x', \rho/2 ). $ Second  from continuity of $ f ( \nabla \bar u )$ and $\bar u, $ we then conclude this statement for every $ t. $ Finally, the definition of a component and continuity of $ f ( \nabla \bar u )$, $\bar u$ imply assertion \eqref{new18}.

Armed with \eqref{new18} we can show for $ G' , t_0, $ as in \eqref{new6} and $ \xi_1 $ as in \eqref{new5}  that if $ O'  $ is an open set in $ \Om $ containing $ \hat K =  \{ x \in G'  : \xi_1/2 \leq  \bar u (x)  \leq t_0 \}$ then
\begin{align}  
\label{new20}  
\nu ( O' )   > 0 . 
\end{align}  
Indeed otherwise, by our construction, $ n $ homogeneity of $ f$, \eqref{new6}, and \eqref{new18} we have $ \nabla \bar u = 0 $ on  $ \ar G' \cap \{ x: \bar u ( x ) =  t_0\} $  which easily leads to a contradiction by a barrier argument.
 In fact, if 
 \[
 B ( y, \hat r ) \subset \hat K \cap \{ u < t_0 \}\; \mbox{with}\; \hat y \in \ar B ( y, \hat r ) \cap \{ x:\; \bar u ( x ) = t_0 \}
 \]
 then from the Hopf maximum principle $ | \nabla \bar u ( \hat y ) | >  0. $  From this contradiction we conclude that
\eqref{new20} is valid when $ p  = n  \geq 3. $

\subsection{The case $n=p=2$} 
In this case we note from \eqref{prooff} and the computation in Lemma \ref{v'issolnlemma} that
 $ \ti L \bar v =   0  $ weakly on $ \{ x: \nabla \bar u ( x ) \not = 0\} $ and $ \ti L $ is uniformly elliptic where
\[
 \ti L \bar v =  \sum_{k, j= 1}^2 \frac{\ar}{\ar x_k} ( f_{\eta_k \eta_j}  \frac{ \ar \bar v }{ \ar x_j}  ) 
\] 
 as in section \ref{intro}. To analyze this  case let $\hat x\in\Omega$ be any point with $\nabla \bar u ( \hat x ) = 0. $  We temporarily  use complex notation and write  $ \bar u_z = (1/2)  (\bar u_{x_1}  - i \bar u_{x_2} ) $ where $ i =
  \sqrt{ - 1}. $ We note that $ \bar u_z $ is a $ k-$quasiregular mapping of $ \Om $, where $ k = k ( p, n, c_{*}) $ (see \cite[16.4.3]{AIM09} for this fact and more on quasiregular mappings in the plane). From properties of quasiregular mappings we see that the zeros of $ u_z $ in $ \Om $  are isolated.  Next we note from the factorization theorem for quasiregular mappings (see \cite[Corollary 5.5.4]{AIM09}) that  $  \bar u_z =  \mathfrak{t} \circ \mathfrak{s} $ where $ \mathfrak{t}$ is analytic in $  \mathfrak{s}( \Om ), $  $ \mathfrak{s}$ is  a  quasiconformal mapping of  $ \rn{2}, $ and $ \mathfrak{s} ( \hat x ) = 0 . $  From  local properties of analytic functions, and $ \rn{2} $ quasiconformal mappings, as well as \eqref{prooff},  it follows  that
  there exists $ \ti r > 0 $ such that $ B ( \hat x, 8 \ti r ) \subset \Om  $  and if $ 0 < \rho \leq 2 \ti r, $ then
  \begin{align}  
  \label{new21}  
  0 < c_-^{-1}    f ( \nabla \bar u ( x ))  \leq  f ( \nabla \bar u (y) )   \leq c_-
 f ( \nabla \bar u (x) )
 \end{align} 
whenever $x, y \in B ( \hat x,  2 \rho ) \sem \bar B( \hat x, \rho/4)$. Here $ c_- \geq 1 $  may  depend on various quantities but is independent of $ \rho $ .  Using  \eqref{new21}, standard Caccioppoli type estimates for linear divergence form
PDE, and H\"{o}lder's inequality  we find that
\begin{align} 
\label{new22} 
\begin{split}
  c^{-1}  \left( \int_{ B ( \hat x, \rho ) \sem \bar B ( \hat x, \rho /2 )} | \nabla \bar v | \rd x \right)^2 & \leq  \rho^2
 \int_{ B ( \hat x, \rho ) \sem \bar B ( \hat x, \rho /2 )} | \nabla \bar v |^2 \rd x  \\
 & \leq  c  \int_{ B ( \hat x, 2 \rho ) \sem \bar B ( \hat x, \rho/4 )} | \bar v - \bar v ( \hat x  + ( \rho, 0) )  |^2 \rd x \\
 & \leq c^{2 } \rho^2
 \end{split}
 \end{align} 
 where again $ c \geq 1 $ is a positive constant independent of $ \rho. $  Putting $ \rho = 2^{-l} \ti r $ in \eqref{new22} and summing over $ l = - 1, 0, \dots, $ we find that
\begin{align} 
\label{new23}  
\int_{ B ( \hat x, 2 \ti r ) } | \nabla \bar v  | \rd x  < c^* \ti r <  \infty.   
\end{align} 

In view of \eqref{new21}-\eqref{new23} we can now use a more or less standard argument to show that if $ 0 \leq \chi \in C_0^\infty ( B ( \hat x, 2 \ti r ))$ then
\begin{align} 
\label{new24}  
\int_{ B ( \hat x, 2 \ti r ) } \sum_{k, j = 1}^2 \, f_{\eta_k \eta_j} (\nabla \bar u ) \, \frac{ \ar \bar v }{ \ar x_j} \, \frac{\ar \chi}{\ar x_k} \rd x = -  \hat a \,  \chi ( \hat x )  
\end{align}  
for some $ \hat a > 0. $
For completeness we give the proof of \eqref{new24} here. To do this let $ \si \in  C_0^\infty ( B ( \hat x, 2 \ti r ))$ with $ \si = 1 $ on $ \bar B ( \hat x, \ti r ).$ If $  \ph \in C^\infty ( B ( \hat x, 2 \ti r ) \sem \bar B ( \hat x, \rho/2) )$ we first show that for  $\rh^1$ almost every $ \rho$ with $0 < \rho < \ti r,  $
\begin{align} 
\label{new25} 
 \int\limits_{ B ( \hat x, 2 \ti r ) \sem \bar B ( \hat x, \rho) } \sum_{k, j = 1}^{2} f_{\eta_k \eta_j} (\nabla \bar u ) \frac{ \ar \bar v }{ \ar x_j} \frac{\ar ( \ph \si ) }{\ar x_k} \rd x
= \int\limits_{ \ar B ( \hat x, \rho) } \sum_{k, j = 1}^2 f_{\eta_k \eta_j} (\nabla \bar u ) \frac{ \ar \bar v }{\ar x_j} \xi_k\, \ph \rd\rh^1 
\end{align}   
where $ \xi = ( \xi_1, \xi_2 ) $ denotes the inward unit normal to $ \ar B ( \hat x, \rho). $  To verify \eqref{new25} for small $ \hat \de > 0, $  let  $ \psi \in C_0^\infty ( [\rho - \hat \de, \infty ) ) $ with $ \psi \equiv 1 $ on  $ [ \rho,  \infty ). $ Put $ \hat \psi ( x ) = \psi ( | x - \hat x | ), x \in \rn{2}, $ and  replace $ \ph\si $ by $ \ph \hat  \psi \si  $
on the left hand side of  \eqref{new25}.  Then the resulting integral is now zero since $ \ti L \bar v = 0 $ weakly in $ B ( \hat x, 2 \ti r ) \sem \{\hat x\}. $
Using this fact, the Lebesgue differentation theorem, letting $ \hat \de \rar 0, $  and  doing some arithmetic, we eventually obtain \eqref{new25}.
 Next from \eqref{new22} and a weak type estimate we see there exists $ \rho' $ with $\rho/2 \leq \rho' \leq \rho $ such that
\begin{align} 
\label{new26}  
\int_{\ar B ( \hat x, \rho' ) } | \nabla \bar v | \, \rd\rh^{1} \leq c'
\end{align} 
where $ c'$ is independent of $ \rho. $ Using \eqref{new23}, \eqref{new25}, and \eqref{new26}, we find for  a sequence $ (\rho_l ) $ with $ \lim_{l \rar \infty} \rho_l = 0 $ and  $ 0 \leq \chi \in C_0^\infty ( B ( \hat x, 2 \ti r ))$ that
  \begin{align} 
\begin{split}
  \label{new27} 
\int\limits_{ B ( \hat x, 2 \ti r )} \sum_{k, j = 1}^{2} f_{\eta_k \eta_j} (\nabla \bar u ) \frac{ \ar \bar v }{ \ar x_j} \frac{\ar ( \chi \si ) }{\ar x_k} \rd x &= 
  \lim\limits_{l \rar \infty}  \int\limits_{ B ( \hat x, 2 \ti r ) \sem \bar B ( \hat x, \rho_l) } \sum_{k, j = 1}^2  f_{\eta_k \eta_j} (\nabla \bar u ) \frac{ \ar \bar v }{ \ar x_j}
 \frac{\ar ( \chi \si ) }{\ar x_k} \rd x
     \\ 
     &   =\lim\limits_{ l  \to \infty }  \int\limits_{ \ar B ( \hat x, \rho_l ) } \sum_{k, j = 1}^2  f_{\eta_k \eta_j} (\nabla \bar u ) \frac{ \ar \bar v }{\ar x_j} \xi_k\,  \chi \rd\rh^1
\\ 
     &= \chi ( \hat x )  \lim_{ l  \to \infty }  \int_{ \ar B ( \hat x, \rho_l ) } \sum_{k, j = 1}^2 \, f_{\eta_k \eta_j} (\nabla \bar u ) \frac{ \ar \bar v }{\ar x_j} \xi_k \, \rd\rh^{1}  
\\     
    & 
    =      -  \chi (\hat x )  \hat a  
       \end{split}
      \end{align} 
for some real $\hat a. $  Now \eqref{new24} follows from \eqref{new27} and the observation that $ \chi \si $ can be replaced in \eqref{new27} by $ \chi $ since $ \chi ( 1 - \si ) $ has
compact support in $ B (  \hat x, 2 \ti r ) \sem \{ \hat x \} $ and $ \ti L \bar v = 0 $ weakly in $ B ( \hat x, 2 \ti r ) \sem \{ \hat x \}. $  Finally to show $ \hat a > 0 $ we note that \eqref{new25} remains true if $ \ph $ is replaced by $ \bar v, $ as follows from approximating $ \bar v $ in the $ W^{1,2}  ( B ( \hat x, 2 \ti r ) ) \sem \bar B ( \hat x, \rho/2) )$  norm
by smooth functions and taking limits using Lemma \ref{localholderfornablau}. Doing this we deduce from the left hand integral in \eqref{new25} that
\begin{align*} 
\int\limits_{ B ( \hat x, 2 \ti r ) \sem \bar B ( \hat x, \rho_l ) } \sum_{k, j = 1}^2  f_{\eta_k \eta_j} (\nabla \bar u ) \frac{ \ar \bar v }{ \ar x_j} \, \frac{\ar ( \bar v \si ) }{\ar x_k} \rd x
& \geq   c^{-1}  \int\limits_{ B ( \hat x,  \ti r ) \sem \bar B ( \hat x, \rho_l) } |\nabla \bar v |^2  \rd x \\
 &\hspace{1cm} -  \, c \int\limits_{ B( \hat x, 2 \ti r )\sem B( \hat x,  \ti r )  }  | \nabla \bar v | | \bar v | | \nabla \si | \rd x 
 \end{align*}
where $ c $ depends only on $ p, n, c_{*}, \al, \be. $ Moreover from the right hand integral in this inequality and \eqref{new21} we find that  
\begin{align*}
\int\limits_{ \ar B ( \hat x, \rho_l ) } \sum_{k, j = 1}^2 f_{\eta_k \eta_j} (\nabla \bar u ) \frac{ \ar \bar v }{\ar x_j} \xi_k  \, \bar v  \rd\rh^{1} = \bar v ( \hat x + ( \rho_l, 0 ) )  \int\limits_{ \ar B ( \hat x, \rho_l) } \sum_{k, j = 1}^2 f_{\eta_k \eta_j} (\nabla \bar u ) \frac{ \ar \bar v }{\ar x_j} \xi_k \,  \rd\rh^{1}+T_l  
\end{align*}
where $ | T_l  |  \leq  \bar c $ and $ \bar c $ is independent of $l.$  If $ \hat a = 0$ in \eqref{new27}, then from the above estimates it follows easily that
$ \bar v \in  W^{1,2} ( B ( \hat x, \ti r ) ). $   However  then linear elliptic PDE theory yields that $ \bar v $ is bounded in $ B ( \hat x, \ti r/2 ), $ which is a contradiction. Thus $ \hat a \neq 0. $  Using this fact and comparing the above inequalities we see that
\[ 
+ \infty  =  \lim\limits_{l \to \infty}  \bar v ( \hat x + ( \rho_l, 0 ) )  \int\limits_{ \ar B ( \hat x, \rho_l) } \sum_{k, j = 1}^2 f_{\eta_k \eta_j} (\nabla \bar u ) \frac{ \ar \bar v }{\ar x_j} \xi_k \,  \rd\rh^{1}.
\] 
Since 
\[
\bar v ( \hat x + ( \rho_l, 0 )) \to -  \infty\; \mbox{as}\; l \to \infty
\]
it follows that necessarily $ \hat a > 0. $ From \eqref{new24} we see that $ \ti L \bar v$ may be regarded weakly as an atomic measure on $ \Om $ when $ p = 2, n = 2 $ and hence
\eqref{new20} is also valid when $ n = 2, p = 2. $

We now are in a position to finish the proof of  Lemma \ref{finess} when $ p = n $ and for a general $ f, $ as well as when $ f = g^p, p \in [n, n+1], $ and $ g $ is as in Theorem \ref{alv13.2}. We consider first the case when $ f = g^p, $ as the compactness argument in either case is essentially
  the same.

 We shall need some more notation. For fixed $ \al, \be, $ let 
 \[
 \ti \Ga_m = \{\ti Q^{(m)} \},\; m = 1, 2, \dots,  
  \]
  be collections of cubes with side lengths defined as in section \ref{intro} with $ a_1, a_2, ..., $ replaced by  $a_1^{(m)}, a_2^{(m)}, \ldots,$ where $ 0 < \al \leq a_{k}^{(m)} < \be, $ for $ k, m = 1, \dots . $ Let $ \m{C}_m $ denote the corresponding Cantor set and suppose $ \ti \La_m $ is a finite covering of $ \m{C}_m $  by disjoint cubes in $ \ti \Ga_m .$ Define $
  Q^{(m)} $ relative to $ \ti Q^{(m)} $ in the same way that $ Q_0 $ below \eqref{new3} is defined relative to $ \ti Q_0 $ and set  
  \[
   \La_m := \{ Q^{(m)} :\; \ti Q^{(m)} \in \ti \La_m \}\; \mbox{and}\; \Om_m := B (0, n ) \sem \bigcup_{ Q^{m} \in \La_m} \bar Q^{(m)}.
    \]
Suppose $ (p_m) $ is a sequence of points in $ [n, n + 1] $ with $ \lim_{m \to \infty} p_m = \hat p. $  Let $ f_m = g^{p_{m}} $ and  let $ \bar u_m $ be the weak solution to \eqref{flaplace} relative to $f_m $ on $ \Om_m $ with continuous boundary values, 1 on $ \ar B (0, n ) $ and 0 on $ \ar Q^{(m)}$ for every $Q^{(m)} \in \La_m$. Extend $  \bar u_m $ to $ B (0, n ) $ by putting $ \bar u_m = 0 $ on $Q^{(m)}$ for every $Q^{(m)}\in \La_m$. Let $\bar \mu_m$ be the measure associated with $ \bar u_m $ as in \eqref{ast} and let $ \bar v_m = \log
 f _m ( \nabla \bar u_m ) . $
 Finally define  $ \ti L_m \bar v_m = \nu_m, $ weakly as in
Lemma \ref{v'issolnlemma} relative to $ \bar u_m, f_m, $ on $ \{ x : \nabla u_m (x)  \not = 0 \} $ when $ n \geq 3 $ and on $ \Om_m$ when $ n
= p = 2 $ (see \eqref{new24}).

From \eqref{new20} and \eqref{new14}  we see that if Lemma \ref{finess} is false for $c_4$ sufficiently large and $ n \geq 3$, then there exists $ \ti Q^{(m)} \in \ti \Ga_m $ with
\begin{align} 
\label{new28}  
0 <  \int_{O_m \cap \left\{ | \nabla \bar u_m | > 0 \right\} } \bar u_m \rd \nu_m  =  b_m  \mu_m (\bar Q^{(m)} )
\end{align} 
  where 
  \[
  O_m = \left\{x \in  (1 +\he) \ti Q^{(m)}:\;  d ( x, \ar \Om_m ) \geq
 \frac{ s(\ti Q^{(m)} )}{c_4} \right\}\;  \mbox{and}\; 0 < b_m \to 0\; \mbox{as}\; m \to \infty. 
 \]
Let  $ z_m $ denote the center of $ \ti Q^{(m)} $ and let $ \hat \Om_m =  \{ y:  z_m  + s(\ti Q^{m}) y  \in \Om_m \} $. Put
\begin{align*} 
\hat u_m ( y ) = \frac{\bar u_m ( z_m + s(\ti Q^{m}) y )}{ \max\limits_{ 2 \ti Q^{(m)} }  \bar u_m }\; \mbox{whenever} \; y\in \hat \Om_m . 
\end{align*} 
Using translation and dilation invariance of \eqref{prooff} we see that $ \hat u_m  $ is a weak solution to \eqref{flaplace} in $\hat \Om_m. $ Let $ \hat \mu_m $  denote the measure corresponding to $ \hat u_m$ with $ f$ and $u$ replaced by $f_m$ and $\hat u_m$. Then from Lemma \ref{uismu} and Harnack's inequality we find from estimates similar to those in \eqref{new1} that
\begin{align} 
\label{new30} 
c^{-1} \leq \hat \mu_m ( \m{S} ) \leq  \max_{1000 \m{S}} \hat u_m  \leq c \hat \mu_m  ( 2000 \m{S} ) \leq c^2 
\end{align} 
where $ c \geq 1 $ is independent of $ f_m, p_m \in [n, n+1] $ for fixed $ c_*$ in \eqref{prooff}. Once again we emphasize that this independence follows from the fact that the constants in Lemmas \ref{fislegthanu}-\ref{localholderfornablau} can be chosen independent of these quantities.  Let $ \hat v_m = \log f_m ( \nabla \hat u_m ). $  Then  $ \hat v_m  $ is a weak sub solution to $\hat L_m $ in the interior of  $ 1000 \m{S} \cap \{ x : \nabla \hat u_m \not = 0 \} $  where $ \hat L_m $ is defined as in Lemma \ref{v'issolnlemma} relative to $ \hat u_m, f_m. $ Let $ \hat \nu_m $ be the corresponding measure. From \eqref{new28}-\eqref{new30} we deduce that if $ \hat O_m = \{ y : z_m + s ( \ti Q_m ) y \in O_m \}, $ then
\begin{align} 
\label{new31} 
\int_{\hat O_m } \hat u_m  \rd \hat \nu_m \to 0\; \mbox{as}\; m \to \infty. 
\end{align} 
Using \eqref{new30},  Lemmas \ref{fislegthanu}-\ref{localholderfornablau}, the fact that  $ d ( \cdot, \ar \hat \Om'_m ) $ is Lipschitz,
 and  Ascoli's theorem we see there exists  sub sequences, $ ( \hat \Om_m' ) $ of $ ( \hat \Om_m ) $ such that  $ \hat \Om'_m \cap B ( 0, R ) $ converges to $ \hat \Om \cap B ( 0, R ) $ for each $ R > 0 $ in the Hausdorff distance metric and  $ ( \hat u_m' ) $ of $ (\hat u_m ) $ with $ ( \hat u_m' ) $  converging uniformly to $ \hat u $ in  the interior of $ 1000 \m{S}. $  We also can choose the sub sequence so that  $ \nabla \hat u'_m  $ converges uniformly to $ \nabla \hat u$ on compact subsets of $\hat \Om.$ Using these facts it is easily seen that $ \hat u $ is a weak solution to \eqref{flaplace} with $ \hat f = g^{\hat p} $ in the interior of $ 1000 \m{S} \cap \hat \Om $  and  $ \hat u $ is continuous in the interior of $ 1000 \m{S} $ with $ \hat u = 0 $ on $ \ar \hat \Om \cap 1000 \m{S}. $ Let $ \hat \mu $ be the measure corresponding to $ \hat u $ and let $ \hat \nu $ be the measure corresponding to $ \hat v. $ Then for $ n \geq 3 $ we may also assume that  $  \hat L'_m \hat v
 '_m = \hat \nu_m' $  converges weakly to $ \hat L \hat v = \nu $ as measures on compact subsets in the interior of $ \m{S} \cap \{ x : \nabla \hat u ( x ) \neq 0\}. $ Indeed from the definition of $ f_m $ and uniform convergence of $ (\nabla \hat u'_m ) $  we see that $ ( f_{m} )_{\eta_k \eta_j} ( \nabla \hat u'_m ), 1 \leq k, j \leq n, $ converges uniformly on compact subsets in the interior of
$ \m{S} \cap \{ x : \nabla \hat u ( x ) \neq 0\}. $ Also from Lemma \ref{localholderfornablau} we deduce that for large $ m, $ \, $ \hat v'_m $ is uniformly bounded in $ W^{1,2} $ on an open set with compact closure in
 $ \m{S} \cap \{ x : \nabla \hat u ( x ) \neq 0\}. $ Using these facts and well known theorems on weak convergence in $ W^{1,2} $ we see that if $ n \geq 3, $ then a sub sequence of $ ( \hat v'_m ) $ (also denoted $(\hat v'_m ))$ yields,
\begin{align} 
\label{new32} 
\begin{split}
-  \lim_{m \to \infty} \int \ph \rd \hat \nu'_m &= \lim_{m\to \infty} \int \sum_{j,k=1}^{n} (f_m)_{\eta_k \eta_j} ( \nabla \hat u'_m ) (\hat v'_m)_{x_j} \ph_{x_k} \rd x  \\
&= \int \sum_{j,k=1}^{n} \hat f_{\eta_k \eta_j} ( \nabla \hat u ) \hat v_{x_j} \ph_{x_k} \rd x \\
&= -\int \ph d \hat \nu 
\end{split}
\end{align} 
whenever $ \ph $ is infinitely differentiable with compact support in  $ 1000\m{S} \cap \hat \Om \cap \{ x : \nabla \hat u ( x ) \neq 0 \} $.
If  $ n = 2 $ we claim that $ \hat \nu'_m $ converges weakly to $ \hat \nu $ on compact subsets in the interior of $1000 \m{S} \cap \hat \Om . $ To see this we note from
 the discussion preceding \eqref{new21} that there exists $ t_m $ analytic in $ s_m ( \hat \Om'_m ) $  and $ s_m $ quasiconformal in $ \rn{2} $ with  $ ( u_m')_z = t_m \circ s_m $ in $  \hat \Om_m' $.  From normal family type arguments for $ \rn{2} $ quasiconformal mappings and analytic functions we see that there exist subsequences of $ (t_m), (s_m) $ (also denoted $ (t_m), (s_m) $ ) with $ (s_m) $ converging to $s$ a quasiconformal mapping of $ \rn{2}, $ uniformly on compact subsets of $ \rn{2}, $ and $ t_m $ converging uniformly to $ t $ analytic, uniformly on compact subsets in the interior of  $ s ( 1000 \m{S} \cap \hat \Om).$ Using these facts and the argument principle for analytic functions we conclude that the constants in \eqref{new21} - \eqref{new23} can be chosen independent of $m. $ From this conclusion, uniform convergence of $ ( \nabla \hat u'_m)$ and simple estimates in \eqref{new24} we obtain \eqref{new32} for $ \ph $ infinitely differentiable with compact support in
  $1000\m{S} \cap \hat \Om$. Let 
\[
 \hat O = \left\{ x \in (1+\he ) \m{S}:\; d ( x, \ar \hat \Om ) > c_4^{-1} \right\}. 
 \] 
 Then from \eqref{new32} and \eqref{new31} we have
\begin{align} 
\label{new33} 
\hat \nu (\hat O) = 0. 
\end{align} 
On the other hand,  we can essentially repeat the argument from \eqref{new6}-\eqref{new27} since the same constants in Lemmas \ref{fislegthanu}-\ref{localholderfornablau} as earlier can also be used for $ \hat u. $  Moreover,  since $ 1000 \m{S} \cap \hat \Om_m $ converges in the Hausdorff distance sense to $ 1000 \m{S} \cap \hat \Om $ the Harnack chains used to obtain
the analogue of \eqref{new20} can all be chosen in $ \hat \Om_m $ for $ m $ large enough. A more cut to the chase type argument is to observe that if $ \hat x_m, t_0^{(m)}, G_m' $ denote the sets in \eqref{new6}, and $ \xi_1^{m} $ is as in \eqref{new5} relative to
$ \hat u_m' $ in $ (1+\he) \m{S} \cap \hat \Om'_m,$  then these sequences converge pointwise and in the Hausdorff distance sense to $ \hat x', \hat t_0, \hat \xi_1 , \hat G'  \subset (1+\he) \m{S} .$ Moreover \eqref{new5},\eqref{new6} are now valid for $ \hat u $ in this symbology. Repeating the argument leading to \eqref{new14} we see that in order to avoid a contradiction to \eqref{new33} we must have $ \hat p = n. $  Now repeating the argument from \eqref{new13} to \eqref{new27} we also rule out the case $ \hat p = n $ and so for
$ c_2, c_4 $  large enough, obtain  $ \hat \nu ( \hat O ) > 0, $ a contradiction to  \eqref{new33}.  The proof of Lemma \ref{finess} is now complete when $ f = g^p$. For a general $ f $ it follows from \eqref{new14} that we need only consider the case $ p = n. $ If $ p = n, $ we again argue by contradiction and use a compactness argument similar to the above to get a contradiction. We omit the details.  
\end{proof}

Following \cite[Chapter IX, Theorem 2.1]{GM05},  we continue the proof of Proposition \ref{prop2} by repeating the stopping time argument in Theorem \ref{alv13.1} only with cubes in $ \ti \Ga $ rather than balls.  First let $ M > > 1 $ be so large that if $ \ti Q \in \ti \Ga $ and $ \mu (\ti Q ) \geq M s ( \ti Q )^{n-1}, $ then
 $ s ( \ti Q ) \leq \min ( \tau, 10^{-5} ). $ This choice is possible as we see from \eqref{new2}. Let $ s << \tau $ and choose a covering $ \ti \La_{M} = \m{B}_M \cup \m{G}_M $ of  $ \m{C} $ by cubes in $ \ti \Ga , $ according to the following recipe. Either $ x \in \m{C} $ lies in a cube in
\[ 
\m{G}_{M}:=\left\{\ti Q \in \ti \Ga:\; s ( \ti Q ) > s, \; \mu(\ti Q) \geq M s(\ti Q)^{n-1}, \; \mbox{and}\; \ti Q \; \mbox{is maximal}\right\} 
\]
or  no such cube exists and  $x$ lies in a cube in
\[ 
\m{B}_{M}:=\{\ti Q \in  \ti \Ga:\; \, s ( \ti Q ) \leq s\;  \mbox{and}\; \ti Q\; \mbox{is maximal}\}.
\] 
Note that $\m{B}_{M}\cup\m{G}_{M}$ is a disjoint covering of $\m{C}$. As earlier let $ \La_{M} = \{ Q : \ti Q \in \ti \La_{M} \} $ and define
$ \bar u $ as below \eqref{cubesaredisjoint} relative to $ \La_{M}. $ Then $ \bar u $ is a solution to \eqref{flaplace} in $ \Om = B (0, n ) \sem \cup_{Q \in \La_{M}} \bar Q $ and continuous in $ B (0, n ) $ with $ \bar u = 0 $ on $ \cup_{Q \in \La_{M}} \bar Q $ while $ \bar u = 1 $ on $ \ar B (0, n ) . $ Let $\bar \mu$ be the measure associated with $\bar u$ as in \eqref{ast}. From the maximum principle for solutions to \eqref{flaplace} we see that $ \bar u \leq u $ in $ \m{S} $ and as in \eqref{new1} and \eqref{new2} that for $ \ti Q \in \ti \La_{M}, $
\begin{align} 
\label{new34} 
\begin{split}
&s (\ti Q )^{1-n} \bar \mu ( \bar Q ) \leq c \, s ( \ti Q )^{1-p} \max\limits_{(1+\he) \ti Q} u^{p-1} \leq c^2\,  s( \ti Q )^{1-n} \, \mu ( 2 \ti Q )\\
&\bar \mu ( B (0, n )) \approx 1.
\end{split} 
\end{align} 
where $Q\in\La_{M}$  corresponds  to $\ti Q \in  \ti \La_M$. Let 
\[ 
\m{E}:= \left\{ \ti Q \in \ti \Ga \sem \ti \La_{M}\; \mbox{for which there exists}\; \ti Q' \in \ti \La_{M}\;  \mbox{with}\; \ti Q' \subset \ti Q\; \mbox{and}\; c_2 \, s ( \ti Q' ) \leq  s ( \ti Q )\right\}. 
\] 
For $ c_2, c_4 $ as in Lemma \ref{finess} and $ \ti Q \in \m{E} $ we also define
\[ 
O := O ( \ti Q ) = \left\{x \in (1 + \he ) \ti Q\; \mbox{with}\; d(x, \ar \Om ) \geq \frac{ s(\ti Q)}{c_4} \right\}. 
\] 
We note that each point in
\[ 
\bigcup_{ \ti Q \in \m{E} } O ( \ti Q )\; \mbox{lies in at most}\; \hat N\; \mbox{of the}\; \ti Q \in \m{E}
\]
where $ \hat N $ has the same dependence as $ c_4. $ Using this observation and Lemma \ref{new2} it follows for $ n \geq 3 $ that
\begin{align} 
\label{new35}  
\sum_{\ti Q \in \m{E} } \bar \mu ( \bar Q ) \leq \breve{c} \int\limits_{ \Om \cap \left\{x : \nabla \bar u \neq 0 \right\}}  \bar u \rd \nu \leq
2 \breve{c} \int\limits_{ \Om \cap \left\{ x : |\nabla \bar u |  >  \de''  \right\}}  \bar u \rd \nu. 
\end{align}  
provided
$ \de'' > 0 $ is small enough. 

If $ p = 2 = n $ the integral on the right hand side of \eqref{new35} is taken over $ \Om . $ In general $ \breve c $ depends on $ p, n, \al, \be, c_{*}$ but in view of Lemma \ref{new2} we have $ 1 \leq \breve c \leq c ( p - n )^{-1}, $ where $ c $ can be chosen to depend only on $ n, \al, \be, c_* $ when $
p \in [n, n + 1] $ while if $ f = g^p $ then $ \breve c $ can be chosen to depend only on $ n, \al, \be, c_* $ when $ p \in [n, n+1]. $

We now essentially repeat the argument leading to Lemma \ref{logfgradu}. Choose  $ \eta \in (-\infty, \infty ) $ so small that if $ | \xi |  \leq \de''$ then
 $ \log f ( \xi ) \leq \eta. $ Using \eqref{new35} and arguing as \eqref{3.19}-\eqref{3.30}  we obtain for $ n \geq 3 $ and
 $ v'  = \max (\bar v, \eta ) $ that
\begin{align} 
\label{new36} 
\sum_{\ti Q \in \m{E} } \bar \mu ( \bar Q ) \leq 2 \breve c \int\limits_{\Om \cap \left\{ x : | \nabla \bar u | > \de'' \right\} }  \bar u \rd \bar \nu \leq - \int_{\Om}  \sum_{k, j = 1}^{n} f_{\eta_k \eta_j } ( \nabla \bar u ) v'_{x_j} \bar u_{x_k} \rd x \leq  c \breve c \log M.
  \end{align} 
To estimate the left hand side of \eqref{new36}, given $ \ti Q' \in \ti \La_{M}, $  we let
$ \si ( \ti Q' ) $ be the number of cubes $ \ti Q  \in \m{E} $ with
$ \ti Q' \subset \ti Q $ and $ c_2 s ( \ti Q' ) \leq s (\ti Q ). $  From our construction we see for $ \tau $ small enough that
\begin{align} 
\label{new37}  
\si ( \ti Q' )  \geq - c^{-1}  \log ( s ( \ti Q' ) )  
\end{align}  
From \eqref{new36} and \eqref{new37} we get
\begin{align} 
\label{new38} 
- \, \sum_{ \ti Q' \in \ti \La_{M}}  \log (s ( \ti Q'))  \bar \mu ( \bar Q' )  \, \leq \,
 c  \sum_{\ti Q \in \m{E} }  \bar \mu ( \bar Q ) \leq c^2  \breve c   \log M \, 
 \end{align} 
 where $ c \geq 1 $ in \eqref{new36}, \eqref{new37}, and \eqref{new38} has the same dependence as $ c_2 $ in Lemma \ref{finess}.
 From \eqref{new34} and \eqref{new38}  we see that if $ c $ is large enough with  
 \[
 \ti \La_1 := \{ \ti Q \in \ti \La_{M} : s ( \ti Q ) \leq  M^{ - c^3 \breve c} \}\; \mbox{and}\; \La_1 := \{ Q : \ti Q \in \ti \La_1 \}
 \]
 then
\begin{align} \label{new39} 
 \sum_{ Q \in \La_1 }  \bar \mu ( \bar Q ) \leq  (1/2) \ti \mu ( B (0, n ) ). 
\end{align} 
Finally  choosing   $s << \min (M^{ - c^3 \breve c}, \tau ), $  we see that $ \m{B}_{M} \subset  \ti \La_1. $
Let
\[  
F :=  \m{C} \cap \left( \bigcup_{  \ti Q \in \ti \La_{M} \sem \ti \La_1} \ti Q \right) . 
\] 
Then from \eqref{new1}, \eqref{new34}, and \eqref{new39} we deduce for $ c$ having the same dependence as in  \eqref{new36}-\eqref{new39} that
\begin{align} 
\label{new40} 
c^{-1} \leq \bar \mu \left( \bigcup_{Q \in \La_{M} \sem \La_1} \bar Q \right) \leq c \mu ( F) . 
\end{align} 
Moreover, if $ \de'  =  \frac{1}{2 \breve c c^3}, $ where $ c $ is as in the definition of $  \ti \La_1,$ then since $ \ti \La_{M} \sem \ti \La_1 \subset \m{G}_M, $ we have
\begin{align} 
\label{new41}    
\sum_{\ti Q \in \ti \La_{M} \sem \La_1 }  s ( \ti Q )^{n - 1 - \de' }  \leq c M^{- 1/2}  \sum_{ \ti Q \in \ti \La_{M} \sem \La_1 } \mu ( \ti Q ) \leq M^{ - 1/4} \leq \ep 
\end{align} 
provided $ M \geq M_0 $ is large enough. In view of our earlier calculations we conclude  that $ \de' $ has the same dependence as in \eqref{new3}.
 Moreover if $ f = g^p, g $ as in Theorem \ref{alv13.2}, then $ M_0 $ can be chosen independent of $ p $ in $ [n, n + 1].
 $ It follows from \eqref{new40} and \eqref{new41} that \eqref{new3} is true. From our earlier remarks we conclude that Proposition \ref{prop2} holds which finishes proof of Theorem \ref{alv13.2}.
\end{proof}
\section*{Acknowledgment}
The authors would like to thank Matthew Badger reading an earlier version of this manuscript and for his suggestions. The first and second authors were partially supported by NSF DMS-0900291 and by the Institut Mittag-Leffler (Djursholm, Sweden). Both authors would like to thank the staff at the institute for their gracious hospitality. The first author has also been supported in part by ICMAT Severo Ochoa project SEV- 2011-0087. He acknowledges that the research leading to these results has received funding from the European Research Council under the European Union's Seventh Framework Programme (FP7/2007-2013)/ ERC agreement no. 615112 HAPDEGMT.

\def\cprime{$'$} \def\cprime{$'$}
\providecommand{\bysame}{\leavevmode\hbox to3em{\hrulefill}\thinspace}
\providecommand{\MR}{\relax\ifhmode\unskip\space\fi MR }
\providecommand{\MRhref}[2]{%
  \href{http://www.ams.org/mathscinet-getitem?mr=#1}{#2}
}
\providecommand{\href}[2]{#2}

\end{document}